\documentclass[11pt,reqno]{amsart}

\usepackage[T1]{fontenc}
\usepackage{lmodern}
\usepackage{microtype}
\usepackage{amsmath,amssymb,amsfonts,mathtools,mathrsfs}
\usepackage{amsthm}
\usepackage{enumitem}
\usepackage[letterpaper,hmargin=1.25in,vmargin=1in,includeheadfoot]{geometry}
\usepackage{booktabs}
\usepackage{array}
\usepackage{xcolor}
\usepackage{hyperref}
\usepackage[nameinlink,capitalise,noabbrev]{cleveref}

\hypersetup{
  hidelinks,
  pdfauthor={AUTHOR NAME(S)},
  pdftitle={Qin's quasimodularity conjecture for Hilbert schemes of points}
}

\numberwithin{equation}{section}
\allowdisplaybreaks

\newtheorem{theorem}{Theorem}[section]
\newtheorem{proposition}[theorem]{Proposition}
\newtheorem{lemma}[theorem]{Lemma}
\newtheorem{corollary}[theorem]{Corollary}
\newtheorem{conjecture}[theorem]{Conjecture}
\newtheorem{definition}[theorem]{Definition}
\newtheorem{remark}[theorem]{Remark}

\newtheorem*{definition*}{Definition}

\crefname{proposition}{proposition}{propositions}
\Crefname{proposition}{Proposition}{Propositions}
\crefname{lemma}{lemma}{lemmas}
\Crefname{lemma}{Lemma}{Lemmas}
\crefname{corollary}{corollary}{corollaries}
\Crefname{corollary}{Corollary}{Corollaries}
\crefname{conjecture}{conjecture}{conjectures}
\Crefname{conjecture}{Conjecture}{Conjectures}
\crefname{definition}{definition}{definitions}
\Crefname{definition}{Definition}{Definitions}
\crefname{remark}{remark}{remarks}
\Crefname{remark}{Remark}{Remarks}
\crefname{example}{example}{examples}
\Crefname{example}{Example}{Examples}

\crefname{enumi}{step}{steps}
\Crefname{enumi}{Step}{Steps}

\newlist{correlationsteps}{enumerate}{1}
\setlist[correlationsteps,1]{label=\textup{Step F\arabic*.},ref=F\arabic*,leftmargin=55pt}
\crefname{correlationstepsi}{step}{steps}
\Crefname{correlationstepsi}{Step}{Steps}
\newlist{tautologicalsteps}{enumerate}{1}
\setlist[tautologicalsteps,1]{label=\textup{Step T\arabic*.},ref=T\arabic*,leftmargin=55pt}
\crefname{tautologicalstepsi}{step}{steps}
\Crefname{tautologicalstepsi}{Step}{Steps}

\newcommand{\HH}{\mathbb H_X}
\newcommand{\Q}{\mathbb Q}
\newcommand{\C}{\mathbb C}
\newcommand{\Z}{\mathbb Z}
\newcommand{\id}{\operatorname{Id}}
\newcommand{\ch}{\operatorname{ch}}
\newcommand{\td}{\operatorname{td}}
\newcommand{\Tr}{\operatorname{Tr}}
\newcommand{\Str}{\operatorname{Str}}
\newcommand{\CT}{\operatorname{CT}}
\newcommand{\QM}{\mathrm{QM}}
\newcommand{\Fil}{\operatorname{Fil}}

\newcommand{\eps}{\varepsilon}
\newcommand{\la}{\langle}
\newcommand{\ra}{\rangle}
\newcommand{\Nak}{\mathfrak a}
\newcommand{\GG}{\mathfrak G}
\newcommand{\nn}{\mathfrak n}
\newcommand{\WW}{\mathbf W}
\newcommand{\Afield}{\mathbf a}
\newcommand{\Vcur}{\mathcal V}
\newcommand{\Scur}{\mathcal S}
\newcommand{\Ccur}{\mathcal C}
\newcommand{\QEll}{\mathcal Q}
\newcommand{\zhat}{\widehat Z}

\newcommand{\1}{1_X}
\newcommand{\eX}{e_X}
\newcommand{\KX}{K_X}
\newcommand{\qpoch}{(q;q)_\infty}
\newcommand{\Dq}{q\frac{\mathrm d}{\mathrm d q}}
\newcommand{\OZ}[1]{\mathbf Z(#1)}

\title[Qin's quasimodularity conjecture]{Qin's quasimodularity conjecture\\for Hilbert schemes of points}
\author{Victor Alekseev, Avik Chakravarty, Daebeom Choi and Shengjing Xu}
\date{August 21, 2026}
\subjclass[2020]{Primary 14C05; Secondary 11F11, 14N10, 17B69}
\keywords{Hilbert scheme of points, tautological bundle, quasimodular form, Heisenberg algebra, vertex operator, quasi-elliptic function, Wick expansion}

\begin{document}

\begin{abstract}
Let \(X\) be a smooth projective complex surface with numerically trivial canonical class. With the help of GPT-5.6 Sol, we prove Qin's conjecture that the reduced generating series of intersection numbers of Chern characters of tautological bundles against the total Chern class of \(X^{[n]}\) is a quasimodular form with the predicted mixed-weight bound. The main ingredient is a Wick's theorem-type formula for computing traces of normally ordered products of Nakajima operators on \(\bigoplus_n H^\ast(X^{[n]})\). Following the argument of Li--Qin--Wang, the computation of the reduced generating series is reduced to computing the constant term of the supertrace of a product of certain operator-valued currents and the Carlsson--Okounkov operator. Our trace formula shows that this supertrace can be expressed in terms of two quasi--elliptic functions \(\widehat{Z}, P\) and a quasimodular function \(T\), whose constant terms are quasimodular forms by the theorem of Goujard--M\"oller. Finally, we give an explicit algorithm for computing the general reduced generating series.
\end{abstract}

\maketitle

\section{Introduction}\label{sec:introduction}

Let $X$ be a smooth projective complex surface and let $X^{[n]}$ denote the Hilbert scheme of zero-dimensional subschemes of length $n$.  For a line bundle $L$ on $X$, the universal family determines a rank-$n$ tautological bundle $L^{[n]}$ on $X^{[n]}$.  Following Okounkov~\cite{Okounkov2014}, for line bundles $L_1,\ldots,L_N$ and nonnegative integers $k_1,\ldots,k_N$, consider
\begin{equation}\label{eq:intro-series}
\left\langle
  \ch_{k_1}^{L_1}\cdots\ch_{k_N}^{L_N}
\right\rangle
=
\sum_{n\geq 0}q^n
\int_{X^{[n]}}
\prod_{i=1}^N \ch_{k_i}\!\left(L_i^{[n]}\right)
\,c\!\left(T_{X^{[n]}}\right),
\end{equation}
where $c(T_{X^{[n]}})$ is the total Chern class.  Its reduced form is
\begin{equation}\label{eq:intro-reduced}
\left\langle
  \ch_{k_1}^{L_1}\cdots\ch_{k_N}^{L_N}
\right\rangle'
=
\qpoch^{\chi(X)}
\left\langle
  \ch_{k_1}^{L_1}\cdots\ch_{k_N}^{L_N}
\right\rangle .
\end{equation}
Here $(q;q)_\infty=\prod_{m\geq1}(1-q^m)$ and
$\chi(X)=\sum_j(-1)^j\dim H^j(X,\Q)$ is the topological Euler
characteristic.  The normalization is natural because G\"ottsche's formula~\cite{Gottsche1990}, gives
\begin{equation}\label{eq:gottsche-product}
\sum_{n\geq0}q^n\int_{X^{[n]}}c(T_{X^{[n]}})
=\qpoch^{-\chi(X)}.
\end{equation}

Write
\[
\QM=\Q[E_2,E_4,E_6]
\]
for the ring of level-one quasimodular forms, where $E_2,E_4,E_6$
are the Eisenstein series normalized to have constant coefficient $1$,
with weights $2,4,6$.  Since the series arising here need not be homogeneous, we use the mixed-weight filtration
\[
\Fil_{\leq W}\QM
=\bigoplus_{0\leq w\leq W}\QM_w.
\]
Here $\QM_w$ denotes the subspace of homogeneous weight $w$.
When $W$ is odd, this is the same as $\Fil_{\leq W-1}\QM$.

The following quasimodularity conjecture, formulated by Qin for surfaces with numerically trivial canonical class, is closely related to Okounkov's multiple-$q$-zeta-value conjecture; see~\cite{Qin2018,AlhwaimelQin2024,Alhwaimel2025}.

\begin{conjecture}\label[conjecture]{conj:qin}
Assume that $K_X$ is numerically trivial.  Then
\begin{equation}\label{eq:main-theorem}
\left\langle
  \ch_{k_1}^{L_1}\cdots\ch_{k_N}^{L_N}
\right\rangle'
\in
\Fil_{\leq\sum_{i=1}^N(k_i+2)}\QM.
\end{equation}
\end{conjecture}

The main result of this paper establishes the conjecture in full generality.

\begin{theorem}\label{thm:main}
\Cref{conj:qin} holds.
\end{theorem}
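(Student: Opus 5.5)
The plan follows the strategy outlined in the abstract, in three stages.

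\textbf{Operator formulation.} First I rewrite the generating series in terms of operators on Fock space. Let $\HH=\bigoplus_n H^\ast(X^{[n]})$ carry the Nakajima operators $\Nak_m(\alpha)$. I will use two known results.

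\begin{enumerate}[label=(\roman*)]
\item The Li--Qin--Wang description of the operator of cup product with $\ch_k(L^{[n]})$. It is the zero mode of a current built from normally ordered products of Nakajima operators, twisted by $\ch(L)$ and $\td$-type classes.
\item The Carlsson--Okounkov vertex operator, which encodes $c(T_{X^{[n]}})$.
\end{enumerate}

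Combining these, $\langle \ch_{k_1}^{L_1}\cdots\ch_{k_N}^{L_N}\rangle$ becomes the constant term in auxiliary variables $z_1,\dots,z_N$ of a supertrace
\[
\Str_{\HH}\, q^{\nn}\prod_i \Vcur_{k_i}^{L_i}(z_i)\,\Ccur .
\]
Here $\Vcur_{k_i}^{L_i}(z_i)$ are the operator-valued currents. Because $K_X$ is numerically trivial, the Carlsson--Okounkov operator simplifies. It becomes an exponential of $\Nak_{\pm m}(\eX)$-type terms and terms involving $\KX$, and these last pair to zero against everything that is numerically relevant. The trace of $q^{\nn}$ alone contributes $\qpoch^{-\chi(X)}$, which cancels the normalizing factor.

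\textbf{Wick-type trace formula.} Next I prove a Wick-type formula. For a product of normally ordered exponentials and monomials in Heisenberg fields, the normalized supertrace equals a sum over pairings of two-point functions. I diagonalize each Heisenberg mode $\Nak_{\pm m}$ in a basis of $H^\ast(X)$. The trace then factors mode by mode into geometric series in $q^m$. Summing over $m$ produces the propagator $P(z_i/z_j)$, a Weierstrass-$\wp$-type quasi-elliptic function. Contractions between a current and the Carlsson--Okounkov exponential produce $\zhat$, which is essentially a normalized Jacobi theta quotient. Self-contractions and zero-mode terms produce the quasimodular function $T$, built from $E_2$-like series. Each contraction carries a cohomological pairing $\int_X \alpha\beta$, and each of these is a finite rational number. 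The supertrace signs are handled by the parity of cohomology degrees.

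\textbf{Constant terms and weights.} Finally, I take the constant term in $z_1,\dots,z_N$. The integrand is a polynomial in $T$ and in the derivatives of $P$ and $\zhat$ evaluated at ratios of the $z_i$. The Goujard--M\"oller theorem states that constant terms of such products of quasi-elliptic functions are quasimodular. The weight bound comes from assigning weights as follows:
\begin{itemize}
\item a derivative of order $r$ of $P$ has weight $r+2$;
\item $\zhat$ and its derivatives have weights bounded by their order;
\item each current $\Vcur_{k}$ contributes at most total weight $k+2$. The $k$ comes from the $z$-degree, counting Nakajima operators and derivatives, and the $2$ comes from the cohomological degree needed to integrate against $X$.
\end{itemize}
Constant-term extraction preserves the filtration, so the result lies in $\Fil_{\leq\sum(k_i+2)}\QM$.

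\textbf{Main obstacle.} I expect the hardest step to be the Wick formula in the presence of the Carlsson--Okounkov operator. The difficulty is controlling the mixed weights precisely, because the LQW currents are not homogeneous: lower-order terms involve $\td(X)$ and $\KX$. My approach is to introduce a weight grading on the operators, verify that each contraction type respects it, and invoke numerical triviality of $\KX$ to kill the terms that would exceed the bound.
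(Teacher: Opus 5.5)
Your architecture is the paper's: LQW zero modes of normally ordered currents, the Carlsson--Okounkov operator for $c(T_{X^{[n]}})$, a Wick expansion whose only kernels are $\zhat$, $P$ and $T_s$, and then Goujard--M\"oller. The gap is in the weight bound, which is the part of the conjecture beyond bare quasimodularity. Your kernel weights are wrong. $D^d\zhat$ has mixed weight at most $1+d$, so it is not bounded by the derivative order; already $\zhat=Z-\frac12$ has a weight-one component. Also, nothing in the count comes from ``the cohomological degree needed to integrate against $X$''. The figure $k+2$ per current that you assert is correct, but it does not follow from what you wrote.

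What works is a per-field count: a Heisenberg field carrying $D_x^{d}$ contributes exactly $1+d$. A singleton gives $D^{d}\zhat$ (weight $\leq 1+d$), a pair between different currents gives $D^{d_a+d_b}P$ (weight $2+d_a+d_b$), and a normally ordered self-contraction gives $T_{d_a+d_b+2}$ (weight $\leq d_a+d_b+2$). You then need the exact shape of LQW once the $\KX$-terms are gone, namely $\GG_k(\alpha)=-[x^0]\Vcur_{k+2}(\alpha;x)+\frac1{24}[x^0]\bigl(\Scur_k(\eX\alpha;x)-2\Vcur_k(\eX\alpha;x)\bigr)$. Here the coefficient $s(\lambda)=\sum_h m_h^2$ must be realized as $D_x^2$ acting on one of the $k$ fields. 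The three terms then contribute $k+2$, $k+2$ and $k$, and the GRR branches $G_{k-1}(c_1(L))$ and $G_{k-2}(c_1(L)^2/2)$ only lower the count. The evenness of the derivative orders is also needed: it makes every loop index $s$ even, so that $T_s=\frac{B_s}{2s}(1-E_s)$ is quasimodular. For odd $s$ this fails.

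Three smaller corrections. First, with $\KX=0$ in $H^2(X,\Q)$ the vertex operator is $\WW_y=\Gamma_-(\1,y)\Gamma_+(-\1,y)$, built from $\Nak_{\pm n}(\1)$ rather than $\Nak_{\pm m}(\eX)$. The pairing with $\1$ is what makes each singleton carry $\int_X\alpha_a$. The weight-one equivariant twist $\mathcal O_X\otimes\C_1$, specialized at $t=1$, is what yields $c(T_{X^{[n]}})$ rather than $c_{2n}$. Second, numerical triviality of $\KX$ is not a weight-control device. It removes the $\KX$ and $\KX^2$ terms of LQW, and the $\KX$ in $\WW_y$, outright. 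Third, Goujard--M\"oller computes the constant term in all variables, including $y=x_0$, in a fixed nested chamber. You must therefore note that every monomial has total exponent zero, which identifies that constant term with your constant term in $x_1,\ldots,x_N$.
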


The relation between Hilbert-scheme integrals and modular objects has several antecedents.  The Heisenberg action of Grojnowski and Nakajima~\cite{Grojnowski1996,Nakajima1997} organizes the cohomology of all Hilbert schemes into a Fock representation.  Li, Qin, and Wang described the cup-product operators associated with universal Chern characters in this representation~\cite{LiQinWang2002a,LiQinWang2002b}.  Carlsson and Okounkov constructed the Ext vertex operator~\cite{CarlssonOkounkov2012}, while Carlsson obtained vertex-operator quasimodularity results for Chern numbers~\cite{Carlsson2012}.  Qin and Yu proved Okounkov's assertion modulo lower-weight terms and treated the abelian-surface case~\cite{QinYu2018}.  Shen and Qin established further quasimodularity results~\cite{ShenQin2020}.  More recently, Alhwaimel and Qin treated the series $\langle\ch_2^L\rangle'$~\cite{AlhwaimelQin2024}, and Alhwaimel treated the series $\langle\ch_1^{L_1}\ch_1^{L_2}\rangle'$~\cite{Alhwaimel2025}.

The proof begins with the Grothendieck--Riemann--Roch theorem (GRR).  For $\alpha\in H^*(X,\Q)$, let $G_j(\alpha,n)$ denote the component of degree $\deg(\alpha)+2j$ of the universal class $G(\alpha,n)$ defined in \eqref{eq:G-total}.  Then \Cref{prop:GRR-decomposition} gives
\begin{equation}\label{eq:intro-grr}
\ch_k(L^{[n]})
=G_k(\1,n)+G_{k-1}(c_1(L),n)
+G_{k-2}\!\left(\frac{c_1(L)^2}{2},n\right),
\end{equation}
with negative-index terms omitted.  Expanding every factor in \eqref{eq:intro-series} by \eqref{eq:intro-grr} reduces Qin's series to a finite linear combination of generating series whose integrands contain products of the classes $G_j(\alpha,n)$.  In the mixed-weight estimate of \Cref{thm:G-insertions}, a factor $G_j(\alpha,n)$ contributes $j+2$ to the total bound $\sum_i(j_i+2)$.  Thus the three terms on the right-hand side of \eqref{eq:intro-grr} contribute $k+2$, $k+1$, and $k$, respectively, to that total bound.

For $k\geq0$ and $\alpha\in H^*(X,\Q)$, let $\GG_k(\alpha)$ be the operator acting on $H^*(X^{[n]})$ by cup product with $G_k(\alpha,n)$.  For any ordered collections $j_1,\ldots,j_N$ and $\alpha_1,\ldots,\alpha_N$, \Cref{thm:CO-trace} identifies the series
\[
\sum_{n\geq0}q^n\int_{X^{[n]}}
\left(\prod_{i=1}^NG_{j_i}(\alpha_i,n)\right)c(T_{X^{[n]}})
\]
with
\[
\Str_{\HH}\!\left(q^{\nn}\WW_y
\prod_{i=1}^N\GG_{j_i}(\alpha_i)\right).
\]
Dividing by the trace without cup-product operators, computed in \eqref{eq:no-insertion-trace}, gives the normalized trace functional \eqref{eq:Omega-def}. To evaluate this trace, we use the Li--Qin--Wang formula \cite[Theorem~4.6]{LiQinWang2002b}, recalled in \Cref{thm:LQW}, to expand each \(\GG_{j_i}(\alpha_i)\) as a sum of products of Nakajima operators. By \Cref{lem:K-rational-zero}, the surviving terms can then be written, using \Cref{prop:current-reduction}, as constant terms of the operator-valued series \(\Vcur_r\) and \(\Scur_r\) in \eqref{eq:current-reduction}, whose coefficients are normally ordered products of Nakajima operators.

The core of the proof is therefore to compute the supertraces of normally ordered products of Nakajima operators. This will in turn yield the supertraces of products of the operator-valued currents \(\Vcur_r\) and \(\Scur_r\). Recall that the Nakajima operators form a Heisenberg algebra, the operator algebra of a free quantum field theory. We can therefore follow the methodology of Wick's theorem in QFT: first compute the supertraces of products of Nakajima operators in \Cref{lem:shifted-trace,prop:graded-wick-recursion}, and then deduce the supertraces of normally ordered products by induction in \Cref{lem:normal-ordering-subtraction}. This leads to \Cref{thm:wick-current}, which gives the desired formula for supertraces of products of operator-valued currents.

Altogether, as proved in \Cref{subsec:wick-graphs}, the supertrace is expressed as a sum parametrized by Wick graphs,
\[
\sum_\Gamma c_\Gamma\,
\mathcal I_\Gamma\,\mathscr F_\Gamma,
\]
where \(c_\Gamma\in\Q\) comes from the current reduction, \(\mathcal I_\Gamma\) is the \(q\)-independent cohomological contribution defined in \Cref{def:cohomological-contribution}, and \(\mathscr F_\Gamma\) is the associated scalar function introduced in \Cref{def:wick-graph}. The key point is that the only scalar factors appearing in \(\mathscr F_\Gamma\) are derivatives of the normalized zeta function \(\zhat\) and its derivative \(P\), together with the divisor sums \(T_s\), as displayed in \eqref{eq:wick-current-formula}. Hence, \(\mathscr F_\Gamma\) is a quasi-elliptic function of the type considered in \Cref{sec:quasielliptic}. The constant-term theorem of Goujard--M\"oller \cite[Theorem~5.8]{GoujardMoller2020}, recalled in \Cref{thm:GM}, then shows that its chamber constant term is quasimodular with the required weight bound.

We emphasize that the use of Wick's theorem in the computation of reduced generating functions has precedent in the literature. Li–Qin–Wang \cite[Section~6]{LiQinWang2002b} explicitly used Wick's theorem to compute OPEs and establish the \(W\)-algebra commutation relations. More directly related to our calculations are \cite[Section~4]{QinYu2018} and \cite[Lemmas~4.5–4.7]{Alhwaimel2025}, whose trace-reduction methods are closely related to those used in our \Cref{lem:shifted-trace}, \Cref{prop:graded-wick-recursion} and \Cref{lem:normal-ordering-subtraction}. In particular, \cite[Theorem~4.10]{QinYu2018} expresses the generating series as constant terms of expressions involving multiple \(q\)-zeta values with auxiliary variables. Our contribution is to organize the full trace expansion in terms of Wick graphs and to carry the resulting scalar-kernel computations through to quasi-elliptic functions. This places the reduced generating function in a form to which the Goujard–M\"oller constant-term theorem applies, yielding the desired quasimodularity.

All in all, we prove the following more general theorem, from which \Cref{thm:main} essentially follows by the GRR decomposition \eqref{eq:intro-grr}.

\begin{theorem}\label{thm:G-insertions}
Assume that $K_X$ is numerically trivial.  Let $\alpha_1,\ldots,\alpha_N\in H^*(X,\Q)$ and $j_1,\ldots,j_N\geq0$.  Then
\begin{equation}\label{eq:G-insertion-result}
\qpoch^{\chi(X)}
\sum_{n\geq0}q^n
\int_{X^{[n]}}
\left(\prod_{i=1}^N G_{j_i}(\alpha_i,n)\right)
c(T_{X^{[n]}})
\in
\Fil_{\leq\sum_{i=1}^N(j_i+2)}\QM.
\end{equation}
\end{theorem}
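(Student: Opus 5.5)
We follow the route sketched in the introduction. First we use \Cref{thm:CO-trace} to rewrite the generating series in \eqref{eq:G-insertion-result}, before normalization, as the supertrace $\Str_{\HH}\bigl(q^{\nn}\WW_y\prod_i\GG_{j_i}(\alpha_i)\bigr)$. Dividing by the insertion-free trace \eqref{eq:no-insertion-trace}, which equals $\qpoch^{-\chi(X)}$ by G\"ottsche's formula \eqref{eq:gottsche-product}, identifies the left-hand side of \eqref{eq:G-insertion-result} with the normalized functional \eqref{eq:Omega-def} applied to $\prod_i\GG_{j_i}(\alpha_i)$. Next we expand each $\GG_{j_i}(\alpha_i)$ by the Li--Qin--Wang formula \Cref{thm:LQW} as a finite sum of normally ordered monomials in Nakajima operators, with coefficients involving $\alpha_i$, $K_X$ and $e_X$. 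Since $K_X$ is numerically trivial, \Cref{lem:K-rational-zero} kills the terms carrying $K_X$ after integration. \Cref{prop:current-reduction} then repackages the surviving terms as constant terms in auxiliary variables $z_1,\dots,z_N$ of products of the currents $\Vcur_r,\Scur_r$ from \eqref{eq:current-reduction}. While doing this we track a grading: a monomial coming from $\GG_j(\alpha)$ with $r$ Nakajima factors and $s$ extra derivative or $e_X$ insertions should carry total weight at most $j+2$.

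The second step evaluates the normalized supertrace of the product of currents with the Carlsson--Okounkov operator $\WW_y$ inserted. We apply \Cref{thm:wick-current}, built from \Cref{lem:shifted-trace}, \Cref{prop:graded-wick-recursion} and \Cref{lem:normal-ordering-subtraction}. This writes the trace as $\sum_\Gamma c_\Gamma\,\mathcal I_\Gamma\,\mathscr F_\Gamma$ over Wick graphs $\Gamma$. Here $\mathcal I_\Gamma\in\Q$ is a $q$-independent integral over products of $X$ (\Cref{def:cohomological-contribution}), and $\mathscr F_\Gamma$ is a product of $z$-derivatives of $\zhat(z_a-z_b)$ and $P(z_a-z_b)$ together with the divisor sums $T_s$, as in \eqref{eq:wick-current-formula}. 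The factor $\qpoch^{\chi(X)}$ is exactly what cancels the vacuum contribution, so no stray $\eta$-powers remain. Each $\mathscr F_\Gamma$ is therefore a quasi-elliptic function in the sense of \Cref{sec:quasielliptic}, and the series in question is a finite $\Q$-linear combination of chamber constant terms $\CT_z\mathscr F_\Gamma$.

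The final step applies \Cref{thm:GM}, which says that the chamber constant term of a quasi-elliptic function of weight $w$ is a quasimodular form of weight at most $w$ (mixed, after accounting for the pieces). This reduces the proof to a weight count. We assign weight $1$ to $\zhat$, weight $2$ to $P$, weight $k+1$ to the $k$-th $z$-derivative of $\zhat$ (each $z$-derivative adds $1$), and weight $s$ to $T_s$. We then check that for each Wick graph the total weight is at most $\sum_i(j_i+2)$. Concretely, we would show that an insertion from $\GG_{j}(\alpha)$ consisting of $r$ Nakajima operators and $\ell$ derivative/correction factors contributes half-edges and derivatives totalling at most $j+2$. Each Wick edge between two vertices then contributes at most the sum of its two half-weights, and self-contractions produce $T_s$ with matching weight. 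Odd total weight is absorbed since $\Fil_{\le W}=\Fil_{\le W-1}$ for odd $W$.

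The main obstacle is this weight bookkeeping. The Li--Qin--Wang expansion contains lower-order correction terms involving $e_X$ and $K_X$, and the Carlsson--Okounkov operator $\WW_y$ introduces additional contractions. Both could a priori raise weight. I would handle it by defining a grading on monomials $\Nak_{-n_1}\cdots\Nak_{-n_r}(\gamma)$ by $\sum_i n_i$ plus the cohomological degree shift. I would then verify, using the degree constraint $\deg G_j(\alpha,n)=\deg\alpha+2j$, that this grading is bounded by $j+2$ and that \Cref{thm:wick-current} converts the grading into quasi-elliptic weight without increase.
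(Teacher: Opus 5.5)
Your proposal is essentially the paper's proof: \Cref{prop:reduced-trace}, then the current reduction \eqref{eq:current-reduction}, then the Wick/graph expansion of \Cref{thm:wick-current} and \Cref{cor:basis-free-graphs}, then \Cref{thm:GM} with exactly the weight assignment of \Cref{prop:graph-weight}; the paper additionally justifies the trace/constant-term interchange and the passage to the full constant term in $x_0$ in \Cref{lem:trace-CT-interchange}, which you should cite. The ``main obstacle'' of your last paragraph is not a real one, and a grading by the mode sum $\sum_i n_i$ is the wrong quantity: the bound is read off directly from \eqref{eq:current-reduction} as in \Cref{cor:current-weight}, where $\Vcur_{j+2}$ gives $j+2$, a summand of $\Scur_j$ gives $(1+2)+(j-1)=j+2$, and $\Vcur_j$ gives $j$, while $\WW_y$ only produces the singleton factors $D^d\zhat$, which carry exactly their half-edge weight $1+d$.
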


The proof of \Cref{thm:G-insertions} is completed in \Cref{sec:quasielliptic}.  \Cref{sec:main-proof} then expands each tautological Chern character by \eqref{eq:intro-grr} and gives the detailed deduction of \Cref{thm:main}.

The paper is organized as follows.  \Cref{sec:operators} develops the current reduction, \Cref{sec:wick} proves the graded Wick theorem, and \Cref{sec:quasielliptic} applies the Goujard--M\"oller theorem.  \Cref{sec:main-proof} passes from universal classes to tautological Chern characters.  \Cref{sec:algorithm} records the resulting finite algorithm, \Cref{sec:checks} applies it to low-degree examples.

\section{Hilbert-scheme operators and current reduction}\label{sec:operators}

The goal of this section is to realize the universal Chern character operators \(\GG_k(\alpha)\) as the constant terms of certain operator-valued currents. This is achieved in \Cref{prop:current-reduction}, which reformulates results of \cite{LiQinWang2002b} in the language of current algebras. Thus, this section may be viewed as a reinterpretation of some results of \cite{LiQinWang2002b} in our framework.

\subsection{The super-Heisenberg Fock space}\label{subsec:super-heisenberg}
Here, we review some basic notions and notation. Let $X$ be a smooth projective complex surface and let $X^{[n]}$ denote the Hilbert scheme of zero-dimensional schemes of length $n$. Consider the super-vector space
\[
H=H^*(X,\Q)=H_{\bar0}\oplus H_{\bar1}
\]
with the graded symmetric Poincar\'{e} pairing
\[
(\alpha,\beta)=\int_X\alpha\cup\beta;\qquad
(\alpha,\beta)=(-1)^{|\alpha||\beta|}(\beta,\alpha).
\]
Integration is extended by zero on $H^j(X,\Q)$ for $j\ne4$.
Here $H_{\bar0}=H^{\mathrm{even}}(X,\Q)$ and
$H_{\bar1}=H^{\mathrm{odd}}(X,\Q)$.
For a homogeneous class $\alpha$, write $\deg(\alpha)$ for its cohomological degree and $|\alpha|=\deg(\alpha)\bmod 2$ for its parity. Set
\[
\HH=\bigoplus_{n\geq0}H^*(X^{[n]},\Q).
\]
For an endomorphism $A$ of $\HH$, we use the categorical supertrace
\[
\Str_{\HH}(A)=\sum_{n,j}(-1)^j
\Tr\!\left(A\big|_{H^j(X^{[n]})}\right),
\]
where only the diagonal block in the number grading contributes.

For $m\in\Z\setminus\{0\}$ and $\alpha\in H$, let $\Nak_m(\alpha)$ be the Nakajima operator, with the sign convention as in~\cite{LiQinWang2002b,Alhwaimel2025}.  For homogeneous $\alpha$, the operator $\Nak_m(\alpha)$ has parity $|\alpha|$.  Negative modes are creation operators and positive modes are annihilation operators.  The commutation relation of ~\cite[Theorem~3.1(i)]{LiQinWang2002b} is recorded in our convention as \eqref{eq:heisenberg}:
\begin{equation}\label{eq:heisenberg}
[\Nak_m(\alpha),\Nak_n(\beta)]_{\mathrm s}
=-m\,\delta_{m,-n}(\alpha,\beta)\id,
\end{equation}
where $[A,B]_{\mathrm s}=AB-(-1)^{|A||B|}BA$, and
\begin{equation}\label{eq:number-commutator}
[\nn,\Nak_m(\alpha)]=-m\Nak_m(\alpha).
\end{equation}
Here $\nn$ is the number-of-points operator, so that $\nn|_{H^*(X^{[n]})}=n\operatorname{Id}$; it is denoted by $\mathfrak{L}_0(1)$ in ~\cite[Theorem~3.1]{LiQinWang2002b}. The vacuum vector is $|0\ra=1\in H^0(X^{[0]})$.

Let $\tau_r:X\to X^r$, $x\mapsto (x,\ldots,x)$, be the diagonal embedding.  If
\[
\tau_{r*}\alpha=\sum_\nu \alpha_{\nu,1}\otimes\cdots\otimes\alpha_{\nu,r}
\]
is a homogeneous K\"unneth decomposition, write
\begin{equation}\label{eq:diagonal-operator}
(\Nak_{m_1}\cdots\Nak_{m_r})(\alpha)
=
\sum_\nu
\Nak_{m_1}(\alpha_{\nu,1})\cdots
\Nak_{m_r}(\alpha_{\nu,r}).
\end{equation}
This is independent of the chosen decomposition.  All tensor permutations are interpreted in the symmetric monoidal category of super-vector spaces.

A generalized partition is a finite multiset of nonzero integers,
\[
\lambda=(\cdots(-2)^{n_{-2}}(-1)^{n_{-1}}1^{n_1}2^{n_2}\cdots).
\]
We use
\[
\ell(\lambda)=\sum_i n_i,
\quad
|\lambda|=\sum_i i n_i,
\quad
s(\lambda)=\sum_i i^2 n_i,
\quad
\lambda!=\prod_i n_i!.
\]
For homogeneous modes $A_i=\Nak_{m_i}(\beta_i)$, define the \emph{super normal ordering} by
\begin{equation}\label{eq:super-normal-ordering}
:A_1\cdots A_r:
=\varepsilon(\sigma;\boldsymbol\beta)
A_{\sigma(1)}\cdots A_{\sigma(r)},
\qquad
\varepsilon(\sigma;\boldsymbol\beta)
=\prod_{\substack{i<j\\
\sigma^{-1}(i)>\sigma^{-1}(j)}}
(-1)^{|\beta_i||\beta_j|}.
\end{equation}
Here $\sigma$ stably moves the negative modes to the left of the positive modes and $\varepsilon$ is the Koszul sign; this is the convention of ~\cite[Section~5]{LiQinWang2002b}.
We write simply \emph{normal ordering} for this operation.  A \emph{normally ordered block} is a product enclosed by a single pair of these colons.

Choose any ordering $(\lambda_1,\ldots,\lambda_r)$ of the multiset $\lambda$, where $r=\ell(\lambda)$.  For a homogeneous K\"unneth decomposition
\(
\tau_{r*}\alpha=\sum_\nu
\alpha_{\nu,1}\otimes\cdots\otimes\alpha_{\nu,r}
\), define
\[
\Nak_\lambda(\alpha)
:=
\sum_\nu
:\Nak_{\lambda_1}(\alpha_{\nu,1})\cdots
 \Nak_{\lambda_r}(\alpha_{\nu,r}): .
\]
For inhomogeneous $\alpha$, extend the definition linearly.

\subsection{Universal Chern-character operators}

Let
\[
\mathcal Z_n=\{(\xi,x)\in X^{[n]}\times X \mid x\in \operatorname{Supp}(\xi)\}\subset X^{[n]}\times X
\]
be the universal subscheme, with projections $p_1$ and $p_2$.  For $\alpha\in H$, define
\begin{equation}\label{eq:G-total}
G(\alpha,n)
=p_{1*}\!\left(
  \ch(\mathcal O_{\mathcal Z_n})\cdot\;
  p_2^*\alpha\;\cdot p_2^*\td(X)
\right).
\end{equation}
If $\alpha$ is homogeneous, let $G_k(\alpha,n)$ be the component of cohomological degree $\deg(\alpha)+2k$.  Let $\GG_k(\alpha)$ act on $H^*(X^{[n]})$ by cup product with $G_k(\alpha,n)$.

The following structure theorem is due to Li, Qin, and Wang~\cite[Theorem~4.6]{LiQinWang2002b}; the form used here is recorded in~\cite[Theorem~3.3]{Alhwaimel2025}.

\begin{theorem}\label{thm:LQW}
For $k\geq0$ and $\alpha\in H$,
\begin{align}
\GG_k(\alpha)
={}&-
\sum_{\substack{\ell(\lambda)=k+2\\|\lambda|=0}}
\frac{\Nak_\lambda(\alpha)}{\lambda!}
+
\sum_{\substack{\ell(\lambda)=k\\|\lambda|=0}}
\frac{s(\lambda)-2}{24\lambda!}
\Nak_\lambda(\eX\alpha) \notag\\
&+
\sum_{\substack{\ell(\lambda)=k+1\\|\lambda|=0}}
\frac{g_{1,\lambda}}{\lambda!}
\Nak_\lambda(\KX\alpha)
+
\sum_{\substack{\ell(\lambda)=k\\|\lambda|=0}}
\frac{g_{2,\lambda}}{\lambda!}
\Nak_\lambda(\KX^2\alpha),
\label{eq:LQW}
\end{align}
where $\eX=c_2(T_X)$ and the rational numbers $g_{1,\lambda}$ and $g_{2,\lambda}$ are independent of $X$ and $\alpha$.  In particular,
\begin{equation}\label{eq:G0-explicit}
\GG_0(\alpha)=-\sum_{m>0}(\Nak_{-m}\Nak_m)(\alpha).
\end{equation}
\end{theorem}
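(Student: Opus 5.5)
This statement is quoted from the literature, so I will not re-derive it. My plan is to verify that the cited result, read in the conventions fixed in \Cref{subsec:super-heisenberg}, is literally \eqref{eq:LQW}, and then to specialize to $k=0$.

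\textbf{Step 1: transport \cite[Theorem~4.6]{LiQinWang2002b} to our conventions.} Li--Qin--Wang prove this expansion for the operators $\GG_k(\alpha)$ of cup product with the components of \eqref{eq:G-total}. They use the same sign convention for $\Nak_m(\alpha)$, the same commutator \eqref{eq:heisenberg}, and the same super normal ordering \eqref{eq:super-normal-ordering}. The form written here is the one recorded in \cite[Theorem~3.3]{Alhwaimel2025}, so three things need checking.
\begin{itemize}
\item The definition of $\Nak_\lambda(\alpha)$ via the K\"unneth decomposition of $\tau_{r*}\alpha$ does not depend on the ordering of $\lambda$. Reordering inside a normally ordered block changes only Koszul signs, and these are cancelled by the matching super-permutation of the tensor factors of $\tau_{r*}\alpha$, which is graded symmetric.
\item The normalization $1/\lambda!$ agrees with the one in the cited statement.
\item The coefficients $g_{1,\lambda}$ and $g_{2,\lambda}$ are universal. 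This is part of the cited theorem: they come from the universal expression of $\td(X)$ in terms of $\KX$ and $\eX$ together with the universal structure of $\ch(\mathcal O_{\mathcal Z_n})$.
\end{itemize}
None of this requires new input. It is a dictionary between the two sets of conventions, and the only real care needed is with the Koszul signs for odd classes.

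\textbf{Step 2: the case $k=0$.} I go through the four sums in \eqref{eq:LQW} one at a time.
\begin{itemize}
\item In the third sum, $\ell(\lambda)=1$ and $|\lambda|=0$ cannot both hold, because $\lambda$ would be a single nonzero integer. So this sum is empty.
\item In the second and fourth sums, $\ell(\lambda)=0$, so $\lambda$ is the empty partition. Following the convention of \cite{LiQinWang2002b}, an operator $\Nak_\lambda$ with $\ell(\lambda)=0$ contributes zero.
\item In the first sum, $\ell(\lambda)=2$ and $|\lambda|=0$ force $\lambda=(-m,m)$ with $m>0$. Then $\lambda!=1$, and the block $:\Nak_{-m}\Nak_m:$ is already normally ordered, so $\Nak_\lambda(\alpha)=(\Nak_{-m}\Nak_m)(\alpha)$ in the notation \eqref{eq:diagonal-operator}.
\end{itemize}
Summing over $m>0$ gives \eqref{eq:G0-explicit}.

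\textbf{Main obstacle.} The only delicate point is the empty-partition convention in Step 2. I would check it directly in the case $\alpha=\1$. There $G_0(\1,n)=\operatorname{rk}\mathcal O^{[n]}=n$, and the claim becomes $-\sum_{m>0}(\Nak_{-m}\Nak_m)(\1)=\nn$. I would verify this with the Heisenberg relation \eqref{eq:heisenberg} and the Fock description of $\nn$ from \cite[Theorem~3.1]{LiQinWang2002b}, namely \eqref{eq:number-commutator}. This confirms both that the sign in \eqref{eq:G0-explicit} matches our normalization and that the empty-partition terms do not contribute.
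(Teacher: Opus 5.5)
Your proposal is correct and matches the paper, which also does not reprove this statement: it quotes \cite[Theorem~4.6]{LiQinWang2002b} in the form recorded in \cite[Theorem~3.3]{Alhwaimel2025} and reads off \eqref{eq:G0-explicit} as the case $k=0$. Your treatment of the $\ell(\lambda)=0$ terms as zero is the same convention the paper adopts in \eqref{eq:zero-current-convention}, and your check via $G_0(\1,n)=n$ (in particular $G_0(\1,0)=0$ on the vacuum) is a sound way to confirm both that convention and the sign in \eqref{eq:G0-explicit}.
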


\begin{lemma}\label[lemma]{lem:K-rational-zero}
If $K_X$ is numerically trivial, then
\begin{align}
\GG_k(\alpha)
={}-
\sum_{\substack{\ell(\lambda)=k+2\\|\lambda|=0}}
\frac{\Nak_\lambda(\alpha)}{\lambda!}
+
\sum_{\substack{\ell(\lambda)=k\\|\lambda|=0}}
\frac{s(\lambda)-2}{24\lambda!}
\Nak_\lambda(\eX\alpha)
\label{eq:LQW2}
\end{align}
\end{lemma}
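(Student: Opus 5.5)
The plan is to start from \Cref{thm:LQW} and show that the two terms involving $\KX$ vanish identically as operators when $\KX$ is numerically trivial. The key observation is that the operator $\Nak_\lambda(\beta)$ depends only on the cohomology class $\beta\in H$, and is linear in it. Indeed, it is defined through the Künneth decomposition of $\tau_{r*}\beta$, which is linear in $\beta$, and the normally ordered products of Nakajima operators are multilinear in their arguments. So it suffices to show that $\KX\alpha=0$ and $\KX^2\alpha=0$ in $H^*(X,\Q)$ for every $\alpha\in H$. Here $\KX$ denotes $c_1(K_X)\in H^2(X,\Q)$.

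The first step is to pass from numerical triviality to vanishing in rational cohomology. If $K_X$ is numerically trivial, then $\int_X c_1(K_X)\cdot c_1(D)=0$ for every divisor $D$. Numerically trivial line bundles are those whose class in $\mathrm{NS}(X)$ is torsion: by Hodge index, a nonzero class in $\mathrm{NS}(X)_\Q$ pairs nontrivially with some class, since the intersection form on $\mathrm{NS}(X)_\Q$ is nondegenerate. Hence $c_1(K_X)=0$ in $H^2(X,\Q)$.

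It then follows that $\KX\alpha=0$ and $\KX^2\alpha=0$ for all $\alpha$. Thus $\Nak_\lambda(\KX\alpha)=\Nak_\lambda(\KX^2\alpha)=0$, and the third and fourth sums in \eqref{eq:LQW} drop out. This leaves exactly \eqref{eq:LQW2}. Note that the coefficients $g_{1,\lambda},g_{2,\lambda}$ play no role, so their explicit values are not needed.

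The only point that needs care is the nondegeneracy of the intersection pairing on $\mathrm{NS}(X)\otimes\Q$, which gives numerically trivial $\Rightarrow$ rationally trivial. It is standard for smooth projective surfaces: Hodge index together with an ample class shows the pairing is nondegenerate on $\mathrm{NS}(X)_\Q$. I would simply cite this.
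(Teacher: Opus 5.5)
Your proposal is correct and is the same argument as the paper's: the paper simply cites the standard fact that, for divisors, numerical and homological equivalence agree with $\Q$-coefficients, so $\KX=0$ in $H^2(X,\Q)$ and the $\KX$- and $\KX^2$-terms of \eqref{eq:LQW} vanish by linearity of $\Nak_\lambda$ in its class. Your justification of that fact works provided ``Hodge index'' is read in its Hodge-theoretic form, namely negative definiteness of the cup product on primitive classes in $H^{1,1}(X,\mathbb R)$; the purely algebraic version only concerns $\mathrm{Num}(X)$ and does not by itself rule out numerically trivial classes that are nonzero in $H^2(X,\Q)$.
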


\begin{proof}
This follows from the standard fact that, with \(\mathbb{Q}\)-coefficients, numerical equivalence and homological equivalence coincide for divisors.
\end{proof}

\subsection{Two operator-valued currents}

Define the Heisenberg field
\begin{equation}\label{eq:heisenberg-field}
\Afield(\alpha;x)
=\sum_{m\neq0}\Nak_m(\alpha)x^{-m}.
\end{equation}
For $r\geq1$, set
\begin{equation}\label{eq:V-current}
\Vcur_r(\alpha;x)
=
\frac1{r!}
\sum_{m_1,\ldots,m_r\neq0}
:\Nak_{m_1}\cdots\Nak_{m_r}:
(\tau_{r*}\alpha)
\,x^{-\sum_{h=1}^r m_h},
\end{equation}
and
\begin{equation}\label{eq:S-current}
\Scur_r(\alpha;x)
=
\frac1{r!}
\sum_{m_1,\ldots,m_r\neq0}
\left(\sum_{h=1}^r m_h^2\right)
:\Nak_{m_1}\cdots\Nak_{m_r}:
(\tau_{r*}\alpha)
\,x^{-\sum_{h=1}^r m_h}.
\end{equation}
Equivalently, the two currents have the field expressions
\[
\Vcur_r(\alpha;x)
=\frac1{r!}
:\underbrace{\Afield(x)\cdots\Afield(x)}_{r\text{ factors}}:
(\tau_{r*}\alpha)
\]
and
\begin{equation}\label{eq:S-marked-half-edge}
\Scur_r(\alpha;x)
=
\frac1{r!}\sum_{h=1}^r
:\Afield(x)\cdots D_x^2\Afield(x)\cdots\Afield(x):
(\tau_{r*}\alpha),
\end{equation}
where \(D_x=x\frac{\partial}{\partial x}\) and $D_x^2$ acts on the field in the $h$-th position.
Here the fields are applied multilinearly to the diagonal class: if
$\tau_{r*}\alpha=\sum_\nu\alpha_{\nu,1}\otimes\cdots\otimes\alpha_{\nu,r}$
is a homogeneous K\"unneth decomposition, then
\[
:\Afield(x)\cdots\Afield(x):(\tau_{r*}\alpha)
=\sum_\nu
:\Afield(\alpha_{\nu,1};x)\cdots\Afield(\alpha_{\nu,r};x):.
\]
Expanding each field using \eqref{eq:heisenberg-field} recovers
\eqref{eq:V-current} and \eqref{eq:S-current}, since
$D_x^2x^{-m}=m^2x^{-m}$.

We set
\begin{equation}\label{eq:zero-current-convention}
\Vcur_0=\Scur_0=0.
\end{equation}

These operator-valued currents package the two types of Heisenberg sums appearing in the Li--Qin--Wang formula; compare \cite[Definitions~4.1 and~5.1, Lemma~5.3]{LiQinWang2002b}.

\begin{lemma}\label[lemma]{lem:current-constant-terms}
For $r\geq1$,
\begin{align}
[x^0]\Vcur_r(\alpha;x)
&=
\sum_{\substack{\ell(\lambda)=r\\|\lambda|=0}}
\frac{\Nak_\lambda(\alpha)}{\lambda!},
\label{eq:V-CT}\\
[x^0]\Scur_r(\alpha;x)
&=
\sum_{\substack{\ell(\lambda)=r\\|\lambda|=0}}
\frac{s(\lambda)\Nak_\lambda(\alpha)}{\lambda!}.
\label{eq:S-CT}
\end{align}
\end{lemma}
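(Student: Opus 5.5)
The plan is to extract the coefficient of $x^0$ in \eqref{eq:V-current} and \eqref{eq:S-current} directly, and then regroup ordered tuples of modes into generalized partitions. Since $x$ appears only through $x^{-\sum_h m_h}$, the constant term keeps exactly the tuples $(m_1,\ldots,m_r)\in(\Z\setminus\{0\})^r$ with $\sum_h m_h=0$:
\[
[x^0]\Vcur_r(\alpha;x)=\frac1{r!}\sum_{\substack{m_1,\ldots,m_r\neq0\\ m_1+\cdots+m_r=0}}:\Nak_{m_1}\cdots\Nak_{m_r}:(\tau_{r*}\alpha).
\]
For each fixed operator coefficient this sum is finite, because only finitely many ordered tuples correspond to a given multiset. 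For $\Scur_r$ the same formula holds with the extra weight $\sum_h m_h^2$ inserted.

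Next I group the ordered tuples according to their underlying multiset $\lambda$, which satisfies $\ell(\lambda)=r$ and $|\lambda|=0$; in the $\Scur_r$ case the weight $\sum_h m_h^2$ equals $s(\lambda)$ and is constant on each class. There are exactly $r!/\lambda!$ tuples in the class of $\lambda$. It then suffices to show that every tuple in the class contributes the same operator, namely $\Nak_\lambda(\alpha)$. Granting this, the class contributes $\frac1{r!}\cdot\frac{r!}{\lambda!}\Nak_\lambda(\alpha)$, which gives \eqref{eq:V-CT}; multiplying by $s(\lambda)$ gives \eqref{eq:S-CT}.

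The main obstacle is this invariance under permutations, because the modes are super and the K\"unneth components of $\tau_{r*}\alpha$ carry parities. Fix a permutation $\pi$ of positions taking one tuple to another. I would use two facts.

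First, $\tau_{r*}\alpha$ is invariant under the super permutation action on $H^{\otimes r}$, since $\tau_r$ commutes with permuting the factors of $X^r$. Here the permutation action on $H^{\otimes r}$ is taken with Koszul signs, following the convention stated after \eqref{eq:diagonal-operator}. Concretely, $\sum_\nu\alpha_{\nu,1}\otimes\cdots\otimes\alpha_{\nu,r}$ equals $\sum_\nu\pm\,\alpha_{\nu,\pi(1)}\otimes\cdots\otimes\alpha_{\nu,\pi(r)}$, where $\pm$ is the Koszul sign of $\pi$. Independence of the choice of decomposition, noted after \eqref{eq:diagonal-operator}, lets us substitute the permuted decomposition.

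Second, the super normal ordering \eqref{eq:super-normal-ordering} is graded-symmetric: permuting its factors multiplies the result by the same Koszul sign. This holds because normal ordering sorts creation operators to the left and annihilation operators to the right. Within each group the factors supercommute, since by \eqref{eq:heisenberg} the supercommutator vanishes unless $m=-n$, and two modes of the same sign cannot satisfy this. The defining sign $\varepsilon(\sigma;\boldsymbol\beta)$ is exactly the Koszul sign, so the result depends only on the multiset of (mode, class) pairs, with signs recorded consistently.

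These two Koszul signs cancel. Hence $:\Nak_{m_{\pi(1)}}\cdots\Nak_{m_{\pi(r)}}:(\tau_{r*}\alpha)=\,:\Nak_{m_1}\cdots\Nak_{m_r}:(\tau_{r*}\alpha)$. This also shows that $\Nak_\lambda(\alpha)$ is independent of the chosen ordering of $\lambda$, which is the same fact. Once the sign bookkeeping is checked, the lemma follows.
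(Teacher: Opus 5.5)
Your proposal is correct and follows the same route as the paper: keep the tuples with $\sum_h m_h=0$, then count the $r!/\lambda!$ ordered tuples lying over each generalized partition $\lambda$, using the weight $\sum_h m_h^2=s(\lambda)$ in the case of $\Scur_r$. Your extra check, that the supersymmetry of $\tau_{r*}\alpha$ and the graded symmetry of super normal ordering produce cancelling Koszul signs, is left implicit in the paper, which absorbs it into the assertion that $\Nak_\lambda(\alpha)$ may be defined using any ordering of $\lambda$.
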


\begin{proof}
The identities follow from the calculation in the proof of ~\cite[Lemma~5.3]{LiQinWang2002b}. For completeness, we give the proof in our notation.

For the first identity, taking the constant term in
\eqref{eq:V-current} gives
\[
[x^0]\Vcur_r(\alpha;x)
=
\frac{1}{r!}
\sum_{\substack{m_1,\ldots,m_r\neq 0\\
m_1+\cdots+m_r=0}}
:\Nak_{m_1}\cdots\Nak_{m_r}:
(\tau_{r*}\alpha)
=\sum_{\substack{\ell(\lambda)=r\\|\lambda|=0}}
\frac{\Nak_\lambda(\alpha)}{\lambda!},
\]
since the number of times \(\Nak_\lambda(\alpha)\) appears in the middle term is
\[
\binom{\sum_i n_i}{(n_i)_i}=\frac{r!}{\lambda!},
\]
where \(\lambda=(\cdots(-2)^{n_{-2}}(-1)^{n_{-1}}1^{n_1}2^{n_2}\cdots)\).

For the second identity, we similarly have
\[
[x^0]\Scur_r(\alpha;x)
=
\frac{1}{r!}
\sum_{\substack{m_1,\ldots,m_r\neq0\\
m_1+\cdots+m_r=0}}
\left(\sum_{h=1}^r m_h^2\right)
:\Nak_{m_1}\cdots\Nak_{m_r}:
(\tau_{r*}\alpha)
=\sum_{\substack{\ell(\lambda)=r\\|\lambda|=0}}
\frac{s(\lambda)\Nak_\lambda(\alpha)}{\lambda!}.
\]

\end{proof}

The following result is the main result of this section and may be viewed as a current-algebra reformulation of the zero-mode realization of the Chern character operators in \cite[Lemma~5.3, Lemma~5.4]{LiQinWang2002b}.

\begin{proposition}[Current reduction]\label[proposition]{prop:current-reduction}
Assume that $K_X$ is numerically trivial.  Then for every $k\geq0$ and $\alpha\in H$,
\begin{equation}\label{eq:current-reduction}
\GG_k(\alpha)
=-[x^0]\Vcur_{k+2}(\alpha;x)
+\frac1{24}[x^0]
\bigl(
  \Scur_k(\eX\alpha;x)-2\Vcur_k(\eX\alpha;x)
\bigr).
\end{equation}
\end{proposition}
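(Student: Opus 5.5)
The plan is to combine \Cref{lem:K-rational-zero} with \Cref{lem:current-constant-terms}. Under the hypothesis that \(K_X\) is numerically trivial, \Cref{lem:K-rational-zero} writes \(\GG_k(\alpha)\) as the sum \eqref{eq:LQW2}. That sum has two pieces: a sum over generalized partitions with \(\ell(\lambda)=k+2\) and \(|\lambda|=0\), and a sum over those with \(\ell(\lambda)=k\) and \(|\lambda|=0\) weighted by \((s(\lambda)-2)/24\). I will identify each piece with the corresponding constant term on the right-hand side of \eqref{eq:current-reduction}.

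For the first piece, apply \eqref{eq:V-CT} with \(r=k+2\geq 2\). It gives
\[
-\sum_{\substack{\ell(\lambda)=k+2\\|\lambda|=0}}\frac{\Nak_\lambda(\alpha)}{\lambda!}=-[x^0]\Vcur_{k+2}(\alpha;x).
\]

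For the second piece, assume first that \(k\geq1\). Split \(\frac{s(\lambda)-2}{24}\) into \(\frac{1}{24}s(\lambda)\) and \(-\frac{2}{24}\), and apply \eqref{eq:S-CT} and \eqref{eq:V-CT} with \(r=k\) and class \(\eX\alpha\). This gives
\[
\frac{1}{24}[x^0]\Scur_k(\eX\alpha;x)-\frac{2}{24}[x^0]\Vcur_k(\eX\alpha;x).
\]
By linearity of \([x^0]\) and of \(\Nak_\lambda\) in the cohomology class, this matches the second term of \eqref{eq:current-reduction}. Inhomogeneous \(\alpha\) causes no difficulty, since everything is linear in \(\alpha\).

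The only point needing care is the boundary case \(k=0\). Here \Cref{lem:current-constant-terms} does not apply, and the convention \eqref{eq:zero-current-convention} sets \(\Vcur_0=\Scur_0=0\). So the correction term on the right of \eqref{eq:current-reduction} vanishes, and I must check that the \(\ell(\lambda)=0\) sum in \eqref{eq:LQW2} also contributes nothing. Read literally, the empty partition would give the term \(\frac{0-2}{24}\Nak_\emptyset(\eX\alpha)\), a multiple of \((\eX\alpha,1)\id\) or its analogue. To rule this out I will compare with the explicit formula \eqref{eq:G0-explicit}, \(\GG_0(\alpha)=-\sum_{m>0}(\Nak_{-m}\Nak_m)(\alpha)\), which has no scalar term. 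The right-hand side of \eqref{eq:G0-explicit} equals \(-[x^0]\Vcur_2(\alpha;x)\): the partitions with \(\ell=2\) and \(|\lambda|=0\) are \(\{-m,m\}\) with \(\lambda!=1\), and normal ordering puts \(\Nak_{-m}\) first. So the \(k=0\) case of \eqref{eq:current-reduction} follows directly from \Cref{thm:LQW}. In other words, the \(\ell(\lambda)=0\) sum in \eqref{eq:LQW2} is to be read as empty, consistent with \eqref{eq:zero-current-convention}.

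I expect this convention check at \(k=0\) to be the only real obstacle. The rest is bookkeeping of the constant-term identities already established.
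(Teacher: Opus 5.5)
Your proposal is correct and follows the paper's proof, which likewise matches the two sums of \eqref{eq:LQW2} term by term with the constant terms in \Cref{lem:current-constant-terms}. Your explicit check of $k=0$ via \eqref{eq:G0-explicit} and the convention $\Vcur_0=\Scur_0=0$ is a worthwhile addition. The paper's one-line argument implicitly treats the $\ell(\lambda)=0$ sum as vanishing there, where \Cref{lem:current-constant-terms} (stated for $r\geq1$) does not apply.
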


\begin{proof}
The Li--Qin--Wang formula \Cref{thm:LQW}, together with
\Cref{lem:K-rational-zero,lem:current-constant-terms}, gives the following
constant-term reformulation. 
\begin{align*}
\GG_k(\alpha)
&=-
\sum_{\substack{\ell(\lambda)=k+2\\|\lambda|=0}}
\frac{\Nak_\lambda(\alpha)}{\lambda!}
+\frac1{24}
\sum_{\substack{\ell(\lambda)=k\\|\lambda|=0}}
\frac{\bigl(s(\lambda)-2\bigr)\Nak_\lambda(\eX\alpha)}{\lambda!}
\\
&=-[x^0]\Vcur_{k+2}(\alpha;x)
+\frac1{24}\left(
[x^0]\Scur_k(\eX\alpha;x)
-2[x^0]\Vcur_k(\eX\alpha;x)
\right),
\end{align*}
which is \eqref{eq:current-reduction}.
\end{proof}
\section{The graded Wick expansion}\label{sec:wick}

This section is the heart of the proof. As mentioned in \Cref{sec:introduction} and reviewed in \Cref{sec:main-proof}, the generating function appearing in Qin's conjecture is a linear combination of generating functions of intersection numbers of the universal classes \(G_j(\alpha,n)\) against the total Chern class. In \Cref{prop:reduced-trace}, we relate these generating functions to the supertraces of the operators \(\GG_j(\alpha)\) against the Carlsson--Okounkov operator \(\WW_y\). \Cref{prop:current-reduction} further relates these supertraces to the constant terms of supertraces of operator-valued currents. The key point is that the coefficients of these operator-valued currents are normally ordered products of Nakajima operators.

Therefore, Qin's conjecture is essentially reduced to the computation of supertraces of normally ordered products of Nakajima operators. Since the Nakajima operators form a Heisenberg algebra, the operator algebra of a free QFT, we are able to prove an analogue of Wick's theorem, \Cref{lem:normal-ordering-subtraction}. As in Wick's theorem, we first compute the supertrace of an ordinary product of Nakajima operators in \Cref{lem:shifted-trace,prop:graded-wick-recursion}, and then deduce the supertrace of a normally ordered product by induction. The corresponding formula for normally ordered products of currents then follows in \Cref{thm:wick-current}. The upshot is that the final expression in \Cref{thm:wick-current} involves only three functions, which are reviewed in \Cref{subsec:scalar-kernels}.

Note that, as mentioned in the introduction, the idea of using Wick's theorem to compute the generating function already existed in the literature; see \cite{LiQinWang2002b, QinYu2018, Alhwaimel2025}. In particular, the computations in \Cref{subsec:ext-trace}--\Cref{subsec:normal-ordering} are based on the corresponding computations in \cite[Section~4]{QinYu2018} and \cite[Lemmas~4.5--4.7]{Alhwaimel2025}.

\subsection{The Carlsson--Okounkov trace}\label{subsec:ext-trace}

Let $T=\C^*$ act trivially on $X$, let $\C_m$ denote the one-dimensional $T$-representation of weight $m$, and put
\[
t=c_1^T(\C_1)\in H_T^2(\operatorname{pt},\Q).
\]
The equivariant line bundle used in the tangent-class trace is
\begin{equation}\label{eq:L1-equivariant}
\mathfrak L_1=\mathcal O_X\otimes\C_1,
\qquad
c_1^T(\mathfrak L_1)=t\1.
\end{equation}
Thus $\mathfrak L_1$ is trivial after forgetting the $T$-linearization, but it is not equivariantly trivial.

For a $T$-equivariant line bundle $\mathfrak L$, write
$\ell=c_1^T(\mathfrak L)$.  In the Heisenberg convention of ~\cite[Equation~(3.6)]{QinYu2018}, the Carlsson--Okounkov Ext vertex operator~\cite[Theorem~1]{CarlssonOkounkov2012} is
\begin{equation}\label{eq:W-general}
\WW_T(\mathfrak L,y)
=\Gamma_-(\ell-K_X,y)\Gamma_+(-\ell,y),
\end{equation}
where, in our Heisenberg convention,
\begin{equation}\label{eq:Gamma-def}
\Gamma_\pm(\gamma,y)
=
\exp\!\left(
  \sum_{n>0}\frac{y^{\mp n}}n\Nak_{\pm n}(\gamma)
\right).
\end{equation}
For \eqref{eq:L1-equivariant}, this gives
\begin{equation}\label{eq:W-L1-equivariant}
\WW_T(\mathfrak L_1,y)
=\Gamma_-(t\1-K_X,y)\Gamma_+(-t\1,y).
\end{equation}
Following the convention in ~\cite[Section~3]{QinYu2018}, we specialize the equivariant generator to $t=1$:
\begin{equation}\label{eq:W-specialized}
\WW_y
:=\left.\WW_T(\mathfrak L_1,y)\right|_{t=1}
=\Gamma_-(\1-K_X,y)\Gamma_+(-\1,y).
\end{equation}

The following is the supertrace form of ~\cite[Lemma~3.2]{QinYu2018}, based on the Ext operator of ~\cite[Theorem~1]{CarlssonOkounkov2012}.

\begin{theorem}\label{thm:CO-trace}
For homogeneous classes $\alpha_1,\ldots,\alpha_N$ and nonnegative integers $j_1,\ldots,j_N$, put
\begin{equation}\label{eq:F-G}
F^{\alpha_1,\ldots,\alpha_N}_{j_1,\ldots,j_N}(q)
=
\sum_{n\geq0}q^n
\int_{X^{[n]}}
\left(\prod_{i=1}^NG_{j_i}(\alpha_i,n)\right)
 c(T_{X^{[n]}}).
\end{equation}
Then
\begin{equation}\label{eq:CO-trace}
F^{\alpha_1,\ldots,\alpha_N}_{j_1,\ldots,j_N}(q)
=
\left.\Str_{\HH}\!\left(
q^{\nn}\WW_T(\mathfrak L_1,y)
\prod_{i=1}^N\GG_{j_i}(\alpha_i)
\right)\right|_{t=1}
=\Str_{\HH}\!\left(
q^{\nn}\WW_y
\prod_{i=1}^N\GG_{j_i}(\alpha_i)
\right).
\end{equation}
\end{theorem}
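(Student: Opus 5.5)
The plan is to derive \Cref{thm:CO-trace} from the Carlsson--Okounkov theorem~\cite[Theorem~1]{CarlssonOkounkov2012}, using the same mechanism as in \cite[Lemma~3.2]{QinYu2018}. Carlsson and Okounkov identify the Ext operator, for a $T$-equivariant line bundle $\mathfrak L$, with the correspondence on $\bigoplus_{n,m}H^*(X^{[n]}\times X^{[m]})$ given by the equivariant Euler class of the virtual bundle $E_{\mathfrak L}$. This is the bundle whose fibre at $(\xi,\eta)$ is $\chi(\mathcal O_\xi,\mathcal O_X\otimes\mathfrak L)-\chi(\mathcal I_\xi,\mathcal I_\eta\otimes\mathfrak L)$. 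I will take its vertex-operator expression \eqref{eq:W-general}, in the sign convention of \cite[Equation~(3.6)]{QinYu2018}, as given.

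The first step is to restrict to the diagonal. On the diagonal block $n=m$ with $\mathfrak L$ trivial, the restriction of $E_{\mathfrak L}$ to the diagonal $\Delta\subset X^{[n]}\times X^{[n]}$ is $T_{X^{[n]}}$ (the Ext$^1$ term), up to the equivariant twist. For $\mathfrak L=\mathfrak L_1$ the twist is the weight-one character, so the restriction is $T_{X^{[n]}}\otimes\C_1$. Its equivariant Euler class is
\[
e^T\bigl(T_{X^{[n]}}\otimes\C_1\bigr)=\sum_i c_i(T_{X^{[n]}})\,t^{2n-i}.
\]
At $t=1$ this becomes the total Chern class $c(T_{X^{[n]}})$.

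The second step is the standard trace identity for correspondences. If an operator $\Phi$ on $H^*(X^{[n]})$ is given by a class $K\in H^*(X^{[n]}\times X^{[n]})$, then the supertrace of $\Phi\circ(\cup\,\gamma)$ equals $\int_{X^{[n]}\times X^{[n]}}K\cdot(\gamma\otimes 1)\cdot[\Delta]$, with Koszul signs. By the Lefschetz/Künneth formula this is $\int_\Delta K|_\Delta\,\gamma$. In our setting $\gamma=\prod_i G_{j_i}(\alpha_i,n)$ is even whenever the total product has nonzero integral, so it commutes without sign issues. I will apply this with $K=e^T(E_{\mathfrak L_1})$ and $\gamma=\prod_iG_{j_i}(\alpha_i,n)$; this is exactly the operator $\prod_i\GG_{j_i}(\alpha_i)$ composed with $\WW_T(\mathfrak L_1,y)$.

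The third step is to sum over $n$ with weight $q^n$, which gives the first equality in \eqref{eq:CO-trace}. The second equality is then immediate from the definition \eqref{eq:W-specialized}.

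The main obstacle is bookkeeping rather than ideas. Two points need care. First, the $y$-dependence has to disappear on the diagonal: the $y$-power on the $(n,m)$ block is $y^{m-n}$, or of that shape, so it is trivial when $n=m$. Second, the signs and conventions must match. The categorical supertrace has to agree with the Lefschetz-type diagonal integral, including the sign $(-1)^{\dim}$ coming from the pairing on $X^{[n]}$, which has even dimension. The identification of Carlsson--Okounkov's normalization with \eqref{eq:Gamma-def} will be imported from \cite{QinYu2018} rather than rederived.
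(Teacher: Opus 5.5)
Your proposal is correct and follows the paper's argument. You restrict the Carlsson--Okounkov Ext class to the diagonal to get $T_{X^{[n]}}\otimes\C_1$, observe that its top equivariant Chern class $\sum_i c_i(T_{X^{[n]}})t^{2n-i}$ specializes at $t=1$ to $c(T_{X^{[n]}})$, and combine this with the diagonal (supertrace) decomposition of \cite[Lemma~3.2]{QinYu2018}; the second equality is the definition \eqref{eq:W-specialized}. One small slip: the constant term in the fibre of $E_{\mathfrak L}$ should be $\chi(\mathcal O_X,\mathfrak L)$, not $\chi(\mathcal O_\xi,\mathcal O_X\otimes\mathfrak L)$, since the latter gives the wrong rank; this does not affect your argument, because you then correctly use that the diagonal restriction is the tangent bundle.
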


To see explicitly why the specialization in \eqref{eq:CO-trace} produces the total tangent Chern class, let $\mathcal E_{\mathfrak L_1}$ be the Carlsson--Okounkov Ext bundle.  On the diagonal $\Delta_n\subset X^{[n]}\times X^{[n]}$ one has
\[
\left.\mathcal E_{\mathfrak L_1}\right|_{\Delta_n}
=T_{X^{[n]}}\otimes\C_1.
\]
Consequently,
\begin{equation}\label{eq:equivariant-top-chern}
c_{2n}^T\!\left(T_{X^{[n]}}\otimes\C_1\right)
=\sum_{i=0}^{2n}c_i(T_{X^{[n]}})t^{2n-i},
\qquad
\left.c_{2n}^T\!\left(T_{X^{[n]}}\otimes\C_1\right)\right|_{t=1}
=c(T_{X^{[n]}}).
\end{equation}
Together with the diagonal decomposition of ~\cite[Lemma~3.2]{QinYu2018}, this proves the trace identity \eqref{eq:CO-trace}.  If one replaced $\mathfrak L_1$ by the weight-zero equivariant bundle $\mathcal O_X\otimes\C_0$, then the left-hand side of \eqref{eq:equivariant-top-chern} would instead be $c_{2n}(T_{X^{[n]}})$; the trace would no longer compute the total-Chern-class series \eqref{eq:F-G}.

The symbol $\Str$ in \eqref{eq:CO-trace} is the usual supertrace on super-vector spaces.  The trace in ~\cite[Lemma~3.2]{QinYu2018} is written $\Tr$; the diagonal decomposition there carries the sign $(-1)^{|e_i|}$ (for a basis $\{e_i\}$ of $H^*(X^{[n]})$, as in the cited source) and therefore gives the supertrace in the graded setting.  This convention is also forced by the identity with no cup-product operator in \eqref{eq:no-insertion-trace}: odd cohomology contributes to the topological Euler characteristic with a minus sign.

From now on, we assume that \(K_X\) is numerically trivial, and hence also trivial in cohomology with \(\mathbb{Q}\)-coefficients. Then \eqref{eq:W-specialized} becomes
\begin{equation}\label{eq:W-Kzero}
\WW_y
=
\Gamma_-(\1,y)\Gamma_+(-\1,y)
=e^{A_-}e^{A_+},
\end{equation}
where
\begin{equation}\label{eq:Apm}
A_-=
\sum_{n>0}\frac{y^n}{n}\Nak_{-n}(\1),
\qquad
A_+=-
\sum_{n>0}\frac{y^{-n}}{n}\Nak_n(\1).
\end{equation}
Since $(\1,\1)=\int_X1=0$ for degree reasons, the two vertex-operator exponentials have no mutual normal-ordering scalar.

Taking $N=0$ in \eqref{eq:CO-trace} and applying G\"ottsche's formula \eqref{eq:gottsche-product} gives the product for the trace with the vertex operator:
\begin{equation}\label{eq:no-insertion-trace}
\Str_{\HH}(q^{\nn}\WW_y)=\qpoch^{-\chi(X)}.
\end{equation}
Define the normalized functional
\begin{equation}\label{eq:Omega-def}
\Omega_{q,y}(A)
=
\frac{\Str_{\HH}(q^{\nn}\WW_yA)}
{\Str_{\HH}(q^{\nn}\WW_y)}.
\end{equation}

The following proposition is a direct consequence of \Cref{thm:CO-trace} and \eqref{eq:Omega-def}, which we record for later use.

\begin{proposition}\label[proposition]{prop:reduced-trace}
For homogeneous $\alpha_i$ and $j_i\geq0$, let $F^{\boldsymbol\alpha}_{\mathbf j}$ be the series in \eqref{eq:F-G} and define its reduced form by
\[
\widehat F^{\boldsymbol\alpha}_{\mathbf j}(q)
:=\qpoch^{\chi(X)}F^{\boldsymbol\alpha}_{\mathbf j}(q).
\]
Then
\begin{equation}\label{eq:reduced-trace}
\widehat F^{\boldsymbol\alpha}_{\mathbf j}(q)
=\Omega_{q,y}\!\left(\prod_{i=1}^N\GG_{j_i}(\alpha_i)\right).
\end{equation}
In particular, this normalized trace is independent of $y$.
\end{proposition}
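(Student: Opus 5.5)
The identity \eqref{eq:reduced-trace} will follow by dividing the trace formula of \Cref{thm:CO-trace} by the no-insertion trace \eqref{eq:no-insertion-trace}. The independence of $y$ will then be read off from the left-hand side.

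First, I apply \Cref{thm:CO-trace} to the given homogeneous classes $\alpha_i$ and integers $j_i$. This gives
\[
F^{\boldsymbol\alpha}_{\mathbf j}(q)=\Str_{\HH}\Bigl(q^{\nn}\WW_y\prod_{i=1}^N\GG_{j_i}(\alpha_i)\Bigr),
\]
with $\WW_y$ as in \eqref{eq:W-Kzero}, since $K_X$ is now assumed numerically trivial. Next, taking $N=0$ in the same theorem and using G\"ottsche's formula \eqref{eq:gottsche-product} gives $\Str_{\HH}(q^{\nn}\WW_y)=\qpoch^{-\chi(X)}$, which is \eqref{eq:no-insertion-trace}. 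This series has constant term $1$, so it is invertible in $\Q[[q]]$ (with coefficients that a priori could be Laurent in $y$). Hence the quotient $\Omega_{q,y}$ in \eqref{eq:Omega-def} is well defined. Dividing the first identity by the second gives
\[
\Omega_{q,y}\Bigl(\prod_i\GG_{j_i}(\alpha_i)\Bigr)=\qpoch^{\chi(X)}F^{\boldsymbol\alpha}_{\mathbf j}(q)=\widehat F^{\boldsymbol\alpha}_{\mathbf j}(q),
\]
which is \eqref{eq:reduced-trace}.

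For the final claim, note that the right-hand side $\widehat F^{\boldsymbol\alpha}_{\mathbf j}(q)$ is defined purely through intersection numbers on $X^{[n]}$ and $\qpoch^{\chi(X)}$. Neither involves $y$, so the normalized trace is independent of $y$.

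The only point needing care is that the supertrace is well defined as a formal series. Each $q^n$-coefficient is a finite-dimensional supertrace on $H^*(X^{[n]})$. Only the number-diagonal blocks contribute, and within them the $y$-powers coming from $\Gamma_\pm$ cancel, since a creation mode of weight $m$ carries $y^m$ and an annihilation mode carries $y^{-m}$. This compatibility of supertrace conventions is already built into \Cref{thm:CO-trace}, so I do not expect a genuine obstacle.
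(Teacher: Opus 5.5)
Your proposal is correct and follows the paper's argument: the paper records the proposition as a direct consequence of \Cref{thm:CO-trace}, dividing by the $N=0$ trace $\Str_{\HH}(q^{\nn}\WW_y)=\qpoch^{-\chi(X)}$ from \eqref{eq:no-insertion-trace} and reading off $y$-independence from the left-hand side, exactly as you do. Your closing observation is a valid extra direct check that each number-diagonal block is $y$-free. It works because in $\WW_y$ the exponent of $y$ equals the net change in $\nn$.
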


\subsection{The shifted trace relation and centering}

Put $\rho=q^{\nn}\WW_y$.  By the parity convention of \Cref{subsec:super-heisenberg}, the modes of $\1\in H^0(X)$ and the operators $A_\pm$ in \eqref{eq:Apm} are even.  Their exponentials and $\WW_y$ are therefore even.  Since $q^{\nn}$ also preserves parity, $\rho$ is even.  We first compute its commutation with one Heisenberg mode.

\begin{lemma}\label[lemma]{lem:shifted-trace}
For $m\neq0$ and $\alpha\in H$,
\begin{equation}\label{eq:shifted-trace}
\rho\Nak_m(\alpha)
=
\left(q^{-m}\Nak_m(\alpha)+y^m\int_X\alpha\right)\rho.
\end{equation}
Consequently,
\begin{equation}\label{eq:mode-mean}
\Omega_{q,y}(\Nak_m(\alpha))
=\mu_m(\alpha)
:=\frac{y^m}{1-q^{-m}}\int_X\alpha.
\end{equation}
\end{lemma}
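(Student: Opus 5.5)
We will prove \eqref{eq:shifted-trace} by moving $\Nak_m(\alpha)$ past each of the three factors of $\rho=q^{\nn}e^{A_-}e^{A_+}$, starting from the right. Then we deduce \eqref{eq:mode-mean} from cyclicity of the supertrace.

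\emph{Step 1: the exponentials.} Since $A_\pm$ are even, the supercommutator $[A_\pm,\Nak_m(\alpha)]_{\mathrm s}$ is the ordinary commutator. By \eqref{eq:heisenberg} it is a scalar. Hence $e^{A}\Nak_m(\alpha)e^{-A}=\Nak_m(\alpha)+[A,\Nak_m(\alpha)]$ for $A=A_\pm$.

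We compute the two scalars from \eqref{eq:Apm}. Note that $(\1,\alpha)=\int_X\alpha$. For $A_+$, only the term $n=-m$ survives, so the commutator is nonzero only when $m<0$:
\[
[A_+,\Nak_m(\alpha)]=-\tfrac{y^{m}}{-m}\,[\Nak_{-m}(\1),\Nak_m(\alpha)]=-\tfrac{y^m}{-m}\cdot(m)\int_X\alpha=y^m\int_X\alpha .
\]
For $A_-$, the term $n=m$ survives, so the commutator is nonzero only when $m>0$:
\[
[A_-,\Nak_m(\alpha)]=\tfrac{y^m}{m}\cdot(m)\int_X\alpha=y^m\int_X\alpha .
\]
In both cases the pairing sign is harmless, since $\1$ is even and $(\1,\alpha)=(\alpha,\1)$. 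One should double-check the sign $[\Nak_{-m}(\1),\Nak_m(\alpha)]=m(\1,\alpha)$ directly from \eqref{eq:heisenberg}. Since exactly one of the two commutators is nonzero for a given $m$, we get
\[
\WW_y\Nak_m(\alpha)=\Bigl(\Nak_m(\alpha)+y^m\int_X\alpha\Bigr)\WW_y .
\]

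\emph{Step 2: the factor $q^{\nn}$.} By \eqref{eq:number-commutator}, $q^{\nn}\Nak_m(\alpha)q^{-\nn}=q^{-m}\Nak_m(\alpha)$, while the scalar term is unchanged. Combining this with Step 1 gives \eqref{eq:shifted-trace}.

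\emph{Step 3: the mean.} Take $\Str_{\HH}$ of \eqref{eq:shifted-trace}. Because $\rho$ is even, cyclicity of the supertrace gives $\Str(\Nak_m(\alpha)\rho)=\Str(\rho\Nak_m(\alpha))$. This yields
\[
(1-q^{-m})\Str(\rho\Nak_m(\alpha))=y^m\int_X\alpha\,\Str(\rho).
\]
Dividing by $(1-q^{-m})\Str(\rho)$ gives \eqref{eq:mode-mean}.

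\emph{The main obstacle.} The delicate point is justifying cyclicity on the infinite-dimensional space $\HH$. Here $\Nak_m(\alpha)$ shifts the number grading by $-m$, so $\rho\Nak_m(\alpha)$ has zero diagonal blocks when $m\neq0$. Read literally, both sides of \eqref{eq:mode-mean} degenerate. The correct statement must be understood in formal variables. The plan is to work with formal power series in $q$ and Laurent series in $y$, so that each coefficient of $q^n$ is a finite-dimensional supertrace, where cyclicity $\Str(AB)=(-1)^{|A||B|}\Str(BA)$ holds blockwise. Concretely, $\WW_y$ mixes number sectors with powers of $y$, and the trace of $q^{\nn}\WW_y\Nak_m$ picks out the part of $\WW_y$ raising $\nn$ by $m$. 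We would verify that the cyclic move is valid coefficientwise, using $q^{\nn}$ to convert $\Nak_m\rho$ into $\rho$ times a shifted operator. We then interpret $1/(1-q^{-m})$ as the expansion in the region dictated by the geometric series in $q$, namely $-q^m/(1-q^m)$ for $m>0$ and $1/(1-q^{|m|})$ for $m<0$. Checking that this expansion is consistent with the formal identity is where we expect the care to be needed.
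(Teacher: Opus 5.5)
Your proposal is correct and follows the paper's proof. The scalar commutators $[A_\pm,\Nak_m(\alpha)]$ make the conjugation by $\WW_y$ terminate, conjugating by $q^{\nn}$ gives the factor $q^{-m}$, and evenness of $\rho$ with cyclicity of $\Str$ yields the mean.

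One correction to your obstacle paragraph: $\rho\Nak_m(\alpha)$ does not have vanishing diagonal blocks, because $\WW_y$ does not preserve $\nn$ (each term shifting $\nn$ by $k$ carries $y^k$). Your coefficientwise justification of cyclicity is exactly right. No extra care is needed in the division, since $1-q^{-m}$ is a unit in formal Laurent series in $q$.
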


\begin{proof}
By \eqref{eq:heisenberg} and \eqref{eq:Apm},
\[
[A_-,\Nak_n(\alpha)]=y^n\int_X\alpha,
\qquad
[A_+,\Nak_{-n}(\alpha)]=y^{-n}\int_X\alpha
\quad(n>0).
\]
These are scalars, so the Baker--Campbell--Hausdorff expansion terminates and
\[
\WW_y\Nak_m(\alpha)\WW_y^{-1}
=\Nak_m(\alpha)+y^m\int_X\alpha.
\]
Equation \eqref{eq:number-commutator} gives
$q^{\nn}\Nak_mq^{-\nn}=q^{-m}\Nak_m$, proving \eqref{eq:shifted-trace}.  Apply $\Str_{\HH}$ to both sides of \eqref{eq:shifted-trace}, and divide by $\Str_{\HH}(\rho)$. Since \(\rho\) is even, \(\Str(\rho\Nak_m(\alpha))=\Str(\Nak_m(\alpha)\rho)\), and hence, by \eqref{eq:Omega-def}, this gives
\[
\Omega_{q,y}(\Nak_m(\alpha))
=q^{-m}\Omega_{q,y}(\Nak_m(\alpha))
+y^m\int_X\alpha.
\]
Solving gives \eqref{eq:mode-mean}.  For odd $\alpha$, both sides are zero: $\int_X\alpha=0$, and an odd operator has zero supertrace.
\end{proof}

\subsection{The graded Wick recursion}

Define \emph{centered} modes
\begin{equation}\label{eq:centered-mode}
\widetilde\Nak_m(\alpha)
=\Nak_m(\alpha)-\mu_m(\alpha)\id.
\end{equation}
Then \eqref{eq:shifted-trace} becomes homogeneous:
\begin{equation}\label{eq:centered-trace}
\rho\widetilde\Nak_m(\alpha)
=q^{-m}\widetilde\Nak_m(\alpha)\rho.
\end{equation}

Centering preserves the parity of a homogeneous mode. The next proposition shows that a product of an odd number of centered modes has zero normalized trace, whereas an even product is evaluated by pairings. In particular, this computes \(\Omega_{q,y}\) for arbitrary products of Nakajima operators.

\begin{proposition}[Graded Wick recursion]\label[proposition]{prop:graded-wick-recursion}
Let
$b_i=\widetilde\Nak_{m_i}(\alpha_i)$ for $1\leq i\leq r$,
with the $\alpha_i$ homogeneous.  Then
\begin{equation}\label{eq:graded-wick-recursion}
\Omega_{q,y}(b_1\cdots b_r)
=
\sum_{j=2}^r
(-1)^{|\alpha_1|(|\alpha_2|+\cdots+|\alpha_{j-1}|)}
\frac{[b_1,b_j]_{\mathrm s}}{1-q^{m_1}}
\cdot
\Omega_{q,y}(b_2\cdots\widehat b_j\cdots b_r).
\end{equation}
\begin{samepage}
In particular, every odd centered moment vanishes, and every even centered moment is the sum over complete pairings with the Koszul sign of \Cref{subsec:super-heisenberg}. For opposite indices, they are, with $n>0$,
\begin{align}
\Omega_{q,y}\bigl(
\widetilde\Nak_n(\alpha)
\widetilde\Nak_{-n}(\beta)
\bigr)
&=-\frac{n}{1-q^n}(\alpha,\beta),
\label{eq:covariance-plusminus}\\*
\Omega_{q,y}\bigl(
\widetilde\Nak_{-n}(\alpha)
\widetilde\Nak_n(\beta)
\bigr)
&=-\frac{nq^n}{1-q^n}(\alpha,\beta).
\label{eq:covariance-minusplus}
\end{align}
\end{samepage}
\end{proposition}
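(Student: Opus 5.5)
We prove the recursion by the standard cyclic-trace argument, using the homogeneous relation \eqref{eq:centered-trace} in place of a Gaussian measure. First we move $b_1$ to the right through the product. Iterating $b_1 b_j = [b_1,b_j]_{\mathrm s} + (-1)^{|\alpha_1||\alpha_j|} b_j b_1$ gives
\[
b_1 b_2\cdots b_r=\sum_{j=2}^r(-1)^{|\alpha_1|(|\alpha_2|+\cdots+|\alpha_{j-1}|)}[b_1,b_j]_{\mathrm s}\,b_2\cdots\widehat b_j\cdots b_r
+(-1)^{|\alpha_1|(|\alpha_2|+\cdots+|\alpha_r|)}b_2\cdots b_r b_1 .
\]
The supercommutators $[b_1,b_j]_{\mathrm s}=[\Nak_{m_1}(\alpha_1),\Nak_{m_j}(\alpha_j)]_{\mathrm s}=-m_1\delta_{m_1,-m_j}(\alpha_1,\alpha_j)$ are scalars by \eqref{eq:heisenberg}, since centering only subtracts scalars. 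They can therefore be pulled out of $\Omega_{q,y}$.

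Next we evaluate the last term. By \eqref{eq:centered-trace} we have $\rho b_1=q^{-m_1}b_1\rho$. The supertrace satisfies $\Str(AB)=(-1)^{|A||B|}\Str(BA)$ for homogeneous $A,B$, and $\rho$ is even. Cyclically moving $b_1$ from the far right to the far left therefore gives
\[
\Str(\rho\,b_2\cdots b_r b_1)=(-1)^{|\alpha_1|(|\alpha_2|+\cdots+|\alpha_r|)}\Str(b_1\rho\,b_2\cdots b_r)=(-1)^{|\alpha_1|(|\alpha_2|+\cdots+|\alpha_r|)}q^{m_1}\Str(\rho\,b_1\cdots b_r).
\]
The two Koszul signs cancel. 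Hence the last term contributes $q^{m_1}\Omega_{q,y}(b_1\cdots b_r)$. Moving it to the left side and dividing by $1-q^{m_1}$, which is nonzero because $m_1\neq0$, gives \eqref{eq:graded-wick-recursion}.

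Some care is needed with convergence and with the cyclicity of the supertrace on the infinite-dimensional $\HH$. Each graded piece $H^*(X^{[n]})$ is finite dimensional. Every operator here shifts the $\nn$-grading by a fixed amount, so only diagonal blocks contribute. The trace identities therefore hold coefficientwise as formal series in $q$, with $y$ treated as a formal parameter. This bookkeeping is the main point to check.

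The consequences follow by induction on $r$. For $r=0$ we have $\Omega_{q,y}(\id)=1$. For $r=1$ we have $\Omega_{q,y}(b_1)=0$ by \eqref{eq:mode-mean}. The recursion lowers $r$ by $2$, so odd centered moments vanish. Unrolling the recursion expresses each even moment as a sum over complete pairings. Each pair $(i<j)$ carries the factor $[b_i,b_j]_{\mathrm s}/(1-q^{m_i})$, and the accumulated signs are exactly the Koszul sign of the permutation bringing paired elements together.

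For the covariances, take $r=2$. With $m_1=n$ and $m_2=-n$ we get $[b_1,b_2]_{\mathrm s}=-n(\alpha,\beta)$, which gives \eqref{eq:covariance-plusminus}. With $m_1=-n$ and $m_2=n$ we get $[b_1,b_2]_{\mathrm s}=n(\alpha,\beta)$, and dividing by $1-q^{-n}$ gives $n(\alpha,\beta)/(1-q^{-n})=-nq^n(\alpha,\beta)/(1-q^n)$, which is \eqref{eq:covariance-minusplus}.
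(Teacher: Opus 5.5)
Your proof is correct and is essentially the paper's argument: the same decomposition $b_1B=sBb_1+R$ with $s=(-1)^{|\alpha_1|(|\alpha_2|+\cdots+|\alpha_r|)}$, the same use of \eqref{eq:centered-trace} together with graded cyclicity to get $\Omega_{q,y}(Bb_1)=s\,q^{m_1}\Omega_{q,y}(b_1B)$, and the same induction and $r=2$ evaluation of the covariances. Your bookkeeping remark is slightly imprecise, since $\WW_y$ does not shift the $\nn$-grading by a fixed amount; this does not matter, because the cyclicity step only needs $b_1$ to shift the grading by a fixed amount and each diagonal block of $\rho B$ to be a finite sum, and both hold.
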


\begin{proof}
Put $p_i=|\alpha_i|$, $B=b_2\cdots b_r$, and $s=(-1)^{p_1(p_2+\cdots+p_r)}$.
Define
\[
R=
\sum_{j=2}^r
(-1)^{p_1(p_2+\cdots+p_{j-1})}
b_2\cdots b_{j-1}[b_1,b_j]_{\mathrm s}
b_{j+1}\cdots b_r.
\]
Successively supercommuting $b_1$ past $b_2,\ldots,b_r$ gives the identity
\begin{equation*}
b_1B=sBb_1+R,
\qquad
s\,\Omega_{q,y}(Bb_1)
=\Omega_{q,y}(b_1B)-\Omega_{q,y}(R).
\end{equation*}
On the other hand, if $Z=\Str_{\HH}(\rho)$, then \eqref{eq:centered-trace} and graded cyclicity give
\begin{align*}
\Omega_{q,y}(b_1B)
&=Z^{-1}\Str_{\HH}(\rho b_1B)
=q^{-m_1}Z^{-1}\Str_{\HH}(b_1\rho B)\\*
&=q^{-m_1}s\,Z^{-1}\Str_{\HH}(\rho Bb_1)
=q^{-m_1}s\,\Omega_{q,y}(Bb_1).
\end{align*}
Combining the last two displays and writing $A=\Omega_{q,y}(b_1B)$, we obtain
\[
A=q^{-m_1}\bigl(A-\Omega_{q,y}(R)\bigr),
\qquad
A=\frac{-q^{-m_1}}{1-q^{-m_1}}\Omega_{q,y}(R)
=\frac{\Omega_{q,y}(R)}{1-q^{m_1}},
\]
which is \eqref{eq:graded-wick-recursion}.  Since centering changes a mode only by a scalar,
\begin{equation}\label{eq:comm}
[b_1,b_j]_{\mathrm s}
=[\Nak_{m_1}(\alpha_1),\Nak_{m_j}(\alpha_j)]_{\mathrm s}
=-m_1\delta_{m_j,-m_1}(\alpha_1,\alpha_j).
\end{equation}
For $r=2$ and $(m_1,m_2)=(n,-n)$ this gives
\[
\Omega_{q,y}(b_1b_2)
=-\frac{n}{1-q^n}(\alpha_1,\alpha_2),
\]
whereas $(m_1,m_2)=(-n,n)$ gives
\[
\Omega_{q,y}(b_1b_2)
=\frac{n}{1-q^{-n}}(\alpha_1,\alpha_2)
=-\frac{nq^n}{1-q^n}(\alpha_1,\alpha_2).
\]
These are \eqref{eq:covariance-plusminus} and \eqref{eq:covariance-minusplus}.  Finally, \eqref{eq:graded-wick-recursion} removes two centered modes at each step; together with $\Omega_{q,y}(b_i)=0$, induction gives the asserted vanishing and complete-pairing expansion.
\end{proof}

For a partition $\pi$ of a finite ordered set $I$ into singletons and pairs, order each pair increasingly and order all parts by their least elements.  If the original list is $(a_1,\ldots,a_r)$, write the resulting list as $(a_{\sigma_\pi(1)},\ldots,a_{\sigma_\pi(r)})$.  Applying the super-permutation sign of \eqref{eq:super-normal-ordering} gives
\begin{equation}\label{eq:koszul-sign}
\eps(\pi)
=\varepsilon\!\left(\sigma_\pi;(\alpha_{a_1},\ldots,\alpha_{a_r})\right).
\end{equation}
For a partial pairing, include each unpaired label as a singleton.  The complete-pairing expansion in \Cref{prop:graded-wick-recursion} is therefore the sum of $\eps(\pi)$ times the ordered covariances of its pairs.

\subsection{Normal ordering and pairs within one block}\label{subsec:normal-ordering}

The currents in \Cref{prop:current-reduction} are defined using the super normal ordering of \Cref{subsec:super-heisenberg}. Therefore, in this subsection, we compute \(\Omega_{q,y}\) for super normally ordered products of Nakajima operators.

For homogeneous modes $c_a=\Nak_{m_a}(\alpha_a)$ with $m_a\ne0$, put
$\widetilde c_a=\widetilde\Nak_{m_a}(\alpha_a)$ as in
\eqref{eq:centered-mode}, and write
\begin{equation}\label{eq:centered-covariance}
C(c_a,c_b)=\Omega_{q,y}(\widetilde c_a\widetilde c_b).
\end{equation}
By \Cref{prop:graded-wick-recursion} and \eqref{eq:comm}, this ordered covariance vanishes unless \(m_b=-m_a\); its two nonzero
mode orientations are given by
\eqref{eq:covariance-plusminus}--\eqref{eq:covariance-minusplus}.
Its value at $q=0$, in the indicated operator order, is the
\emph{vacuum contraction}
\begin{equation}\label{eq:vacuum-covariance}
C^0(c_a,c_b)
=
\begin{cases}
-m_a(\alpha_a,\alpha_b),
 &m_a>0,\ m_b=-m_a,\\
0,&\text{otherwise}.
\end{cases}
\end{equation}
Equivalently, $C^0(c_a,c_b)=\la0|c_ac_b|0\ra$: a positive mode
annihilates the vacuum, so this matrix element vanishes unless $m_a>0$
and $m_b=-m_a$, in which case it is the supercommutator
\eqref{eq:heisenberg}.  Thus $C^0$ is the contraction of the pair in
the usual sense, and \eqref{eq:normal-ordering-identity} below is the
statement that normal ordering subtracts the vacuum expectation.

\begin{lemma}[Normal-ordering subtraction]\label[lemma]{lem:normal-ordering-subtraction}
Let $c_1,\ldots,c_r$ be homogeneous Heisenberg modes.
\begin{enumerate}[label=(\roman*),leftmargin=25pt]
\item For a single normally ordered block,
\begin{equation}\label{eq:normal-ordering-identity}
:c_1\cdots c_r:
=
\sum_{\rho}
(-1)^{\#\rho}\eps(\rho)
\left(\prod_{\{a,b\}\in\rho}C^0(c_a,c_b)\right)
\prod_{h\notin V(\rho)}^{\longrightarrow}c_h.
\end{equation}
Here $\rho$ ranges over collections of disjoint pairs $\{a,b\}$ with
$a<b$, $\#\rho$ is the number of pairs, and $V(\rho)$ is their union.
The sign $\eps(\rho)$ is \eqref{eq:koszul-sign} for the partition
obtained by adjoining the unpaired indices as singletons.
\item Suppose $H_1,\ldots,H_N$ are consecutive intervals partitioning
$\{1,\ldots,r\}$, and put
$\mathcal O_i=\prod_{a\in H_i}^{\longrightarrow}c_a$.
Normal order each $\mathcal O_i$ separately, retaining the order of the
blocks.  Then
\begin{equation}\label{eq:normal-ordered-block-trace}
\begin{aligned}
\Omega_{q,y}(:\mathcal O_1:\cdots:\mathcal O_N:)
&=\sum_{\Pi}\eps(\Pi)
\prod_{\{a\}\in\Pi}\mu_{m_a}(\alpha_a)\\
&\qquad\times
\prod_{\{a,b\}\in\Pi_{\mathrm{ext}}}C(c_a,c_b)
\prod_{\{a,b\}\in\Pi_{\mathrm{int}}}(C-C^0)(c_a,c_b).
\end{aligned}
\end{equation}
The sum is over partitions $\Pi$ into singletons and pairs, each pair
written with $a<b$.  The subsets $\Pi_{\mathrm{ext}}$ and
$\Pi_{\mathrm{int}}$ consist of pairs in different $H_i$ and in the same
$H_i$, respectively; the means are those of \eqref{eq:mode-mean}.
\end{enumerate}
\end{lemma}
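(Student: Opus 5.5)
For part (i), I will induct on \(r\), following the usual proof of Wick's theorem for a free field. The cases \(r\le1\) are trivial. The key fact is that \(C^0(c_a,c_b)\) is nonzero only when \(m_a>0\) and \(m_b=-m_a\). In that case \(C^0(c_a,c_b)=[c_a,c_b]_{\mathrm s}\) by \eqref{eq:heisenberg}, and this is exactly the scalar produced when a positive mode is moved to the right past a later negative mode. Accordingly, I will convert the product \(c_1\cdots c_r\) into normal order by a sequence of adjacent super-transpositions. I will expand the ordinary product as
\[
c_1\cdots c_r=\sum_\rho \eps(\rho)\prod C^0(c_a,c_b)\,:\prod_{h\notin V(\rho)}c_h:,
\]
which is the standard Wick identity and can be proved by induction by pulling \(c_1\) through. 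The sign \(\eps(\rho)\) arises as the Koszul sign of bringing each contracted pair together, as in \eqref{eq:koszul-sign}. I will then invert this identity. By Möbius inversion on the poset of partial pairings, which amounts to the identity \(\sum_{\rho\subseteq\rho'}(-1)^{\#\rho}=\delta_{\rho',\emptyset}\), the coefficient \((-1)^{\#\rho}\) appears in \eqref{eq:normal-ordering-identity}. The one thing to check here is that the Koszul signs compose multiplicatively. This holds because the signs are defined through the super-permutation sign of \eqref{eq:super-normal-ordering}, and contracted pairs contribute even scalars. Scalars \((\alpha_a,\alpha_b)\) vanish unless \(|\alpha_a|=|\alpha_b|\), so each contracted pair has total even parity and removing it does not disturb the signs of the remaining factors.

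For part (ii), I will apply (i) to each block \(:\mathcal O_i:\) separately. This writes \(\Omega_{q,y}(:\mathcal O_1:\cdots:\mathcal O_N:)\) as a sum over collections \(\rho=\rho_1\sqcup\cdots\sqcup\rho_N\) of internal vacuum pairings, with weight \((-1)^{\#\rho}\eps(\rho)\prod C^0\). Each term is multiplied by \(\Omega_{q,y}\) of the ordinary product of the surviving modes. Next I will write each surviving mode as \(c_h=\widetilde c_h+\mu_{m_h}(\alpha_h)\) and expand. Since the \(\mu\)'s are scalars, this produces a sum over subsets of singletons carrying the means \(\mu\). The remaining centered modes are evaluated by \Cref{prop:graded-wick-recursion}, giving a sum over complete pairings weighted by ordered covariances \(C\). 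Only even \(\alpha\) have nonzero means, so the singleton means are harmless for the signs as well.

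The final step is a resummation. Fix a final partition \(\Pi\) into singletons and pairs. Each internal pair \(\{a,b\}\in\Pi_{\mathrm{int}}\) arises in two ways: either as a vacuum contraction in \(\rho\), contributing \(-C^0\), or as a covariance pairing, contributing \(C\). Summing the two gives \((C-C^0)(c_a,c_b)\). External pairs can only arise from the covariance expansion, so they carry \(C\). I expect the main obstacle to be the bookkeeping of signs: showing that all three sources of sign (the sign from (i), the sign of extracting singletons, and the sign in the complete-pairing expansion of the remaining centered modes) combine to the single \(\eps(\Pi)\) of \eqref{eq:koszul-sign} for the full partition. I will handle this by noting that each sign is the Koszul sign of a permutation bringing the indicated parts into the canonical order (pairs ordered increasingly, parts ordered by least element). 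Since Koszul signs are multiplicative under composition of permutations and every pair and every nonzero singleton is even, moving even blocks past other factors introduces no extra sign, so the composite sign equals \(\eps(\Pi)\).
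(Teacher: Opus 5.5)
Your proposal is correct. Part (ii) is essentially the paper's Step 4: expand each block by (i), center the surviving modes, apply the complete-pairing expansion, and resum $\sum_{R\subseteq\Pi_{\mathrm{int}}}(-1)^{\#R}\prod_{e\in R}C^0_e\prod_{e\in\Pi_{\mathrm{int}}\setminus R}C_e=\prod_{e\in\Pi_{\mathrm{int}}}(C_e-C^0_e)$. For (i), however, your route differs from the paper's.

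The paper never states the forward Wick expansion. It proves the one-mode insertion identity \eqref{eq:one-mode-into-block}, solves it for $:c_1\cdots c_r:$, and inducts directly on \eqref{eq:normal-ordering-identity}. You instead first prove $c_1\cdots c_r=\sum_\rho\eps(\rho)\prod_{\rho}C^0\,:\prod_{h\notin V(\rho)}c_h:$ for every sublist, and then invert. The inversion works. Substitute the forward expansion of each ordinary product $\prod_{h\notin V(\rho)}c_h$ into the right side of \eqref{eq:normal-ordering-identity}, and use $\eps_S(\rho)\,\eps_{S\setminus V(\rho)}(\rho')=\eps_S(\rho\sqcup\rho')$. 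This is the paper's \eqref{eq:eps-multiplicative}, valid because contracted pairs are even. Everything then collapses through $\sum_{\rho\subseteq\sigma}(-1)^{\#\rho}=\delta_{\sigma,\emptyset}$.

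Your version gives the familiar forward Wick theorem as a by-product and isolates the combinatorics in a clean inclusion--exclusion. The paper's version is one layer shorter.

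Both versions rest on the same key lemma, which you compress into ``pulling $c_1$ through'': the identity \eqref{eq:one-mode-into-block} for a mode times a normally ordered product. In the super setting this step needs the observation that modes of the same sign supercommute. Then a positive $c_1$ only has to cross the negative modes, and each crossing produces one contraction with its Koszul sign. Matching these signs to the normally ordered terms in the original order is exactly what the paper's Step 1 checks, so that step deserves to be written out.

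Likewise, in (ii) you assert that the block-wise signs from (i) multiply to the global $\eps(\rho)$. That uses the fact that the $H_i$ are consecutive intervals, and you should say so.
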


\begin{proof}
Write $c_a=\Nak_{m_a}(\alpha_a)$ and $p_a=|\alpha_a|$.  The Koszul sign
is multiplicative under composition of reorderings, an interchange of
adjacent letters of parities $p,p'$ costing $(-1)^{pp'}$.  Two facts are
used.  \emph{(a)} If $m_a>0$ then $[c_a,c_b]_{\mathrm s}=C^0(c_a,c_b)$
for every $b$, by \eqref{eq:heisenberg} and \eqref{eq:vacuum-covariance};
if $m_a<0$ then $C^0(c_a,c_b)=0$.  \emph{(b)} Both $C(c_a,c_b)$ and
$C^0(c_a,c_b)$ are multiples of $(\alpha_a,\alpha_b)$, so a pair carrying
a nonzero contraction has $\deg\alpha_a+\deg\alpha_b=4$, hence
$p_a=p_b$; it is an even block and crosses any letter freely.  Terms
with a vanishing contraction may be discarded, so \emph{(b)} applies
throughout.  For a list $L$ and $a<b$ in it, let
$\eps_L(a,b)=(-1)^{p_a\sum_{a<h<b}p_h}$, the sign \eqref{eq:koszul-sign}
of the partition of $L$ whose only pair is $\{a,b\}$.

\emph{Step 1.}  For any list $L=(c_1,\ldots,c_r)$,
\begin{equation}\label{eq:one-mode-into-block}
c_1:\!c_2\cdots c_r\!:
\;=\;
:\!c_1c_2\cdots c_r\!:
+\sum_{j=2}^{r}\eps_L(1,j)\,C^0(c_1,c_j)\,
:\!c_2\cdots\widehat{c_j}\cdots c_r\!: .
\end{equation}
Both sides are supersymmetric in $c_2,\ldots,c_r$, so it suffices to
prove them for one order of the tail.  Indeed, interchanging $c_j$ and
$c_{j+1}$ multiplies every normally ordered factor containing both by
$(-1)^{p_jp_{j+1}}$; it exchanges the two summands carrying
$C^0(c_1,c_j)$ and $C^0(c_1,c_{j+1})$, whose normally ordered factors
are unaffected, while their signs change by $(-1)^{p_1p_{j+1}}$ and
$(-1)^{p_1p_j}$, which are again $(-1)^{p_jp_{j+1}}$ because $p_1$
equals the parity of the contracted letter.

Assume then that $c_2,\ldots,c_r$ is normally ordered, with
$c_2,\ldots,c_{k+1}$ the negative modes.  If $m_1<0$, the list
$c_1,\ldots,c_r$ is normally ordered as well and every $C^0(c_1,c_j)$
vanishes by \emph{(a)}, so both sides equal $c_1\cdots c_r$.  If
$m_1>0$, then $c_1c_j=(-1)^{p_1p_j}c_jc_1+C^0(c_1,c_j)$ for
$j\leq k+1$ by \emph{(a)}, and moving $c_1$ across $c_2\cdots c_{k+1}$
one letter at a time gives
\[
c_1c_2\cdots c_r
=(-1)^{p_1(p_2+\cdots+p_{k+1})}c_2\cdots c_{k+1}c_1c_{k+2}\cdots c_r
+\sum_{j=2}^{k+1}\eps_L(1,j)\,C^0(c_1,c_j)\,c_2\cdots\widehat{c_j}\cdots c_r .
\]
The first term on the right is $:\!c_1\cdots c_r\!:$, each product in the
sum is normally ordered, and $C^0(c_1,c_j)=0$ for $j>k+1$ by
\eqref{eq:vacuum-covariance}.

\emph{Step 2.}  Sorting into the order of \eqref{eq:koszul-sign} moves
whole parts, and a pair is an even block.  Let $\rho$ be a set of
disjoint pairs in $\{1,\ldots,r\}$, let $B$ be the increasing list of the
remaining indices, and let $\pi'$ be a partition of $B$.  Sorting into
$\rho$-order and then sorting the sublist $B$, which has kept its
relative order, into $\pi'$-order yields an arrangement of the parts of
$\rho\sqcup\pi'$ whose singletons increase, as they do in the
$(\rho\sqcup\pi')$-order; the two differ by a permutation of even
blocks, so
\begin{equation}\label{eq:eps-multiplicative}
\eps(\rho\sqcup\pi')=\eps(\rho)\,\eps_B(\pi').
\end{equation}

\emph{Step 3.}  We prove \eqref{eq:normal-ordering-identity} by induction
on $r$, the cases $r\leq1$ being trivial.  Solve
\eqref{eq:one-mode-into-block} for $:\!c_1\cdots c_r\!:$ and expand each
normally ordered factor on the right by the inductive hypothesis.  The
term $c_1:\!c_2\cdots c_r\!:$ supplies the terms of
\eqref{eq:normal-ordering-identity} in which $1$ is unpaired: writing
$\rho'$ for a set of disjoint pairs in $\{2,\ldots,r\}$, the part
$\{1\}$ comes first, so $c_1$ crosses nothing and $\eps(\rho')$ is the
required sign.  The $j$-th correction term supplies those with
$\{1,j\}\in\rho$: for a set $\rho''$ of disjoint pairs in
$B_j=\{2,\ldots,r\}\setminus\{j\}$ the contraction $C^0(c_1,c_j)$ raises
the number of pairs by one, matching the sign $-(-1)^{\#\rho''}$, and
$\eps_L(1,j)\,\eps_{B_j}(\rho'')=\eps\bigl(\{\{1,j\}\}\sqcup\rho''\bigr)$
by \eqref{eq:eps-multiplicative}.  Every $\rho$ arises exactly once.  For
$r=2$ the identity reads $:\!c_1c_2\!:\,=c_1c_2-C^0(c_1,c_2)$.

\emph{Step 4.}  For (ii), apply (i) within each $H_i$ and let
$R\subseteq\Pi_{\mathrm{int}}$ be the pairs so removed, of weight
$-C^0$; as the $H_i$ are consecutive intervals, the block signs multiply
to $\eps(R)$.  Center the surviving modes by \eqref{eq:centered-mode} and
apply \Cref{prop:graded-wick-recursion}, which pairs them with weight $C$
and leaves singletons contributing the means \eqref{eq:mode-mean},
nonzero only for even $\alpha_a$ and so extracted without a sign; a pair
in two different $H_i$ can arise only here.  This produces the remaining
parts with the sign $\eps_B(\Pi\setminus R)$, $B$ being the complement of
$V(R)$, so the total sign is $\eps(\Pi)$ by
\eqref{eq:eps-multiplicative}, independent of $R$.  Writing
$C_e=C(c_a,c_b)$ and $C^0_e=C^0(c_a,c_b)$ for $e=\{a,b\}$, the terms
with a given $\Pi$ therefore sum to
\[
\begin{aligned}
&\eps(\Pi)\prod_{\{a\}\in\Pi}\mu_{m_a}(\alpha_a)
\prod_{e\in\Pi_{\mathrm{ext}}}C_e\\
&\qquad\times\sum_{R\subseteq\Pi_{\mathrm{int}}}(-1)^{\#R}
\prod_{e\in R}C^0_e\prod_{e\in\Pi_{\mathrm{int}}\setminus R}C_e ,
\end{aligned}
\]
and the last sum equals $\prod_{e\in\Pi_{\mathrm{int}}}(C_e-C^0_e)$,
proving (ii).
\end{proof}

For $n>0$, \eqref{eq:covariance-plusminus}--\eqref{eq:vacuum-covariance}
give
\[
(C-C^0)(\Nak_{\pm n}(\alpha),\Nak_{\mp n}(\beta))
=-\frac{nq^n}{1-q^n}(\alpha,\beta).
\]
Thus the two mode orientations have the same covariance.  With the even
derivative orders used below, their sum supplies the factor $2$ in the
loop kernel \eqref{eq:loop-kernel}.

\subsection{The scalar kernels and the chamber}\label{subsec:scalar-kernels}

Here, we introduce some (quasi-elliptic) functions that will appear in the main theorem of this section, \Cref{thm:wick-current}. Write
\[
q=e^{2\pi i\tau},\qquad x_i=e^{2\pi iz_i},\qquad y=x_0=e^{2\pi iz_0},
\qquad \Im\tau>0.
\]
We work in the nested chamber
\begin{equation}\label{eq:nested-chamber}
0\leq\Im z_0<\Im z_1<\cdots<\Im z_N<\Im\tau.
\end{equation}
In multiplicative coordinates, these inequalities give
\begin{equation}\label{eq:nested-annuli}
|q|<\left|\frac{x_j}{x_i}\right|<1
\qquad(0\leq i<j\leq N).
\end{equation}
The parameter $y$ of the vertex operator is thus the additional variable
$x_0$.  Let
\[
\Theta(z;\tau)=\sum_{k\in\Z}(-1)^k
q^{(k+1/2)^2/2}e^{2\pi i(k+1/2)z},
\qquad D_z=\frac1{2\pi i}\frac{\partial}{\partial z}.
\]
We use the normalization of Goujard--M\"oller
~\cite[Section~5.2, equations~(40)--(41)]{GoujardMoller2020}:
\begin{equation}\label{eq:ZP-identification}
Z(z)=-\frac{D_z\Theta(z;\tau)}{\Theta(z;\tau)},
\qquad \zhat(z)=Z(z)-\frac12,
\qquad P(z)=D_zZ(z).
\end{equation}
We suppress $\tau$ and, when $u=e^{2\pi iz}$, also write
$\zhat(u)$ and $P(u)$ for these periodic functions.  Their Fourier
expansions in $|q|<|u|<1$ are
\begin{align}
\zhat(u)
&=\sum_{n\geq1}
\left(\frac{u^n}{1-q^n}-\frac{q^nu^{-n}}{1-q^n}\right),
\label{eq:Zhat-Fourier}\\
P(u)
&=\sum_{n\geq1}n
\left(\frac{u^n}{1-q^n}+\frac{q^nu^{-n}}{1-q^n}\right).
\label{eq:P-Fourier}
\end{align}
Here $D_z$ agrees with $D_u=u\partial_u$ in multiplicative coordinates.

For $s\geq2$, put
\begin{equation}\label{eq:loop-series-def}
T_s(q)=\sum_{n\geq1}n^{s-1}\frac{q^n}{1-q^n}.
\end{equation}
When $s$ is even,
\begin{equation}\label{eq:loop-series-Eisenstein}
T_s(q)=\frac{B_s}{2s}\bigl(1-E_s(q)\bigr).
\end{equation}
Here $B_s$ is the Bernoulli number defined by
$t/(e^t-1)=\sum_{s\geq0}B_st^s/s!$, and $E_s$ has constant
coefficient $1$.

For an even nonnegative integer $d$, set
\begin{equation}\label{eq:derived-field}
\Afield^{(d)}(\alpha;x)=D_x^d\Afield(\alpha;x)
\end{equation}
(cf. \eqref{eq:heisenberg-field}). Only $d=0$ and $d=2$ will occur in the application.

We interpret finite-mode traces coefficientwise on the finite-dimensional
summands $H^*(X^{[n]})$ of $\HH$.  For current products, the following lemmas
justify summing those traces over the modes in the chamber
\eqref{eq:nested-annuli}.

\begin{lemma}[Normal convergence]\label[lemma]{lem:normal-convergence}
Every fixed power of $D_u$ applied to \eqref{eq:Zhat-Fourier} or
\eqref{eq:P-Fourier}, and every series $T_s$ in
\eqref{eq:loop-series-def}, converges absolutely and uniformly on compact
subsets of \eqref{eq:nested-annuli}.
\end{lemma}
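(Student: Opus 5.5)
The plan is to reduce everything to the Weierstrass M-test on geometric series in the variables $u$ and $q$, where $u$ ranges over the ratios $x_j/x_i$ (or $x_0=y$ itself) permitted by \eqref{eq:nested-annuli}.

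First fix a compact set $K$ in the region \eqref{eq:nested-annuli}. Since $K$ is compact and the inequalities defining the region are strict, there are constants $0<r<1$ and $\delta>0$ with $|q|\leq r$ on $K$, and with
\[
|u|\leq 1-\delta,\qquad |q/u|\leq 1-\delta
\]
for every ratio $u=x_j/x_i$ with $i<j$ that is evaluated on $K$. For the variable $y=x_0$ the condition $0\leq \Im z_0$ may allow $|y|=1$. In that case the relevant arguments of $\zhat$ and $P$ are still ratios lying strictly inside $|q|<|u|<1$, so the same bounds apply to whatever arguments actually occur.

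Next, apply $D_u^k$ termwise to \eqref{eq:Zhat-Fourier} and \eqref{eq:P-Fourier}. Each term becomes $\pm n^{k}$ or $n^{k+1}$ times
\[
\frac{u^n}{1-q^n}\quad\text{or}\quad \frac{q^nu^{-n}}{1-q^n}.
\]
On $K$ we have $|1-q^n|\geq 1-r$. Together with the bounds above, the $n$-th term is therefore bounded by a constant times $n^{k+1}(1-\delta)^n$. The series $\sum_n n^{k+1}(1-\delta)^n$ converges, so the M-test gives absolute and uniform convergence on $K$. Since every term is holomorphic, the termwise differentiation is justified a posteriori by Weierstrass's theorem on uniform limits of holomorphic functions.

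For $T_s$, the $n$-th term of \eqref{eq:loop-series-def} is bounded by $n^{s-1}r^n/(1-r)$. This is again summable, so the M-test applies in the same way.

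The only delicate point is bookkeeping. One has to check that every argument at which $\zhat$, $P$ or their derivatives are evaluated is a ratio with strict inequalities on both sides. In particular, the boundary case $\Im z_0=0$ allowed in \eqref{eq:nested-chamber} must never occur as a lone argument on the circle $|u|=1$. I would state the lemma for $u$ in compact subsets of the open annulus $|q|<|u|<1$, uniformly in $q$ on compacta of the unit disc, and then note that each kernel appearing later is evaluated at such a ratio.
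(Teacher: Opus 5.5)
Your proposal is correct and is essentially the paper's argument. On a compact set you fix $|q|\le r<1$ and uniform bounds $|u|\le a<1$, $|q/u|\le b<1$ for every ratio $u=x_j/x_i$; you then bound the $n$-th term by $n^h(a^n+b^n)/(1-r)$ or $n^h r^n/(1-r)$ and apply the M-test, using that $D_u$ only raises the power of $n$. Your extra remark is also consistent with how the paper uses the lemma: the singleton kernels are evaluated at ratios $x_i/y$ with $i\ge1$, so a possible boundary value $|y|=1$ never arises as an argument.
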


\begin{proof}
On such a compact set there are constants $r,a,b<1$ such that
$|q|\leq r$, $|u|\leq a$, and $|q/u|\leq b$ for every ratio
$u=x_j/x_i$ with $i<j$.  For each integer $h\geq0$,
\[
\frac{n^h(|u|^n+|q/u|^n)}{|1-q^n|}
\leq\frac{n^h(a^n+b^n)}{1-r},
\qquad
\frac{n^h|q|^n}{|1-q^n|}
\leq\frac{n^hr^n}{1-r}.
\]
Both majorants are summable over $n\geq1$, and applying $D_u$ only
increases $h$.
\end{proof}

\begin{lemma}
\label[lemma]{lem:mode-trace-limit}
For a product of separately normal-ordered blocks of the fields
\eqref{eq:derived-field}, the mode sum defining $\Omega_{q,y}$ converges
absolutely and locally uniformly in \eqref{eq:nested-annuli}.  In
particular, finite K\"unneth sums and Laurent coefficient extraction on
chamber contours commute with it.
\end{lemma}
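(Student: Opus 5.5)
The plan is to expand the normalized trace of a product of separately normal-ordered blocks using \Cref{lem:normal-ordering-subtraction}(ii). Each block is a current built from the fields \eqref{eq:derived-field} at one point $x_i$, with $x_0=y$ reserved for the vertex operator. Fix the Künneth decompositions, which are finite sums. For each fixed choice of modes $(m_a)$, the finite-mode trace on each summand $H^*(X^{[n]})$ is then a finite sum over partitions $\Pi$ into singletons and pairs. Each term is a product of three kinds of factor: means $\mu_{m_a}(\alpha_a)$, external covariances $C(c_a,c_b)$ and internal subtracted covariances $(C-C^0)(c_a,c_b)$. Since the number of partitions depends only on the number of fields and not on the modes, it suffices to prove absolute and locally uniform convergence of the mode sum separately for each fixed partition type $\Pi$. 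We then sum finitely many such series.

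For a fixed $\Pi$, the mode sum factorizes as a product over the parts of $\Pi$. This is because each mode index appears in exactly one part, and the monomial $x_i^{-m_a}$, times a polynomial weight $m_a^{d_a}$ from $D_x^d$, is attached to the mode $a$ sitting in block $i$.

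The three kinds of part are handled as follows.
\begin{itemize}
\item A singleton $\{a\}$ in block $i$ contributes $\sum_{m\neq0} m^{d}\,\frac{y^m}{1-q^{-m}}x_i^{-m}\int_X\alpha_a$. Splitting into $m>0$ and $m<0$, this becomes a series of the form $\sum_{n\geq1} n^d\bigl(\frac{q^n(y/x_i)^n}{1-q^n}\mp\frac{(x_i/y)^n}{1-q^n}\bigr)$. Up to sign and index conventions, this is $D^d$ applied to the Fourier series \eqref{eq:Zhat-Fourier} at the ratio $u=x_i/y=x_i/x_0$, which lies in \eqref{eq:nested-annuli} since $0<i$.
\item An external pair $\{a,b\}$ with $a$ in block $i$ and $b$ in block $j$, $i<j$, contributes by \eqref{eq:covariance-plusminus}--\eqref{eq:covariance-minusplus}
\[
\sum_{n\geq1}n^{1+d_a+d_b}\Bigl(\frac{(x_j/x_i)^n}{1-q^n}+\frac{q^n(x_i/x_j)^n}{1-q^n}\Bigr)
\]
up to sign and the pairing $(\alpha_a,\alpha_b)$. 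This is a $D_u$-derivative of \eqref{eq:P-Fourier} at $u=x_j/x_i$, again in the chamber.
\item An internal pair has both modes in the same block, so the monomials cancel. It contributes $\sum_n n^{1+d_a+d_b}\frac{2q^n}{1-q^n}$, which is $2T_s$ with $s=2+d_a+d_b$.
\end{itemize}

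In each case the modulus of the summand is bounded by the majorants of \Cref{lem:normal-convergence}. Hence the product of the absolute series is finite and locally uniformly bounded on compacts of \eqref{eq:nested-annuli}, and Fubini/Tonelli gives absolute convergence of the full multiple mode sum, with the Weierstrass $M$-test giving local uniformity.

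The consequences follow from this. Finite Künneth sums obviously commute with the sum. For Laurent coefficient extraction, the constant term in $x_i$ is a contour integral over a circle $|x_i|=\text{const}$ contained in the chamber, with the other variables fixed. Uniform convergence on that compact contour lets us exchange integral and mode sum.

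The main obstacle is justifying that the formal expression $\Omega_{q,y}$ of a product of infinite mode sums equals this sum of termwise traces. The supertrace on $\HH$ is itself a $q$-series over $n$, and products of unbounded mode sums are not obviously trace class. I would handle this by working coefficientwise in $q$. For a fixed power $q^n$, and each fixed monomial in the $x_i$, only finitely many mode tuples give a nonzero contribution on $H^*(X^{[n]})$, because modes change the number grading. So the formal trace is a well-defined formal series whose coefficients agree with the termwise sum. Its analytic continuation is then identified with the absolutely convergent sum constructed above, by the uniqueness of Laurent expansions in the annuli.
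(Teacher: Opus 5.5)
Your proposal is correct and is essentially the paper's argument. You expand each mode monomial by \Cref{lem:normal-ordering-subtraction}(ii) into finitely many partitions, factor the mode sum over the parts, and dominate singletons, intervertex pairs and loops by the two majorants of \Cref{lem:normal-convergence}; you then use finiteness of K\"unneth sums and termwise integration on chamber contours. Your final paragraph, identifying the coefficientwise formal trace with the analytic sum, goes beyond this lemma: the paper handles that point separately in \Cref{lem:trace-CT-interchange} via a finite mode cutoff.
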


\begin{proof}
Each mode monomial is a product of finitely many modes, so the
coefficientwise trace identities of \Cref{lem:shifted-trace},
\Cref{prop:graded-wick-recursion}, and
\Cref{lem:normal-ordering-subtraction} apply to it and express its
normalized trace as a finite sum over partitions into singletons and
pairs.  Up to fixed factors depending on the cohomology classes, a
singleton is bounded by the first majorant in
\Cref{lem:normal-convergence} with $u=x_i/y$, a pair between distinct
blocks by the same majorant with $u=x_j/x_i$, and a pair within one
block, of covariance $C-C^0$, by the second majorant; the last two carry
one additional power of $n$.  The sums over the disjoint parts of a
partition therefore converge absolutely and uniformly, and there are
finitely many partitions.  K\"unneth expansions are finite because $H$
is finite dimensional, and uniform convergence on the coefficient
contours permits termwise integration.
\end{proof}

\subsection{The Wick expansion for currents}

Finally, we compute \(\Omega_{q,y}\) for operator-valued currents via the following main theorem of this section.

\begin{theorem}[Graded Wick theorem for the normalized trace]\label{thm:wick-current}
For $i=1,\ldots,N$, let $H_i$ be a finite ordered set.  Attach to every $a\in H_i$ a homogeneous class $\alpha_a\in H$ and an even integer $d_a\geq0$, and define the normally ordered block
\begin{equation}\label{eq:normal-current-block}
\mathcal O_i(x_i)
=:
\prod_{a\in \mathsf H_i}^{\longrightarrow}
\Afield^{(d_a)}(\alpha_a;x_i):.
\end{equation}
In the chamber \eqref{eq:nested-chamber},
\begin{equation}\label{eq:wick-current-formula}
\Omega_{q,y}\!\left(
\mathcal O_1(x_1)\cdots\mathcal O_N(x_N)
\right)
=
\sum_{\Pi}\eps(\Pi)
\prod_{\{a\}\in\Pi}M_a
\prod_{\substack{\{a,b\}\in\Pi\\i(a)\neq i(b)}}C_{ab}
\prod_{\substack{\{a,b\}\in\Pi\\i(a)=i(b)}}L_{ab},
\end{equation}
where $\Pi$ runs over partitions of $\bigsqcup_i\mathsf H_i$ into singletons and pairs.  The factors are
\begin{align}
M_a
&=
\left(\int_X\alpha_a\right)
D_u^{d_a}\zhat(u)\big|_{u=x_{i(a)}/y},
\label{eq:singleton-kernel}\\*
C_{ab}
&=-
(\alpha_a,\alpha_b)
D_u^{d_a+d_b}P(u)\big|_{u=x_{j}/x_i}
\quad\text{if }a\in H_i,\ b\in H_j,\ i<j,
\label{eq:cross-kernel}\\*
L_{ab}
&=-2(\alpha_a,\alpha_b)
T_{d_a+d_b+2}(q)
\quad\text{if }a,b\in \mathsf H_i.
\label{eq:loop-kernel}
\end{align}
The sign $\eps(\Pi)$ is defined in \eqref{eq:koszul-sign}, using the concatenated order $H_1,\ldots,H_N$; $i(a)$ is the index of the set containing $a$.
\end{theorem}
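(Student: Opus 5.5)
The plan is to expand every field into modes and apply \Cref{lem:normal-ordering-subtraction}(ii) to each mode monomial. By \Cref{lem:mode-trace-limit}, the resulting mode sums may then be exchanged with the finite sum over partitions $\Pi$. Concretely, $\Afield^{(d_a)}(\alpha_a;x_i)=\sum_{m\neq0}m^{d_a}\Nak_m(\alpha_a)x_i^{-m}$, using $D_x^dx^{-m}=(-m)^dx^{-m}=m^dx^{-m}$ since $d$ is even. Thus $\mathcal O_1(x_1)\cdots\mathcal O_N(x_N)$ is a sum over mode assignments $(m_a)$ of separately normal-ordered blocks $:\mathcal O_1:\cdots:\mathcal O_N:$ in the sense of \Cref{lem:normal-ordering-subtraction}(ii). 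The weights are $\prod_a m_a^{d_a}x_{i(a)}^{-m_a}$, and the blocks $H_i$ are consecutive intervals in the concatenated order. For fixed $\Pi$, the summand from \eqref{eq:normal-ordered-block-trace} factorizes over the parts of $\Pi$, and the Koszul sign $\eps(\Pi)$ does not depend on the modes. So the mode sum factorizes into a product of independent one- or two-index sums, and it remains to evaluate each factor as $M_a$, $C_{ab}$, or $L_{ab}$.

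\emph{Singletons.} Using \eqref{eq:mode-mean}, the factor is $\int_X\alpha_a\sum_{m\neq0}m^{d_a}\frac{y^m}{1-q^{-m}}x^{-m}$ with $x=x_{i(a)}$. Setting $u=x/y$ and splitting into $m=-n<0$ and $m=n>0$ gives $\sum_{n\ge1}n^{d}\bigl(\frac{u^n}{1-q^n}-\frac{q^nu^{-n}}{1-q^n}\bigr)$, since $\frac{1}{1-q^{-n}}=-\frac{q^n}{1-q^n}$. This is $D_u^{d}\zhat(u)$ by \eqref{eq:Zhat-Fourier}; because $d$ is even, the factor $(-n)^d$ on the $u^{-n}$ terms equals $n^d$. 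This yields \eqref{eq:singleton-kernel}. Convergence needs $|q|<|x/y|<1$, which is the chamber condition with index $0$.

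\emph{External pairs.} Take $a\in H_i$, $b\in H_j$ with $i<j$, so $a$ precedes $b$. Only the modes $m_b=-m_a$ survive. By \eqref{eq:covariance-plusminus}--\eqref{eq:covariance-minusplus}, the sum is $-(\alpha_a,\alpha_b)\sum_{n\ge1}n^{1+d_a+d_b}\bigl(\frac{(x_j/x_i)^{n}}{1-q^n}+\frac{q^n(x_j/x_i)^{-n}}{1-q^n}\bigr)$. Here $m_a=n>0$ gives $x_i^{-n}x_j^{n}$ with covariance $-n/(1-q^n)$, and $m_a=-n$ gives $-nq^n/(1-q^n)$. Comparing with $D_u^{d_a+d_b}$ of \eqref{eq:P-Fourier}, again using evenness, gives \eqref{eq:cross-kernel}. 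I expect this step to be the main point requiring care: one must check the orientation $u=x_j/x_i$ rather than its inverse. The ordering of the blocks fixes which of \eqref{eq:covariance-plusminus} or \eqref{eq:covariance-minusplus} multiplies which power, and the chamber \eqref{eq:nested-annuli} is exactly what makes the geometric series converge.

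\emph{Internal pairs.} Both fields sit at the same $x_i$, so $x_i^{-m_a-m_b}=1$. By the remark after \Cref{lem:normal-ordering-subtraction}, both orientations contribute $(C-C^0)=-\frac{nq^n}{1-q^n}(\alpha_a,\alpha_b)$, weighted by $n^{d_a+d_b}$ since the weights are even. Summing over $m_a=\pm n$ gives $-2(\alpha_a,\alpha_b)\sum_n n^{d_a+d_b+1}\frac{q^n}{1-q^n}=L_{ab}$, as in \eqref{eq:loop-series-def}. Finally, \Cref{lem:normal-convergence} and \Cref{lem:mode-trace-limit} justify the absolute convergence and the interchange of the infinite mode sums with the finite sum over $\Pi$. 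Assembling the three kinds of factors gives \eqref{eq:wick-current-formula}.
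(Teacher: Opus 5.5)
Your proposal is correct and follows essentially the same route as the paper: expand each field into modes, apply \Cref{lem:normal-ordering-subtraction}(ii) to each fixed mode assignment, and sum the factorized singleton, intervertex, and loop contributions to obtain $D_u^{d_a}\zhat$, $-D_u^{d_a+d_b}P$, and $-2T_{d_a+d_b+2}$, with \Cref{lem:mode-trace-limit} justifying the interchange of mode sums. The points you single out are the same ones the paper's proof rests on: evenness of the $d_a$ to identify $(-n)^{d}$ with $n^{d}$, and the orientation $u=x_j/x_i$ forced by the block order.
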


\begin{proof}
\emph{(i) Fixed modes.}
For $a\in H_i$, expand
\[
\Afield^{(d_a)}(\alpha_a;x_i)
=\sum_{m_a\neq0}(-m_a)^{d_a}x_i^{-m_a}
\Nak_{m_a}(\alpha_a).
\]
For a fixed assignment $(m_a)_a$, apply
\eqref{eq:normal-ordered-block-trace} with
$c_a=\Nak_{m_a}(\alpha_a)$.  Multiply by
$\prod_a(-m_a)^{d_a}x_{i(a)}^{-m_a}$ and sum over the modes.
The summation factors over the singletons and pairs of each partition.
\Cref{lem:mode-trace-limit} permits this passage from finite-mode
traces to current traces.

\emph{(ii) Singletons.}
For $a\in H_i$, put $u=x_i/y$.  The mean in
\Cref{lem:shifted-trace} gives, since $d_a$ is even,
\[
\sum_{m\neq0}(-m)^{d_a}x_i^{-m}\mu_m(\alpha_a)
=\left(\int_X\alpha_a\right)
\sum_{n>0}n^{d_a}
\left(\frac{u^n}{1-q^n}-\frac{q^nu^{-n}}{1-q^n}\right).
\]
By \eqref{eq:Zhat-Fourier}, this is $M_a$ in
\eqref{eq:singleton-kernel}.

\emph{(iii) Pairs in different blocks.}
Let $a\in H_i$, $b\in H_j$ with $i<j$, and set
$u=x_j/x_i$ and $s=d_a+d_b$.  Only opposite mode indices contribute.
Using \eqref{eq:covariance-plusminus}--\eqref{eq:covariance-minusplus},
the assignments $(m_a,m_b)=(n,-n)$ and $(-n,n)$ give, respectively,
\[
-(\alpha_a,\alpha_b)\frac{n^{s+1}u^n}{1-q^n},
\qquad
-(\alpha_a,\alpha_b)\frac{n^{s+1}q^nu^{-n}}{1-q^n}.
\]
Their sum over $n>0$ is
$-(\alpha_a,\alpha_b)D_u^sP(u)$, which is
\eqref{eq:cross-kernel}.

\emph{(iv) Pairs in one block and assembly.}
For $a,b\in H_i$, the covariance is $C-C^0$.  In either orientation,
\[
(C-C^0)(\Nak_{\pm n}(\alpha_a),\Nak_{\mp n}(\alpha_b))
=-(\alpha_a,\alpha_b)\frac{nq^n}{1-q^n}.
\]
The powers of $x_i$ cancel, and the even derivative orders multiply
each term by $n^{d_a+d_b}$.  The two orientations therefore sum to
\[
-2(\alpha_a,\alpha_b)
\sum_{n>0}n^{d_a+d_b+1}\frac{q^n}{1-q^n}
=-2(\alpha_a,\alpha_b)T_{d_a+d_b+2}(q),
\]
as in \eqref{eq:loop-kernel}.  Substituting the three evaluated sums
into \eqref{eq:normal-ordered-block-trace} proves
\eqref{eq:wick-current-formula}, with the sign \eqref{eq:koszul-sign}.
\end{proof}

\subsection{Cohomological contributions of Wick graphs}\label{subsec:wick-graphs}

\Cref{thm:wick-current} is sufficient to prove quasimodularity. However, to compute the reduced generating function explicitly, as we will do in \Cref{sec:algorithm} and \Cref{sec:checks}, we provide a more convenient formulation in \Cref{cor:basis-free-graphs} and \Cref{cor:small-diagonal-cohomological-contribution}. We first introduce Wick graphs, which are combinatorial gadgets that keep track of the summation indices in \eqref{eq:wick-current-formula}.

\begin{definition}\label[definition]{def:wick-graph}
Fix the ordered current blocks of \Cref{thm:wick-current}.  Represent the $i$-th block by a vertex $i$, and call each field occurrence $a\in H_i$ a \emph{labeled half-edge} attached to that vertex.  A \emph{Wick graph} $\Gamma$ is a partition $\Pi_\Gamma$ of $\bigsqcup_iH_i$ into singletons and pairs, together with the attachment map $a\mapsto i(a)$.  A singleton is an unpaired half-edge; a pair joining distinct vertices is an \emph{intervertex edge}, and a pair at one vertex is a \emph{loop}.

The scalar function associated with $\Gamma$ is
\[
\begin{aligned}
\mathscr F_\Gamma(z_0,\ldots,z_N;\tau)
={}&
\prod_{\{a\}\in\Pi_\Gamma}
D^{d_a}\zhat(z_{i(a)}-z_0)\\
&\cdot
\prod_{\substack{\{a,b\}\in\Pi_\Gamma\\i(a)<i(b)}}
\bigl(-D^{d_a+d_b}P(z_{i(b)}-z_{i(a)})\bigr)
\cdot
\prod_{\substack{\{a,b\}\in\Pi_\Gamma\\i(a)=i(b)}}
\bigl(-2T_{d_a+d_b+2}(q)\bigr),
\end{aligned}
\]
where $D$ differentiates the displayed difference variable.  This is the scalar part of the contribution of $\Gamma$ to \eqref{eq:wick-current-formula}; its cohomological part is defined in \Cref{def:cohomological-contribution}.
\end{definition}

While \Cref{thm:wick-current} is stated for individual cohomology labels, \Cref{cor:basis-free-graphs} combines these contributions into an expression independent of the choice of K\"unneth decomposition. Let $T_i\in H^{\otimes r_i}$ and choose a decomposition
\[
T_i=\sum_\nu\beta_{\nu,1}\otimes\cdots\otimes\beta_{\nu,r_i}
\]
with homogeneous factors.  For even derivative orders
$\mathbf d_i=(d_{i,1},\ldots,d_{i,r_i})$, define
\begin{equation}\label{eq:tensor-current}
\mathcal J(T_i,\mathbf d_i;x_i)
=\frac1{r_i!}\sum_\nu
:\prod_{h=1}^{r_i}
\Afield^{(d_{i,h})}(\beta_{\nu,h};x_i):.
\end{equation}
As in \eqref{eq:diagonal-operator}, multilinearity in the ordered
tensor factors makes this definition independent of the decomposition;
all normal ordering uses \eqref{eq:super-normal-ordering}.

\begin{definition}[Cohomological contribution]\label[definition]{def:cohomological-contribution}
Let $\Gamma$ be a Wick graph on the ordered half-edge set
$H_1\sqcup\cdots\sqcup H_N$, where $|H_i|=r_i$.  Order the parts of
$\Pi_\Gamma$ by their least half-edge and order the two elements of every
pair increasingly, as in \Cref{thm:wick-current}.  The super-permutation
that moves the tensor factors from the original half-edge order to this
order by parts defines
\[
\Sigma_\Gamma:
\bigotimes_{i=1}^N H^{\otimes r_i}
\longrightarrow
\bigotimes_{B\in\Pi_\Gamma}H^{\otimes |B|}.
\]
For a part $B\in\Pi_\Gamma$, define a linear functional
\[
\lambda_B=
\begin{cases}
\displaystyle\int_X:H\longrightarrow\Q,&B=\{a\},\\[2mm]
(\ ,\ ):H\otimes H\longrightarrow\Q,&B=\{a,b\}.
\end{cases}
\]
The \emph{cohomological contribution} of $\Gamma$ is the scalar
\begin{equation}\label{eq:cohomological-contribution}
\mathcal I_\Gamma(T_1,\ldots,T_N)
=
\left(\bigotimes_{B\in\Pi_\Gamma}\lambda_B\right)
\Sigma_\Gamma(T_1\otimes\cdots\otimes T_N).
\end{equation}
The sign of the super-permutation in \eqref{eq:cohomological-contribution}
is precisely the Koszul sign in \eqref{eq:wick-current-formula}.  If
$T_i=\tau_{r_i*}\gamma_i$, we abbreviate the left-hand side by
$\mathcal I_\Gamma(\gamma_1,\ldots,\gamma_N)$.
\end{definition}

\begin{corollary}[Graph expansion with cohomology labels]\label[corollary]{cor:basis-free-graphs}
With $x_i=e^{2\pi iz_i}$ and $y=e^{2\pi iz_0}$, one has
\begin{equation}\label{eq:tensor-graph-expansion}
\Omega_{q,y}\!\left(
\prod_{i=1}^N\mathcal J(T_i,\mathbf d_i;x_i)
\right)
=\frac1{\prod_{i=1}^Nr_i!}
\sum_\Gamma
\mathcal I_\Gamma(T_1,\ldots,T_N)
\mathscr F_\Gamma(z_0,\ldots,z_N;\tau),
\end{equation}
where the sum runs over the Wick graphs of \Cref{def:wick-graph}.
\end{corollary}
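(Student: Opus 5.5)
The plan is to choose a homogeneous Künneth decomposition for each $T_i$, expand both sides of \eqref{eq:tensor-graph-expansion} multilinearly in the summation indices $\nu_1,\ldots,\nu_N$, and compare term by term. By \eqref{eq:tensor-current} and linearity of $\Omega_{q,y}$, the left side equals
\[
\frac1{\prod_ir_i!}\sum_{\nu_1,\ldots,\nu_N}\Omega_{q,y}\Bigl(\prod_{i=1}^N:\prod_{h=1}^{r_i}\Afield^{(d_{i,h})}(\beta_{\nu_i,h};x_i):\Bigr).
\]
\Cref{lem:mode-trace-limit} justifies exchanging the finite Künneth sum with the mode sums defining the trace.

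For each fixed tuple $(\nu_i)$ we apply \Cref{thm:wick-current} with $H_i=\{1,\ldots,r_i\}$ (in the given order), labels $\alpha_{(i,h)}=\beta_{\nu_i,h}$, and derivative orders $d_{(i,h)}=d_{i,h}$. This yields a sum over partitions $\Pi$, which are exactly the Wick graphs $\Gamma$. Each summand is $\eps(\Pi)$ times a product of three kinds of factors:
\begin{itemize}
\item[] for singletons, $\int_X\alpha_a\cdot D^{d_a}\zhat$;
\item[] for intervertex edges, $-(\alpha_a,\alpha_b)D^{d_a+d_b}P$;
\item[] for loops, $-2(\alpha_a,\alpha_b)T_{d_a+d_b+2}$.
\end{itemize}
The cohomological scalars and the analytic factors separate. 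The analytic factors multiply to $\mathscr F_\Gamma$ as in \Cref{def:wick-graph}: under $x_i=e^{2\pi iz_i}$, the ratios $x_i/y$ and $x_j/x_i$ become $z_i-z_0$ and $z_j-z_i$, and $D_u=D_z$. The remaining scalar is $\eps(\Pi)\prod_{\{a\}}\int_X\alpha_a\prod_{\{a,b\}}(\alpha_a,\alpha_b)$. The claim therefore reduces to showing that, after summing over $(\nu_i)$, this scalar equals $\mathcal I_\Gamma(T_1,\ldots,T_N)$.

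For this identification, $\Sigma_\Gamma$ applied to the pure tensor $\bigotimes_{i,h}\beta_{\nu_i,h}$ reorders the factors into the part order of \eqref{eq:koszul-sign}. In the symmetric monoidal category of super-vector spaces, this produces precisely the Koszul sign $\varepsilon(\sigma_\Pi;\boldsymbol\alpha)=\eps(\Pi)$ for the concatenated order. One must check that the sign convention of \eqref{eq:super-normal-ordering}, counting inversions $i<j$ with $\sigma^{-1}(i)>\sigma^{-1}(j)$, agrees with the standard braiding sign; I would verify this on adjacent transpositions and use multiplicativity. Applying $\bigotimes_B\lambda_B$ then evaluates $\int_X$ on singletons and the pairing on pairs in increasing order. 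A subtlety is that a functional such as $(\,,\,)$ passed over earlier factors could introduce extra signs. These signs vanish because $\lambda_B$ is even (both $\int_X$ and the pairing are nonzero only on total degree $4$), and each $\lambda_B$ is applied to its own already-grouped block. Summing over $\nu$ reassembles $\mathcal I_\Gamma(T_1,\ldots,T_N)$ by linearity. Independence of the decomposition follows, since both sides are multilinear in the $T_i$.

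I expect the main obstacle to be this sign bookkeeping: matching the Koszul sign of the normal-ordering convention with the categorical super-permutation $\Sigma_\Gamma$. Definition \ref{def:cohomological-contribution} asserts this match, so the proof can cite it. If needed, I would prove it by factoring $\sigma_\Pi$ into adjacent transpositions and comparing the two signs one transposition at a time.
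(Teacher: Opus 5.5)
Your proposal is correct and is essentially the paper's proof. Like the paper, you expand each $T_i$ in a homogeneous K\"unneth decomposition, apply \Cref{thm:wick-current} termwise, and split each summand into the scalar kernels, which multiply to $\mathscr F_\Gamma$, and the signed cohomological factors $\eps(\Pi)\prod\int_X\alpha_a\prod(\alpha_a,\alpha_b)$, which sum over the decomposition to $\mathcal I_\Gamma$ via $\Sigma_\Gamma$ and $\bigotimes_B\lambda_B$; your extra checks (that the $\lambda_B$ are even, and that the inversion-count sign agrees with the categorical braiding) simply spell out what the paper asserts in \Cref{def:cohomological-contribution}.
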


\begin{proof}
Apply \Cref{thm:wick-current} to the homogeneous tensor factors in
\eqref{eq:tensor-current}.  A singleton contributes the cohomological
factor $\int_X\alpha_a$, and either kind of pair contributes
$(\alpha_a,\alpha_b)$.  Their ordered product, with the sign
\eqref{eq:koszul-sign}, is the evaluation by $\Sigma_\Gamma$ and the
functionals $\lambda_B$ in \eqref{eq:cohomological-contribution}.
Summing over the tensor decompositions therefore gives
$\mathcal I_\Gamma(T_1,\ldots,T_N)$, while the scalar kernels give
$\mathscr F_\Gamma$.  The factors $1/r_i!$ come from
\eqref{eq:tensor-current}.  Since \eqref{eq:cohomological-contribution} is a
composition of canonical maps in the category of super-vector spaces, the
result is independent of the K\"unneth expansions.
\end{proof}

For small diagonals, pairing two tensor factors has a familiar geometric interpretation.  The remaining cohomological identities hold for every smooth projective surface, without an assumption on $K_X$.

\begin{lemma}\label[lemma]{lem:diagonal-self-intersection}
Contracting two tensor factors of $\tau_{r*}\alpha$ by the Poincar\'e pairing, after the Koszul permutation bringing those factors together, gives
\begin{equation}\label{eq:diagonal-contraction}
\operatorname{contr}_{a,b}(\tau_{r*}\alpha)
=\tau_{r-2,*}(\eX\alpha).
\end{equation}
For $r=2$, the right-hand side means $\tau_{0,*}(\eX\alpha):=\int_X\eX\alpha$.
\end{lemma}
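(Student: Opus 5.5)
We reduce \eqref{eq:diagonal-contraction} to the case \(r=2\), where the pairing meets the self-intersection of the diagonal. Both sides are linear in \(\alpha\) and canonical, so they can be compared after pairing with arbitrary test classes.

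First, we use the Koszul symmetry of \(\tau_{r*}\alpha\). The diagonal \(\tau_r(X)\subset X^r\) is invariant under permutations of the factors. Hence \(\tau_{r*}\alpha\) is supersymmetric: a permutation of tensor factors, taken with its Koszul sign, fixes it. We may therefore move the factors \(a,b\) to the first two slots. This is exactly the Koszul permutation in the statement, so it suffices to treat \(\{a,b\}=\{1,2\}\).

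Second, we factor the small diagonal. Write \(\tau_r=(\tau_2\times\id_{X^{r-2}})\circ\tau_{r-1}\), so that \(\tau_{r*}\alpha=(\tau_2\times\id)_*\tau_{r-1,*}\alpha\). Contracting the first two factors by the Poincar\'e pairing is the map
\[
H^{\otimes r}\to H^{\otimes(r-2)},\qquad \gamma\mapsto (\pi_{3\ldots r})_*\bigl(\gamma\cdot\pi_{12}^*[\Delta]\bigr),
\]
where \([\Delta]=\tau_{2*}1\). This holds because \(\int_X\beta_1\beta_2=\int_{X^2}(\beta_1\otimes\beta_2)\cdot[\Delta]\), and it holds with the same sign conventions since \([\Delta]\) is even.

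Third, we apply the projection formula:
\[
(\tau_2\times\id)_*\eta\cdot\pi_{12}^*[\Delta]=(\tau_2\times\id)_*\bigl(\eta\cdot(\tau_2\times\id)^*\pi_{12}^*[\Delta]\bigr),
\]
and \(\tau_2^*[\Delta]=c_2(N_{\Delta/X^2})=c_2(T_X)=\eX\). Pushing forward along \(\pi_{3\ldots r}\circ(\tau_2\times\id)\), which is the projection \(X\times X^{r-2}\to X^{r-2}\), and using \(\tau_{r-1,*}\alpha\cdot(\eX\otimes1)=\tau_{r-1,*}(\eX\alpha)\), we get \(\tau_{r-2,*}(\eX\alpha)\). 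For \(r=2\) the same chain gives \(\int_X\tau_2^*[\Delta]\,\alpha=\int_X\eX\alpha\).

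The main obstacle is sign bookkeeping. We must check that identifying the pairing with cup product against \([\Delta]\), and the Künneth identification \(H^*(X^r)\cong H^{\otimes r}\), match the Koszul conventions of \eqref{eq:diagonal-operator} and the graded-symmetric pairing. We handle this by checking on homogeneous decompositions: since \([\Delta]\) and \(\eX\) are even, no extra signs arise beyond the permutation already accounted for in the first step.
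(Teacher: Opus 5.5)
Your proof is correct and follows essentially the same route as the paper. The paper invokes the diagonal self-intersection formula $\Delta^*\Delta_*(1)=c_2(T_X)=\eX$ and applies the projection formula to the two contracted factors, leaving the other $r-2$ factors unchanged; you make the implicit steps explicit, namely the supersymmetry reduction to the first two slots, the factorization $\tau_r=(\tau_2\times\mathrm{id})\circ\tau_{r-1}$, and the parity argument that no extra Koszul signs arise.
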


\begin{proof}
This is the cohomological form of the diagonal self-intersection formula
\begin{equation}\label{eq:diagonal-self-intersection}
\Delta^*\Delta_*(1)=c_2(T_X)=\eX.
\end{equation}
Apply the projection formula to the two factors being contracted and leave the other $r-2$ factors unchanged.
\end{proof}

For even cohomology classes, the cohomological contribution has the
following explicit form.

\begin{corollary}\label[corollary]{cor:small-diagonal-cohomological-contribution}
Suppose that $\gamma_1,\ldots,\gamma_N$ are even and
$T_i=\tau_{r_i*}\gamma_i$.  Let $\overline\Gamma$ be the multigraph with
vertices $1,\ldots,N$ and one edge for each pair of $\Gamma$, including
loops.  For every connected component $C$ of $\overline\Gamma$, set
\[
g_C=|E(C)|-|V(C)|+1.
\]
Then
\begin{equation}\label{eq:small-diagonal-cohomological-contribution}
\mathcal I_\Gamma(\gamma_1,\ldots,\gamma_N)
=\prod_{C\in\pi_0(\overline\Gamma)}
\int_X\eX^{g_C}\prod_{i\in V(C)}\gamma_i.
\end{equation}
\end{corollary}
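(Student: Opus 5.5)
Since all $\gamma_i$ are even, we may choose Künneth decompositions of $\tau_{r_i*}\gamma_i$ in which only pairs of classes of equal parity give nonzero contractions. Any sign arising from the super-permutation $\Sigma_\Gamma$ is then absorbed into the canonical contractions. The cleanest route is therefore to compute $\mathcal I_\Gamma$ as an iterated contraction in the symmetric monoidal category, which is order-independent. The plan is to evaluate the contractions one edge at a time using \Cref{lem:diagonal-self-intersection} together with a gluing identity for edges joining distinct small diagonals. Since $\mathcal I_\Gamma$ factors over connected components (the functionals $\lambda_B$ act on disjoint tensor factors, and the classes $\tau_{r_i*}\gamma_i$ for vertices in different components never interact), it suffices to treat a connected $\overline\Gamma$.

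For a connected component $C$, I would first choose a spanning tree and contract its edges. The key identity for an intervertex edge joining factors of $\tau_{r*}\beta$ and $\tau_{s*}\beta'$ is
\[
\operatorname{contr}\bigl(\tau_{r*}\beta\otimes\tau_{s*}\beta'\bigr)=\tau_{r+s-2,*}(\beta\beta').
\]
This follows from the projection formula: pairing a factor of $\tau_{r*}\beta$ against a factor of $\tau_{s*}\beta'$ amounts to pushing forward along the diagonal $X\to X\times X$ and integrating over $X$. Equivalently, the Poincaré pairing is the integral of the class of the diagonal, so $\sum_k(e_k,\cdot)\,e_k^\vee$ reproduces the diagonal class. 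Contracting the tree edges one at a time merges all vertices of $C$ into a single small-diagonal class $\tau_{R*}\bigl(\prod_{i\in V(C)}\gamma_i\bigr)$, where $R$ counts the remaining half-edges.

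The remaining edges of $C$ are $|E(C)|-|V(C)|+1=g_C$ in number: the non-tree edges and the loops. Each of them is now a self-contraction of two factors of one small diagonal. By \Cref{lem:diagonal-self-intersection}, each such contraction multiplies the class by $\eX$ and lowers $R$ by two. After $g_C$ steps we reach $\tau_{R'*}\bigl(\eX^{g_C}\prod_{i\in V(C)}\gamma_i\bigr)$.

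The singletons of $\Gamma$ attached to $C$ are then evaluated by $\int_X$ applied to each remaining tensor factor. Since integrating all factors of $\tau_{k*}\beta$ gives $\int_X\beta$, this yields $\int_X\eX^{g_C}\prod_{i\in V(C)}\gamma_i$. When no factors remain, we use the convention $\tau_{0,*}$ of \Cref{lem:diagonal-self-intersection}; a component that is a single vertex with only singletons works in the same way. Multiplying over the components gives \eqref{eq:small-diagonal-cohomological-contribution}.

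The main obstacle is the sign bookkeeping. The Koszul sign in $\Sigma_\Gamma$ must agree with the sign in the sequential contraction procedure, and this requires odd Künneth factors to be handled carefully. I would argue abstractly: $\mathcal I_\Gamma$ is a composite of canonical morphisms in super-vector spaces, and $\tau_{r*}\gamma$ with $\gamma$ even is an $S_r$-invariant even tensor, invariant under the super-permutation action. Coherence then implies that any order of performing the contractions yields the same scalar, which justifies the tree-first procedure. The point to check is that contracting with the graded-symmetric pairing in either orientation gives the same answer on these invariant tensors.
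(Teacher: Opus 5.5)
Your proposal is correct and follows the paper's proof. Both arguments factor over connected components, contract along a spanning tree to merge the small diagonals into one carrying $\prod_{i\in V(C)}\gamma_i$, and use \Cref{lem:diagonal-self-intersection} so that each of the remaining $g_C$ edges inserts a factor $e_X$; the singletons then integrate. Your explicit gluing identity $\tau_{r*}\beta\otimes\tau_{s*}\beta'\mapsto\tau_{r+s-2,*}(\beta\beta')$ and your coherence argument for the Koszul signs spell out what the paper states in one sentence.
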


\begin{proof}
The classes $\gamma_i$ have even degree and therefore commute
without additional signs.  The Koszul
signs of their K\"unneth factors are already included in the graded pairings
and the diagonal-contraction identity.  In each connected component, use
the Poincar\'e pairings along a spanning tree to
replace their labels by the product $\prod_{i\in V(C)}\gamma_i$
on a single small diagonal.  Every one of
the remaining $g_C$ pairings is a self-intersection of that diagonal and
therefore inserts one factor of $\eX$ by
\Cref{lem:diagonal-self-intersection}.  After all pairings and singleton
integrations have been applied, the integral over $X$ of the resulting class
is the expression in
\eqref{eq:small-diagonal-cohomological-contribution}.
\end{proof}

\section{Quasi-elliptic constant terms and the weight bound}\label{sec:quasielliptic}

In \Cref{sec:wick}, we proved that \(\widehat F^{\boldsymbol\alpha}_{\mathbf j}(q)\) is the constant term of certain combinations of the quasi-elliptic functions \(\widehat{Z}\), \(P\), and \(T\). Here, by applying the theorem of Goujard and M\"oller \cite{GoujardMoller2020} on the quasimodularity of such constant terms, we complete the proof of \Cref{thm:G-insertions}.

\subsection{The Goujard--M\"oller constant-term theorem}

We recall only the part of the quasi-elliptic formalism needed here.  Let
$z=(z_0,\ldots,z_N)$ and $q=e^{2\pi i\tau}$.  Goujard and M\"oller define a graded ring
\[
\QEll_{N+1}=\bigoplus_{w\geq0}\QEll_{N+1}^{(w)}
\]
of quasi-elliptic quasimodular functions in the variables $z_i$~\cite[Section~5.3]{GoujardMoller2020}.  Its elements are meromorphic in the elliptic variables, have poles only where $z_i-z_j\in\Z+\tau\Z$, are periodic under $z_i\mapsto z_i+1$, and transform polynomially under $z_i\mapsto z_i+\tau$.  The coefficients of the resulting polynomials are quasimodular in $\tau$.  The ring contains
\[
Z(z_i-z_j),\qquad D_z^mP(z_i-z_j),
\]
with weights $1$ and $m+2$, respectively, and differentiation with respect to an elliptic variable raises weight by one; these are the grading conventions of ~\cite[Section~5.3]{GoujardMoller2020}.  Products are graded by the sum of weights.

For a permutation $\pi$ of $\{0,\ldots,N\}$, choose real numbers
\[
0\leq\eps_{\pi(0)}<\eps_{\pi(1)}<\cdots<\eps_{\pi(N)}<\Im\tau
\]
and define the \emph{chamber constant term} \(\text{CT}_\pi\) by integrating over the horizontal contours
$z_i=t_i+i\eps_i$, $0\leq t_i\leq1$.  Equivalently, this is the simultaneous Fourier coefficient of $x_0^0\cdots x_N^0$ in the corresponding nested annulus.

\begin{theorem}[{\cite[Theorem~5.8]{GoujardMoller2020}}]\label{thm:GM}
If $f\in\QEll_{N+1}^{(w)}$, then its constant term in every chamber is a quasimodular form of mixed weight at most $w$:
\begin{equation}\label{eq:GM-theorem}
\CT_\pi(f)\in\Fil_{\leq w}\QM.
\end{equation}
The same conclusion holds for a function of mixed quasi-elliptic weight at most $w$.
\end{theorem}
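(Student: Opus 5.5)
Since \Cref{thm:GM} is quoted from \cite{GoujardMoller2020}, we only outline how we would reprove it. The plan is an induction on the number of elliptic variables. Each step takes the constant term in one variable, say $z_k$, over its horizontal contour at height $\eps_k$. We then show that the result is a function of mixed quasi-elliptic weight at most $w$ in the remaining variables. The precise claim is that $\CT_{z_k}$ maps $\QEll_{N+1}^{(w)}$ into $\bigoplus_{w'\le w}\QEll_N^{(w')}$, where the chamber of the remaining variables is induced by $\pi$. Iterating $N+1$ times leaves a function of $\tau$ alone that lies in $\Fil_{\le w}\QM$. The mixed-weight version of the theorem follows by linearity.

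For the one-variable step, we would regard $f$ as a function of $z=z_k$ with coefficients in the ring generated by the other variables. Its poles lie only at $z\equiv z_j$, and it transforms polynomially under $z\mapsto z+\tau$. First we would prove a partial-fraction decomposition: $f$ can be written, with coefficients in $\QEll_N$ that carry complementary weights, as a combination of three kinds of terms. These are the functions $D_z^mP(z-z_j)$ for $m\ge0$, the polynomials in the $Z(z-z_j)$ that are linear in each $j$, and a term independent of $z$. To get there we would subtract principal parts at each pole using $D^mP(z-z_j)$, which have Laurent tails $c/(z-z_j)^{m+2}$ and weight $m+2$. Simple poles are removed with $Z(z-z_j)$. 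The remainder is then holomorphic in $z$ and transforms polynomially, so its $\tau$-monodromy forces it to be a polynomial in $Z(z-z_0)$, say, with no poles. That in turn forces it to be constant in $z$.

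Products $Z(z-a)Z(z-b)$ would be linearized by the Fay-type identity
\[
Z(z-a)Z(z-b)=Z(a-b)\bigl(Z(z-a)-Z(z-b)\bigr)+\tfrac12\bigl(P(z-a)+P(z-b)\bigr)+\dots,
\]
where the omitted terms are quasimodular constants such as multiples of $E_2$. Both sides have weight $2$, so the weight is preserved. Powers $Z(z-a)^k$ would be reduced in the same way using $D^mP$ together with quasimodular constants.

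Once $f$ is in this normal form, we would take the constant term term by term. For $D^mP(z-z_j)$ the constant term is zero, as the Fourier expansion \eqref{eq:P-Fourier} shows. For $Z(z-z_j)$ it is $\pm\tfrac12$ or $\tfrac12$, depending on whether $\eps_k$ lies above or below $\eps_j$ in the chamber. This is a weight-$0$ constant replacing a weight-$1$ factor, so the weight can only drop. The coefficient functions pass through unchanged. Hence $\CT_{z_k}f$ lies in $\bigoplus_{w'\le w}\QEll_N^{(w')}$.

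We expect the main obstacle to be the partial-fraction step. Two things need care there. One must check that the decomposition stays inside the graded ring $\QEll$ with the correct weights, in particular that the Fay reduction of products of $Z$'s produces only quasimodular constants of matching weight. One must also control the quasi-periodicity, since the transformation of $Z$ under $z\mapsto z+\tau$ is what allows a holomorphic remainder to be identified as constant. We would handle both by first treating the subring generated by $Z$ and the $D^mP$, which is all that \Cref{sec:wick} requires. There the identities can be checked directly from the Fourier expansions \eqref{eq:Zhat-Fourier}--\eqref{eq:P-Fourier}.
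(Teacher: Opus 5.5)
The paper does not prove this statement; it imports it from Goujard--M\"oller. Your sketch therefore has to stand on its own, and its central step fails.

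\textbf{The normal form cannot exist.} Any combination of $Z(z-z_j)$, $D^mP(z-z_j)$ and $z$-independent terms changes under $z\mapsto z+\tau$ by a $z$-independent amount. By contrast, $Z(z-a)^2$ changes by $2Z(z-a)+1$, and $Z(z-a)P(z-b)$ changes by $P(z-b)$. So your partial-fraction normal form only covers functions that are elliptic in $z$ up to a multiple of a single $Z(z-a)$.

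The underlying reason is that the principal parts of a non-elliptic function vary along the lattice. After you remove the double pole of $Z(z-a)^2$ with $P(z-a)$, simple poles remain at $a+n\tau$ with residues proportional to $n$. The remainder is therefore never holomorphic, and the step ``holomorphic with polynomial monodromy, hence constant'' cannot be applied.

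The same monodromy count refutes your Fay-type identity: its left side changes by $Z(z-a)+Z(z-b)+1$, while its right side is invariant. The correct relation comes from $(\zeta(u)+\zeta(v)+\zeta(w))^2=\wp(u)+\wp(v)+\wp(w)$ for $u+v+w=0$, and reads
\[
\bigl(Z(z-a)-Z(z-b)+Z(a-b)\bigr)^2=P(z-a)+P(z-b)+P(a-b)+\tfrac14E_2 .
\]
It only trades $Z(z-a)Z(z-b)$ for $\tfrac12\bigl(Z(z-a)^2+Z(z-b)^2\bigr)$ plus terms at most linear in the $Z(z-\cdot)$. Powers $Z(z-a)^k$ with $k\geq2$ cannot be reduced to $Z$, $D^mP$ and constants at all.

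\textbf{These terms cannot be avoided.} Two singletons at one vertex give $\zhat(z_i-z_0)^2$, and a singleton together with an edge gives $\zhat(z_i-z_0)P(z_j-z_i)$; compare \Cref{lem:elementary-CTs}. Even elliptic input leaves the elliptic class after one step: $\CT_z$ of $P(z-a)P(z-b)$ already contains $Z(a-b)\,DP(a-b)$. These terms are also exactly where the weight-two contributions come from. For instance, for $0<\Im(z-a)<\Im\tau$,
\[
\CT_zZ(z-a)^2=\tfrac14-2T_2=\tfrac{2+E_2}{12}.
\]

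\textbf{What is missing} is a way to take $\CT_z$ of $g\,Z(z-a)^k$ with $g$ elliptic in $z$. One option:
\begin{enumerate}
\item Write $f=\sum_kg_kZ(z-a)^k$ with each $g_k$ elliptic in $z$.
\item Put $F=\sum_kg_k\,B_{k+1}\bigl(Z(z-a)\bigr)/(k+1)$, where $B_{k+1}$ is the Bernoulli polynomial. Then $F(z+\tau)-F(z)=f(z)$.
\item Apply the residue theorem to the parallelogram spanned by $1$ and $\tau$ on the contour at height $h$. This gives $\CT_zf=-2\pi i\sum_p\operatorname{Res}_{z=p}F$, where $p$ runs over the poles with $h<\Im p<h+\Im\tau$.
\item In the variable $U=2\pi i(z-p)$, the Laurent coefficients of $Z$ and $D^mP$ at $0$ are quasimodular of matching weight; for example $Z=-U^{-1}-\frac{E_2}{12}U+\cdots$.
\item Hence these residues are quasi-elliptic in the remaining variables, of mixed weight at most $w$.
\end{enumerate}
This is the inductive step you need. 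For $f=Z(z-a)^2$ it reproduces $\frac{2+E_2}{12}$.
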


\begin{remark}[Scope of the constant-term theorem]\label[remark]{rem:GM-full-generality}
The point needed in the proof of \Cref{thm:G-insertions} is that \Cref{thm:GM} applies to the full ring $\QEll_{N+1}$ and to every chamber.  By \eqref{eq:ZP-identification} and \Cref{def:wick-graph}, every factor of $\mathscr F_\Gamma$ is a derivative of $Z$ or $P$, or a quasimodular coefficient; hence each associated Wick-graph function lies in this ring.  Therefore \Cref{thm:GM} applies to the chamber \eqref{eq:nested-chamber} used by the operator trace.  This remark records precisely the scope needed to pass from the Wick formula \eqref{eq:wick-current-formula} to quasimodularity.
\end{remark}

\subsection{Weights of Wick-graph functions}

Assign to a half-edge carrying derivative order $d$ the weight
\begin{equation}\label{eq:half-edge-weight}
\omega(d)=1+d.
\end{equation}

\begin{proposition}[Weight bound for associated functions]\label[proposition]{prop:graph-weight}
Let $\Gamma$ be a Wick graph and let $\mathscr F_\Gamma$ be its associated function in the sense of \Cref{def:wick-graph}.  Then
\begin{equation}\label{eq:graph-function-weight}
\mathscr F_\Gamma
\in
\bigoplus_{0\leq v\leq W_\Gamma}\QEll_{N+1}^{(v)},
\qquad
W_\Gamma=\sum_{a\in\bigsqcup_i\mathsf H_i}(1+d_a).
\end{equation}
More precisely, the factors defining $\mathscr F_\Gamma$ satisfy the following bounds:
\begin{enumerate}[label=\textup{(\roman*)},leftmargin=28pt]
\item the singleton factor $D^d\zhat$ has mixed weight at most $1+d$;
\item the intervertex factor $D^{d+e}P$ is homogeneous of weight $2+d+e$;
\item the loop factor $T_{d+e+2}$ has mixed weight at most $2+d+e$.
\end{enumerate}
\end{proposition}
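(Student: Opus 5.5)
The plan is to prove the three factor bounds (i)--(iii) separately and then deduce \eqref{eq:graph-function-weight} from the multiplicativity of the grading on $\QEll_{N+1}$. Since $\mathscr F_\Gamma$ is a product of factors indexed by the parts of $\Pi_\Gamma$, and every half-edge $a$ lies in exactly one part, it suffices to show that the factor attached to a part $B$ has mixed weight at most $\sum_{a\in B}(1+d_a)$. Summing over parts then gives at most $W_\Gamma$. The product of elements of mixed weights at most $w_1,w_2$ has mixed weight at most $w_1+w_2$, because the ring is graded and its components are indexed by nonnegative weights.

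For (i), write $D^d\zhat = D^dZ - \tfrac12\delta_{d,0}$. By the grading conventions recalled before \Cref{thm:GM}, $Z(z_i-z_0)$ has weight $1$, and each $D_z$ raises weight by one, so $D^dZ$ is homogeneous of weight $1+d$. The constant $-\tfrac12$, which occurs only when $d=0$, has weight $0$. Hence the singleton factor lies in $\QEll^{(0)}\oplus\QEll^{(1+d)}$, which has mixed weight at most $1+d$. For $d\ge 2$ even, $D^d\zhat=D^dZ$ is homogeneous.

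For (ii), $D^{m}P(z_j-z_i)$ has weight $m+2$ by the same conventions. Taking $m=d+e$ gives weight $2+d+e=(1+d)+(1+e)$; the sign $-1$ does not affect the weight.

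For (iii), set $s=d+e+2$, which is even and at least $2$ because $d,e$ are even. Then \eqref{eq:loop-series-Eisenstein} gives $T_s=\tfrac{B_s}{2s}(1-E_s)$. This is a constant of weight $0$ plus a multiple of the Eisenstein series $E_s$, which has weight $s$ (for $s=2$, $E_2$ is quasimodular of weight $2$). Such an element of $\QM$, independent of the elliptic variables, lies in $\QEll_{N+1}^{(0)}\oplus\QEll_{N+1}^{(s)}$. Its mixed weight is therefore at most $s=(1+d)+(1+e)$.

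The main point needing care is the justification that quasimodular forms in $\tau$ alone, and the constant $-\tfrac12$, sit in $\QEll_{N+1}$ with their modular weight. I will take this from the construction in \cite[Section~5.3]{GoujardMoller2020}, where $\QM$ is the coefficient ring and weights are compatible. I will also check that the normalization \eqref{eq:ZP-identification} matches theirs, so that $Z$ rather than $\zhat$ is the weight-one generator. This is why (i) and (iii) are stated only as mixed-weight bounds, while (ii) is homogeneous.
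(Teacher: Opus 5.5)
Your proposal is correct and follows essentially the same route as the paper. You bound each factor separately, using $\zhat=Z-\tfrac12$ with $Z$ of weight $1$, $D^mP$ of weight $m+2$, and $T_s=\tfrac{B_s}{2s}(1-E_s)$ for even $s$, and then add these bounds over the parts of $\Pi_\Gamma$, since every half-edge lies in exactly one part and multiplication adds quasi-elliptic weights. Your extra care about the constant $-\tfrac12$ and the $\tau$-only quasimodular factors lying in $\QEll_{N+1}$ with their modular weights addresses a point the paper takes directly from the Goujard--M\"oller grading conventions.
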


\begin{proof}
The function $Z$ is homogeneous of weight $1$ in $\QEll$, so $\zhat=Z-\frac12$ has components of weights $1$ and $0$; because elliptic differentiation raises weight by one, $D^d\zhat$ has mixed weight at most $1+d$.  Since $P=DZ$ has weight $2$, the factor $D^{d+e}P$ has weight $2+d+e$.  Finally, all derivative orders are even, and hence \eqref{eq:loop-series-Eisenstein} writes $T_{d+e+2}$ as a sum of weights $0$ and $d+e+2$.  Multiplication adds quasi-elliptic weights, while every half-edge belongs to exactly one singleton or pair of $\Pi_\Gamma$; adding these factor bounds gives \eqref{eq:graph-function-weight}.
\end{proof}

\begin{corollary}\label[corollary]{cor:current-weight}
In a Wick-graph function obtained from a product of the currents \eqref{eq:V-current} and \eqref{eq:S-current}, the half-edges at a vertex representing $\Vcur_r$ contribute $r$ to the bound $W_\Gamma$, whereas those at a vertex representing one summand of $\Scur_r$ contribute $r+2$.  Consequently, if the vertex $i$ is chosen from one of the three current terms representing $\GG_{k_i}(\alpha_i)$ in \eqref{eq:current-reduction}, then every associated function satisfies
\begin{equation}\label{eq:current-graph-function-weight}
\mathscr F_\Gamma
\in
\bigoplus_{0\leq v\leq\sum_i(k_i+2)}
\QEll_{N+1}^{(v)}.
\end{equation}
\end{corollary}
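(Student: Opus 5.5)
The plan is to apply \Cref{prop:graph-weight} vertex by vertex. We sum the half-edge weights $\omega(d)=1+d$ over the half-edges attached to a single vertex. For a vertex representing $\Vcur_r(\alpha;x)$, all $r$ fields are undifferentiated. This is visible from the field expression $\Vcur_r=\frac1{r!}:\Afield(x)\cdots\Afield(x):(\tau_{r*}\alpha)$, which is of the form $\mathcal J(\tau_{r*}\alpha,\mathbf 0;x)$ in \eqref{eq:tensor-current}. Hence every $d_a=0$, and these half-edges contribute exactly $r$ to $W_\Gamma$.

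For $\Scur_r$, we use the marked-half-edge form \eqref{eq:S-marked-half-edge}. This writes $\Scur_r$ as a sum over $h=1,\ldots,r$ of currents $\mathcal J(\tau_{r*}\alpha,\mathbf d^{(h)};x)$, where $\mathbf d^{(h)}$ has a single entry $2$ in position $h$ and zeros elsewhere. Each such summand is a normally ordered block of the type in \Cref{thm:wick-current}, with even derivative orders. Its half-edges contribute $(r-1)\cdot 1+(1+2)=r+2$. By multilinearity, the trace of a product of currents expands, through \Cref{cor:basis-free-graphs}, into a finite sum over choices of summand at each vertex and over Wick graphs. Each resulting $\mathscr F_\Gamma$ therefore satisfies \eqref{eq:graph-function-weight} with $W_\Gamma$ equal to the sum of these per-vertex contributions.

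It remains to bound the per-vertex contributions for the three current terms of \eqref{eq:current-reduction} representing $\GG_{k_i}(\alpha_i)$:
\begin{itemize}
\item the term $-[x^0]\Vcur_{k_i+2}(\alpha_i;x)$ gives $k_i+2$;
\item the term $\frac1{24}[x^0]\Scur_{k_i}(\eX\alpha_i;x)$ gives $k_i+2$;
\item the term $-\frac1{12}[x^0]\Vcur_{k_i}(\eX\alpha_i;x)$ gives $k_i\le k_i+2$.
\end{itemize}
When $k_i=0$, the convention \eqref{eq:zero-current-convention} makes the last two terms vanish, so they contribute no graphs. Summing over $i$ gives $W_\Gamma\le\sum_i(k_i+2)$. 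Since the filtration $\bigoplus_{v\le W}$ is increasing in $W$, this yields \eqref{eq:current-graph-function-weight}.

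The only point requiring care is that the scalar coefficients $-1$, $\frac1{24}$, $-\frac1{12}$ and the cohomological factors $\mathcal I_\Gamma$ are $q$-independent constants. They therefore do not affect weights, and the bound is a statement about each $\mathscr F_\Gamma$ separately. I expect no genuine obstacle; the main check is that the decomposition of $\Scur_r$ into marked summands really produces normally ordered blocks with even $d_a$, so that \Cref{thm:wick-current} and \Cref{prop:graph-weight} apply verbatim.
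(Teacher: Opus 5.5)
Your proposal is correct and follows the paper's proof: you compute the per-vertex weights $r$ for $\Vcur_r$ and $(1+2)+(r-1)=r+2$ for each marked summand of $\Scur_r$, obtain $k_i+2$, $k_i+2$, $k_i$ for the three terms of \eqref{eq:current-reduction}, and then sum over vertices and apply \Cref{prop:graph-weight}. Your check that every derivative order is even corresponds to the paper's closing remark that only $T_2$ and $T_4$ occur as loop factors.
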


\begin{proof}
Every half-edge of $\Vcur_r$ has derivative order $0$, so its contribution to $W_\Gamma$ is $r$.  Each summand of $\Scur_r$ has one half-edge of derivative order $2$ and $r-1$ of order $0$, giving
\[
(1+2)+(r-1)(1+0)=r+2.
\]
At a vertex representing $\GG_k(\alpha)$, the three possibilities in \eqref{eq:current-reduction} therefore contribute, respectively,
\[
k+2,
\qquad
k+2,
\qquad
k.
\]
Summing over vertices and applying \Cref{prop:graph-weight} proves \eqref{eq:current-graph-function-weight}.  All loop indices are even; indeed, a loop in one summand of $\Scur$ has at most one endpoint with derivative order $2$ by \eqref{eq:S-marked-half-edge}, so only $T_2$ and $T_4$ occur.
\end{proof}

\subsection{From operator constant terms to chamber constant terms}

We use the notation $x_0=y$ introduced with the scalar kernels and write $\Omega_{q,x_0}$ for the normalized trace.

Define the operator-valued series
\begin{equation}\label{eq:C-current}
\Ccur_k(\alpha;x)
=-\Vcur_{k+2}(\alpha;x)
+\frac1{24}
\left(
\Scur_k(\eX\alpha;x)-2\Vcur_k(\eX\alpha;x)
\right).
\end{equation}
By \Cref{prop:current-reduction},
\begin{equation}\label{eq:G-is-CT-C}
\GG_k(\alpha)=[x^0]\Ccur_k(\alpha;x).
\end{equation}
For fixed pairs $(j_i,\alpha_i)$, put
\begin{equation}\label{eq:Phi-def}
\Phi(x_0,\ldots,x_N;q)
=
\Omega_{q,x_0}\!\left(
\prod_{i=1}^N\Ccur_{j_i}(\alpha_i;x_i)
\right).
\end{equation}

\begin{lemma}\label[lemma]{lem:trace-CT-interchange}
In the chamber \eqref{eq:nested-chamber},
\begin{equation}\label{eq:partial-CT-Phi}
\Omega_{q,x_0}\!\left(
\prod_{i=1}^N\GG_{j_i}(\alpha_i)
\right)
=
[x_1^0\cdots x_N^0]\Phi(x_0,\ldots,x_N;q).
\end{equation}
Moreover, every Laurent monomial in $\Phi$ has total exponent zero, and hence
\begin{equation}\label{eq:full-CT-Phi}
[x_1^0\cdots x_N^0]\Phi(1,x_1,\ldots,x_N;q)
=
[x_0^0x_1^0\cdots x_N^0]\Phi(x_0,\ldots,x_N;q).
\end{equation}
The right-hand side is the chamber constant term to which \Cref{thm:GM} applies.
\end{lemma}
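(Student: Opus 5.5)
The plan is to prove \eqref{eq:partial-CT-Phi} by linearity and a coefficient-extraction argument, and then \eqref{eq:full-CT-Phi} from a degree count for the number operator.

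\textbf{Step 1: expand into normally ordered blocks.} By \eqref{eq:G-is-CT-C}, each $\GG_{j_i}(\alpha_i)$ is the coefficient of $x_i^0$ in $\Ccur_{j_i}(\alpha_i;x_i)$. That coefficient is a finite combination of normally ordered mode monomials, namely those whose modes sum to zero. Expanding the product, $\prod_i\Ccur_{j_i}(\alpha_i;x_i)$ is a finite K\"unneth linear combination of products of separately normal-ordered blocks of the fields \eqref{eq:derived-field}, with $d\in\{0,2\}$ as in \eqref{eq:S-marked-half-edge}. The coefficient of $x_1^{n_1}\cdots x_N^{n_N}$ in this product is the product of the $x_i^{n_i}$-coefficients of the individual currents.

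\textbf{Step 2: interchange trace and coefficient extraction.} By \Cref{lem:mode-trace-limit}, the mode sum defining $\Omega_{q,x_0}$ of such a product converges absolutely and locally uniformly in the chamber \eqref{eq:nested-annuli}. Hence extracting the Laurent coefficient by integrating over the horizontal contours of the chamber commutes with $\Omega_{q,x_0}$. Taking the $x_1^0\cdots x_N^0$ coefficient therefore gives $\Omega_{q,x_0}$ of the product of the $x_i^0$-coefficients, which is the left side of \eqref{eq:partial-CT-Phi}.

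\textbf{Step 3: total exponent zero.} The term of $\Phi$ indexed by a mode assignment $(m_a)$ carries the monomial $\prod_a x_{i(a)}^{-m_a}$ times $\Omega_{q,x_0}$ of the corresponding mode monomial. The operator $q^{\nn}$ preserves $n$, and by \eqref{eq:number-commutator} a monomial shifts $n$ by $-\sum_a m_a$. Therefore $\Str(q^{\nn}\WW_yA)$ can be nonzero only if the operator $\WW_yA$ has a component preserving $n$. Expanding $\WW_y=e^{A_-}e^{A_+}$, a term of $e^{A_\pm}$ of mode $\pm k$ carries $y^{\pm k}$, so the power of $y=x_0$ equals the total mode of the $\WW_y$ factor. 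The $n$-preservation condition makes this equal to $\sum_a m_a$.

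Consequently every Laurent monomial in $x_0,\ldots,x_N$ in the expansion of $\Phi$ has total exponent $-\sum m_a+\sum m_a=0$. One can also see this directly from \Cref{thm:wick-current}: each kernel depends only on the ratios $x_i/y$ and $x_j/x_i$, apart from the pure $q$-series $T_s$.

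\textbf{Step 4: conclude \eqref{eq:full-CT-Phi}.} Since every monomial has total exponent zero, the coefficient of $x_1^0\cdots x_N^0$ forces the $x_0$-exponent to be zero. Thus $[x_1^0\cdots x_N^0]\Phi$ is independent of $x_0$, and it equals both its value at $x_0=1$ and the full constant term $[x_0^0x_1^0\cdots x_N^0]\Phi$. This full constant term is the chamber constant term $\CT_\pi$ for the identity ordering $0<1<\cdots<N$, by the definition preceding \Cref{thm:GM}.

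\textbf{Main obstacle.} The delicate point is the interchange in Step 2, specifically making sure the coefficient extraction is legitimate on contours lying inside the annuli \eqref{eq:nested-annuli}. This needs $\Im z_0\geq0$, and I would allow $|y|=1$ only as a boundary case of the uniform bounds; otherwise one takes $\Im z_0>0$ small and uses the $y$-independence from \Cref{prop:reduced-trace}. I would handle this by citing the uniform majorants of \Cref{lem:normal-convergence}.
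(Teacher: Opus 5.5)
\textbf{The gap is in Steps 1--2, which prove \eqref{eq:partial-CT-Phi}, the main content of the lemma.}

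Your claim that $[x_i^0]\Ccur_{j_i}(\alpha_i;x_i)$ is a \emph{finite} combination of normally ordered mode monomials is false. Already $\GG_0(\alpha)=-\sum_{m>0}(\Nak_{-m}\Nak_m)(\alpha)$ is an infinite sum, and in general every zero-sum tuple $(m_a)$ contributes. So the last sentence of Step~2 is not an application of linearity.

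What \Cref{lem:mode-trace-limit} gives you is this: the contour constant term of $\Phi$ equals an absolutely convergent scalar series $\sum_{\mathbf m}c_{\mathbf m}\,\Omega_{q,x_0}(A_{\mathbf m})$ over zero-sum mode assignments, each term analytic in $q$. It does not say that this series equals $\Omega_{q,x_0}$ of the operator $\prod_i\GG_{j_i}(\alpha_i)$. That trace is defined coefficientwise in $q$, via supertraces on the finite-dimensional pieces $H^*(X^{[n]})$. Interchanging $\Omega_{q,x_0}$ with an infinite operator sum is exactly what has to be justified, and your argument does not address it.

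\textbf{The missing idea is the local finiteness used in the paper's truncation argument.} In a normally ordered zero-mode monomial, the annihilation part acts first and has total mode $p=\sum_{m_a>0}m_a\geq|m_a|$ for every $a$. Hence the monomial kills $H^*(X^{[n]})$ as soon as some $|m_a|>n$. It follows that:
\begin{itemize}
\item $\GG_{j_i}(\alpha_i)$ agrees on $H^*(X^{[n]})$ with its truncation to $0<|m|\leq n$;
\item all these operators preserve the number grading, so the normalized traces of the truncated products stabilize through $q^{N_0}$. Equivalently, only finitely many $\mathbf m$ contribute to each $q$-coefficient, also after multiplying by $\qpoch^{\chi(X)}$;
\item at each finite cutoff the identity holds by genuine linearity.
\end{itemize}
To pass to the limit, use uniform convergence on fixed chamber contours (this is where \Cref{lem:mode-trace-limit} enters) together with a Cauchy integral over a small circle in the $q$-plane. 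This identifies the Taylor coefficients of the analytic constant term with the stabilized formal coefficients.

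The point you flag as the main obstacle, the placement of the $y$-contour, is harmless: the chamber \eqref{eq:nested-chamber} already allows $\Im z_0=0$.

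Your Steps 3--4 are fine. Step~3 is a valid alternative to the paper's argument, which reads total exponent zero off the ratio-dependence of the Wick kernels; yours uses the number grading directly. One correction in the bookkeeping: a term of $e^{A_+}$ of mode $+k$ carries $y^{-k}$, not $y^{k}$. So the power of $y$ is \emph{minus} the total mode of the $\WW_y$ factor. This slip and the one in your next sentence cancel, so you still reach the correct factor $y^{\sum_a m_a}$.
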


\begin{proof}
For $N_0\in\mathbb N$, truncate every field in $\Ccur_{j_i}$ to
$0<|m|\leq N_0$, and denote the resulting current by
$\Ccur_{j_i,N_0}$, with constant term
$\GG_{j_i,N_0}:=[x_i^0]\Ccur_{j_i,N_0}(\alpha_i;x_i)$.
A normally ordered zero-mode monomial has
\[
p=\sum_{m_a>0}m_a=\sum_{m_a<0}(-m_a);
\]
its annihilation operators act first and kill $H^*(X^{[n]})$ if $p>n$,
and otherwise every $|m_a|\leq n$.  Hence $\GG_{j_i,N_0}$ agrees with
$\GG_{j_i}(\alpha_i)$ on $H^*(X^{[n]})$ once $N_0\geq n$.  Both preserve
the number grading, so their products do as well, and the normalized
traces of $\prod_i\GG_{j_i,N_0}$ stabilize through $q^{N_0}$.
At finite cutoff, linearity gives
\[
\Omega_{q,x_0}\!\left(\prod_i\GG_{j_i,N_0}\right)
=[x_1^0\cdots x_N^0]\,
\Omega_{q,x_0}\!\left(\prod_i\Ccur_{j_i,N_0}(\alpha_i;x_i)\right).
\]
By \Cref{lem:mode-trace-limit} the limit on the right passes through the
coefficient contours, while the left side stabilizes coefficientwise to
the trace of $\prod_i\GG_{j_i}(\alpha_i)$; uniform convergence on fixed
chamber contours and a compatible circle in the $q$-plane identifies
these stabilized coefficients with those of the analytic constant term.
This proves \eqref{eq:partial-CT-Phi}, with all products kept in the
order $1,\ldots,N$.

Every scalar graph factor in \Cref{def:wick-graph} depends only on a ratio $x_i/x_0$ or $x_j/x_i$, while a loop factor is independent of all $x$-variables.  Therefore each Laurent monomial has total exponent zero.  If the exponents of $x_1,\ldots,x_N$ are all zero, the exponent of $x_0$ is automatically zero, proving \eqref{eq:full-CT-Phi}.  The contours specified by \eqref{eq:nested-chamber} are exactly the chamber contours in \Cref{thm:GM}.
\end{proof}

We can now prove \Cref{thm:G-insertions}, stated in \Cref{sec:introduction}.

\begin{proof}[Proof of \Cref{thm:G-insertions}]
By \Cref{prop:reduced-trace}, the left-hand side of \eqref{eq:G-insertion-result} is
\[
\Omega_{q,y}\!\left(
\prod_{i=1}^N\GG_{j_i}(\alpha_i)
\right).
\]
By \Cref{lem:trace-CT-interchange}, this is the full chamber constant term in \eqref{eq:full-CT-Phi}.  \Cref{cor:basis-free-graphs} expresses $\Phi$ as a finite sum of the associated functions $\mathscr F_\Gamma$ from \Cref{def:wick-graph}, multiplied by the $q$-independent cohomological contributions $\mathcal I_\Gamma$ of \Cref{def:cohomological-contribution}.  By \Cref{cor:current-weight}, each $\mathscr F_\Gamma$ lies in the mixed quasi-elliptic subspace of weight at most $\sum_i(j_i+2)$.  Apply \Cref{thm:GM} to each associated function and sum.  The resulting Fourier coefficients are rational: the scalar factors in \eqref{eq:singleton-kernel}--\eqref{eq:loop-kernel} have rational Fourier coefficients, and every $\mathcal I_\Gamma$ is a rational number by \eqref{eq:cohomological-contribution}.  As the filtered space of level-one quasimodular forms has a basis with rational Fourier coefficients, the expression belongs to $\Q[E_2,E_4,E_6]$.
\end{proof}

\section{Proof of Qin's conjecture}\label{sec:main-proof}

We return from the universal classes $G_k(\alpha,n)$ to tautological Chern characters.  The required identity is the GRR decomposition in \Cref{prop:GRR-decomposition}.

\begin{proposition}[GRR decomposition]\label[proposition]{prop:GRR-decomposition}
Let $L$ be a line bundle on $X$.  For every $k\geq0$,
\begin{equation}\label{eq:GRR-decomposition}
\ch_k(L^{[n]})
=
G_k(\1,n)
+G_{k-1}(c_1(L),n)
+G_{k-2}\!\left(\frac{c_1(L)^2}{2},n\right),
\end{equation}
where $G_j=0$ for $j<0$.
\end{proposition}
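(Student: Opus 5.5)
The plan is to apply Grothendieck--Riemann--Roch to the projection $p_1:X^{[n]}\times X\to X^{[n]}$ and compare the result with the definition \eqref{eq:G-total} of $G(\alpha,n)$. By definition $L^{[n]}=p_{1*}(\mathcal O_{\mathcal Z_n}\otimes p_2^*L)$, and since $p_1|_{\mathcal Z_n}$ is finite and flat of degree $n$, the higher direct images vanish. Hence $L^{[n]}$ equals the $K$-theoretic pushforward $p_{1!}(\mathcal O_{\mathcal Z_n}\otimes p_2^*L)$.

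GRR for the smooth projective morphism $p_1$ gives
\[
\ch(L^{[n]})=p_{1*}\bigl(\ch(\mathcal O_{\mathcal Z_n})\cdot p_2^*\ch(L)\cdot \td(T_{p_1})\bigr).
\]
The relative tangent bundle of the product projection is $T_{p_1}=p_2^*T_X$, so $\td(T_{p_1})=p_2^*\td(X)$. Next expand
\[
\ch(L)=\1+c_1(L)+\frac{c_1(L)^2}{2},
\]
which terminates because $X$ is a surface. By linearity of \eqref{eq:G-total} in $\alpha$, this yields the identity of total classes
\[
\ch(L^{[n]})=G(\1,n)+G(c_1(L),n)+G\!\left(\tfrac{c_1(L)^2}{2},n\right).
\]

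It then remains to take the component of cohomological degree $2k$ on both sides. By definition $G_j(\alpha,n)$ is the degree-$(\deg\alpha+2j)$ part of $G(\alpha,n)$. For $\alpha=\1$, $c_1(L)$ and $c_1(L)^2/2$, of degrees $0$, $2$ and $4$, the degree-$2k$ parts are therefore $G_k(\1,n)$, $G_{k-1}(c_1(L),n)$ and $G_{k-2}(c_1(L)^2/2,n)$. When $j<0$ we set $G_j=0$, and this is consistent: the pushforward $p_{1*}$ lowers degree by $4$, so $G(\alpha,n)$ has no components of degree less than $\deg\alpha$. This gives \eqref{eq:GRR-decomposition}.

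The only step needing care is the identification $L^{[n]}=p_{1!}(\cdots)$ together with the use of GRR for the non-locally-free sheaf $\mathcal O_{\mathcal Z_n}$. This is handled by finite flatness of $\mathcal Z_n\to X^{[n]}$ and by the fact that $\ch(\mathcal O_{\mathcal Z_n})$ is defined through a finite locally free resolution on the smooth variety $X^{[n]}\times X$. Everything else is degree bookkeeping.
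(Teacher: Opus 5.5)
Your proposal is correct and is the paper's proof: GRR for $p_1$ applied to $L^{[n]}=p_{1*}(\mathcal O_{\mathcal Z_n}\otimes p_2^*L)$ gives $\ch(L^{[n]})=G(\ch(L),n)$; one then expands $\ch(L)=\1+c_1(L)+\frac12c_1(L)^2$ and takes the degree-$2k$ part. Your check of the convention $G_j=0$ for $j<0$ goes beyond the paper, but "$p_{1*}$ lowers degree by $4$" is not enough on its own: you also need that $\ch(\mathcal O_{\mathcal Z_n})$ has no components below cohomological degree $4$, which holds because $\mathcal Z_n$ has codimension $2$ in $X^{[n]}\times X$.
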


\begin{proof}
The projection $\mathcal Z_n\to X^{[n]}$ is finite and flat of degree $n$, and
\[
L^{[n]}=p_{1*}(\mathcal O_{\mathcal Z_n}\otimes p_2^*L).
\]
Grothendieck--Riemann--Roch gives
\begin{equation}
\ch(L^{[n]})
=p_{1*}\!\left(
\ch(\mathcal O_{\mathcal Z_n})
 p_2^*\ch(L)
 p_2^*\td(X)
\right)
=G(\ch(L),n).
\label{eq:GRR-total}
\end{equation}
On a surface,
$\ch(L)=1+c_1(L)+\frac12c_1(L)^2$.  Taking the component of degree $2k$ in \eqref{eq:GRR-total} proves \eqref{eq:GRR-decomposition}.  The same identity is used in~\cite{QinYu2018,AlhwaimelQin2024}.
\end{proof}

\begin{proof}[Proof of \Cref{thm:main}]
Put \(\ell_i=c_1(L_i)\) and
\[
W=\sum_{i=1}^N(k_i+2).
\]
We make the reduction to \Cref{thm:G-insertions} explicit.

\begin{enumerate}[label=\textup{(\arabic*)},ref=\textup{\arabic*},leftmargin=34pt]
\item\label{main:GRR-expand}
For \(\varepsilon\in\{0,1,2\}\), define
\[
j_i(\varepsilon)=k_i-\varepsilon,
\qquad
\alpha_i(\varepsilon)=
\begin{cases}
\1,&\varepsilon=0,\\
\ell_i,&\varepsilon=1,\\
\ell_i^2/2,&\varepsilon=2.
\end{cases}
\]
Call a choice \emph{admissible} if \(j_i(\varepsilon)\geq0\).  The GRR identity
\eqref{eq:GRR-decomposition} is precisely
\begin{equation}\label{eq:main-GRR-sum}
\ch_{k_i}(L_i^{[n]})
=
\sum_{\substack{\varepsilon_i\in\{0,1,2\}\\
j_i(\varepsilon_i)\geq0}}
G_{j_i(\varepsilon_i)}
 \bigl(\alpha_i(\varepsilon_i),n\bigr).
\end{equation}

\item\label{main:product-expand}
Multiply \eqref{eq:main-GRR-sum} for \(i=1,\ldots,N\), integrate against
\(c(T_{X^{[n]}})\), and sum over \(n\).  If
\(\boldsymbol\varepsilon=(\varepsilon_1,\ldots,\varepsilon_N)\), this gives
\begin{align}
\left\langle\prod_{i=1}^N\ch_{k_i}^{L_i}\right\rangle'
={}&
\sum_{\boldsymbol\varepsilon\ \text{admissible}}
\qpoch^{\chi(X)}
F_{\mathbf j(\boldsymbol\varepsilon)}
 ^{\boldsymbol\alpha(\boldsymbol\varepsilon)}(q),
\label{eq:main-universal-expansion}\\
F_{\mathbf j}^{\boldsymbol\alpha}(q)
:={}&
\sum_{n\geq0}q^n
\int_{X^{[n]}}
\left(\prod_{i=1}^NG_{j_i}(\alpha_i,n)\right)
c(T_{X^{[n]}}).
\notag
\end{align}
There are at most \(3^N\) summands, so the expansion is finite.  Notice that
the factor \(1/2\) in the third GRR branch is already included in
\(\alpha_i(2)=\ell_i^2/2\).

\item\label{main:apply-universal}
Apply \Cref{thm:G-insertions} separately to every summand in
\eqref{eq:main-universal-expansion}.  For a fixed admissible branch choice
$\boldsymbol\varepsilon$,
\begin{equation}\label{eq:main-branch-bound}
\qpoch^{\chi(X)}
F_{\mathbf j(\boldsymbol\varepsilon)}
 ^{\boldsymbol\alpha(\boldsymbol\varepsilon)}(q)
\in
\Fil_{\leq
\sum_{i=1}^N(j_i(\varepsilon_i)+2)}\QM.
\end{equation}

\item\label{main:compare-weights}
The contribution of the \(i\)-th branch to the bound in
\eqref{eq:main-branch-bound} is
\[
j_i(\varepsilon_i)+2
=k_i+2-\varepsilon_i
=
\begin{cases}
k_i+2,&\varepsilon_i=0,\\
k_i+1,&\varepsilon_i=1,\\
k_i,&\varepsilon_i=2.
\end{cases}
\]
Consequently every admissible branch choice has total mixed weight at most
\(W\).

\item\label{main:finite-sum}
The filtration \(\Fil_{\leq W}\QM\) is a \(\Q\)-vector space.  Therefore the
finite sum in \eqref{eq:main-universal-expansion} also lies in
\(\Fil_{\leq W}\QM\), which is exactly \eqref{eq:main-theorem}.
\end{enumerate}
\end{proof}

\section{Computing the correlation series}\label{sec:algorithm}

This section explains how \Cref{sec:operators,sec:wick,sec:quasielliptic,sec:main-proof} combine into an actual computation.  We first describe the complete chain of reductions, and then isolate a finite algorithm.  No proof from the earlier sections is repeated, but every operation used in the computation is specified.

\subsection{The quantity to be computed}

Assume throughout this section that $K_X$ is numerically trivial.  Fix an ordered list of homogeneous classes
$\boldsymbol\alpha=(\alpha_1,\ldots,\alpha_N)$ in $H=H^*(X,\Q)$ and nonnegative integers
$\mathbf j=(j_1,\ldots,j_N)$.  The corresponding correlation series is
\begin{equation}\label{eq:algorithm-F-function}
F_{\mathbf j}^{\boldsymbol\alpha}(q)
=\sum_{n\geq0}q^n
\int_{X^{[n]}}
\left(\prod_{i=1}^NG_{j_i}(\alpha_i,n)\right)
c(T_{X^{[n]}}).
\end{equation}
The order of these factors is fixed; this determines the Koszul signs when odd classes occur and also fixes the nested chamber \eqref{eq:algorithm-nested-chamber}.  Define the reduced series
\begin{equation}\label{eq:algorithm-reduced-F}
\widehat F_{\mathbf j}^{\boldsymbol\alpha}(q)
:=\qpoch^{\chi(X)}F_{\mathbf j}^{\boldsymbol\alpha}(q),
\qquad
W_F:=\sum_{i=1}^N(j_i+2).
\end{equation}
\Cref{thm:G-insertions} states that
\begin{equation}\label{eq:algorithm-F-output}
\widehat F_{\mathbf j}^{\boldsymbol\alpha}(q)
\in\Fil_{\leq W_F}\Q[E_2,E_4,E_6].
\end{equation}
Thus the finite graph calculation naturally produces $\widehat F$; the unreduced series \eqref{eq:algorithm-F-function} is recovered by multiplying by $\qpoch^{-\chi(X)}$.

The assumption on $K_X$ is used both in the current reduction
\eqref{eq:current-reduction} and in the simplified Carlsson--Okounkov
operator underlying the scalar kernels, as well as in the quasimodularity
statement \eqref{eq:algorithm-F-output}.  The intersection formula
\eqref{eq:small-diagonal-cohomological-contribution} is valid without
this assumption, but this alone does not extend the complete computation
below to arbitrary surfaces.  The rank example in
\Cref{prop:ch0-check} uses the same hypothesis and checks the coefficients
of the algorithm against the known rank series.

For tautological line bundles, the original quantity in Qin's conjecture is a finite sum of these reduced correlation series.  Indeed, if $\ell_i=c_1(L_i)$, then \Cref{prop:GRR-decomposition} gives
\begin{equation}\label{eq:algorithm-GRR-wrapper}
\left\langle\prod_{i=1}^N\ch_{k_i}^{L_i}\right\rangle'
=
\sum_{\boldsymbol\varepsilon\ \text{admissible}}
\widehat F_{\mathbf j(\boldsymbol\varepsilon)}
 ^{\boldsymbol\alpha(\boldsymbol\varepsilon)}(q),
\end{equation}
with $j_i(\varepsilon_i)$ and $\alpha_i(\varepsilon_i)$ defined in
\eqref{eq:main-GRR-sum}.  Consequently, the computation of Qin's series reduces to the correlation series defined in \eqref{eq:algorithm-F-function}.

\subsection{How the earlier ingredients fit together}\label{subsec:computational-reduction}

We now summarize the mathematical passage from \eqref{eq:algorithm-F-function} to a finite graph sum.

\begin{enumerate}[label=\textup{(\arabic*)},ref=\textup{\arabic*},leftmargin=34pt]
\item\emph{From the integral to a normalized supertrace.}
\Cref{prop:reduced-trace} identifies the reduced integral series with
the normalized categorical supertrace:
\begin{equation}\label{eq:algorithm-F-as-Omega}
\widehat F_{\mathbf j}^{\boldsymbol\alpha}(q)
=\Omega_{q,y}\!\left(
\prod_{i=1}^N\GG_{j_i}(\alpha_i)
\right).
\end{equation}

\item\label{alg:ingredient-currents}\emph{From cup-product operators to operator-valued series.}
By \Cref{prop:current-reduction}, each operator in \eqref{eq:algorithm-F-as-Omega} is the constant term of
\begin{equation}\label{eq:algorithm-current-summary}
\Ccur_j(\alpha;x)
=-\Vcur_{j+2}(\alpha;x)
+\frac1{24}\Scur_j(\eX\alpha;x)
-\frac1{12}\Vcur_j(\eX\alpha;x).
\end{equation}
The current $\Vcur_r(\gamma;x)$ is $1/r!$ times a normally ordered product of $r$ Heisenberg fields, evaluated on the tensor $\tau_{r*}\gamma$ as in \eqref{eq:V-current}.  Each summand of $\Scur_r$ has $D_x^2$ applied to exactly one of these $r$ fields.  Hence a term selected from \eqref{eq:algorithm-current-summary} is completely described by a \emph{current datum}
\begin{equation}\label{eq:current-datum}
v=(r,\gamma,\mathbf d,c).
\end{equation}
Here $r\geq1$ is the number of Heisenberg fields, equivalently the number of labeled half-edges at the corresponding graph vertex; $\gamma\in H$ is the class inserted into the small diagonal $\tau_{r*}\gamma$; $\mathbf d=(d_1,\ldots,d_r)\in\{0,2\}^r$ records the derivative order on each field; and $c\in\Q$.  In the notation of \eqref{eq:tensor-current}, this datum specifies the term
\begin{equation}\label{eq:current-datum-term}
c\,\mathcal J(\tau_{r*}\gamma,\mathbf d;x)
=\frac{c}{r!}
:\prod_{h=1}^r D_x^{d_h}\Afield(x):\,(\tau_{r*}\gamma).
\end{equation}
Thus the coefficient of the displayed normally ordered product is
$c/r!$.  When referring to the class attached to a current or its graph
vertex, we mean precisely $\gamma$ in \eqref{eq:current-datum-term}.

By \Cref{lem:trace-CT-interchange}, if
\[
\Phi_{\mathbf j}^{\boldsymbol\alpha}(x_0,\ldots,x_N;q)
:=\Omega_{q,x_0}\!\left(
\prod_{i=1}^N\Ccur_{j_i}(\alpha_i;x_i)
\right),
\]
then
\begin{equation}\label{eq:algorithm-Phi-CT}
\widehat F_{\mathbf j}^{\boldsymbol\alpha}(q)
=[x_1^0\cdots x_N^0]
\Phi_{\mathbf j}^{\boldsymbol\alpha}(1,x_1,\ldots,x_N;q).
\end{equation}

\item\emph{Wick expansion of the selected currents.}
The shifted trace relation \eqref{eq:shifted-trace} gives the normalized trace of one Nakajima operator in \eqref{eq:mode-mean}.  After this scalar part is subtracted, \Cref{prop:graded-wick-recursion} shows that a product of an odd number of centered operators has zero normalized trace, while a product of an even number is a signed sum over pairings.  Normal ordering changes only pairs whose two operators belong to the same current; by \Cref{lem:normal-ordering-subtraction}, their two-point trace is replaced by $C-C^0$.  Thus \eqref{eq:wick-current-formula} expands a product of selected currents into a finite sum indexed by partitions of their labeled half-edges into singletons and pairs.

\item\emph{Scalar kernels and graph organization.}
\Cref{cor:basis-free-graphs} is the main computational result used below:
it evaluates the normalized trace of the selected currents as a finite
sum of cohomological contributions $\mathcal I_\Gamma$ times associated
functions $\mathscr F_\Gamma$, with coefficient $\prod_i c_i/r_i!$.
Write
\[
K_\Gamma(x_0,\ldots,x_N;q)
=\mathscr F_\Gamma(z_0,\ldots,z_N;\tau),
\qquad x_i=e^{2\pi iz_i}.
\]
This is the associated function used in \Cref{alg:F-kernels}; by
\Cref{def:wick-graph}, it is built only from $\zhat$, $P$, $T_s$,
and derivatives of $\zhat$ and $P$, through the factors
\begin{equation}\label{eq:algorithm-three-kernels}
D^d\zhat(x_i/x_0),
\qquad
-D^{d+e}P(x_j/x_i),
\qquad
-2T_{d+e+2}(q).
\end{equation}
These correspond to singletons, intervertex pairs, and loops,
respectively; \Cref{alg:F-sum} combines their constant terms with
$\mathcal I_\Gamma$ and the current coefficients.

\item\emph{Constant term and final evaluation.}
The scalar kernel product is expanded in the nested chamber
\begin{equation}\label{eq:algorithm-nested-chamber}
|q|<|x_j/x_i|<1\qquad(0\leq i<j\leq N),
\quad x_0=y.
\end{equation}
Taking the simultaneous constant term in $x_1,\ldots,x_N$ implements the zero-mode extraction in every current and therefore produces \eqref{eq:algorithm-Phi-CT}.  The result can be evaluated directly from the Fourier series \eqref{eq:Zhat-Fourier}--\eqref{eq:P-Fourier}.  Alternatively, \Cref{thm:GM} shows that the chamber constant term of every associated quasi-elliptic graph function is quasimodular, while \Cref{cor:current-weight} supplies the bound $W_F$.  This is the step that permits reconstruction from finitely many Fourier coefficients.
\end{enumerate}

\subsection{A finite algorithm for one reduced correlation series}\label{subsec:finite-algorithm}

The input consists of the graded $\Q$-algebra $H^*(X,\Q)$ with its unit $\1$, Euler class $\eX$, Poincar\'e pairing, and integration map, together with the ordered pairs $(j_i,\alpha_i)$ from \eqref{eq:algorithm-F-function}.  If an expansion in $q$ to finite order is desired, also choose $Q\geq0$ and work modulo $q^{Q+1}$.  The output is the reduced correlation series $\widehat F_{\mathbf j}^{\boldsymbol\alpha}$, either to that order or as a polynomial in $E_2,E_4,E_6$.  The procedure has Steps~F1--F5.  The tautological reduction in \Cref{subsec:tautological-wrapper} has separate labels T1--T2.

\begin{correlationsteps}
\item\label{alg:F-current-data}
\emph{Enumerate the current data.}
For each input pair $(j_i,\alpha_i)$, form the finite set
\begin{equation}\label{eq:algorithm-current-data-set}
\mathcal R(j_i,\alpha_i)=
\left\{
\begin{array}{ll}
A=(j_i+2,\alpha_i,\mathbf0,-1),&\\
B_h=(j_i,\eX\alpha_i,2\mathbf e_h,1/24),&1\leq h\leq j_i,\\
C=(j_i,\eX\alpha_i,\mathbf0,-1/12),&j_i\geq1.
\end{array}
\right.
\end{equation}
Here $\mathbf0\in\Z^{r}$ is the zero vector of the relevant length and $\mathbf e_h$ is its $h$-th standard basis vector.  These are precisely the terms of item~(\ref{alg:ingredient-currents}) in \Cref{subsec:computational-reduction}: substituting each datum into \eqref{eq:current-datum-term} gives
\[
\begin{array}{c|c}
\text{datum }v&c\,\mathcal J(\tau_{r*}\gamma,\mathbf d;x_i)\\ \hline
A&-\Vcur_{j_i+2}(\alpha_i;x_i)\\[1mm]
B_h&\displaystyle\frac1{24}
\mathcal J(\tau_{j_i*}(\eX\alpha_i),2\mathbf e_h;x_i)\\[1mm]
C&\displaystyle-\frac1{12}\Vcur_{j_i}(\eX\alpha_i;x_i).
\end{array}
\]
In particular, each $B_h$ specifies which one of the $j_i$ fields is
differentiated twice; the sum over $h$, rather than an individual
$B_h$, gives the $\Scur_{j_i}$ term:
\[
\frac1{24}\sum_{h=1}^{j_i}
\mathcal J(\tau_{j_i*}(\eX\alpha_i),2\mathbf e_h;x_i)
=\frac1{24}\Scur_{j_i}(\eX\alpha_i;x_i).
\]
Equivalently,
\begin{equation}\label{eq:algorithm-current-data-expansion}
\Ccur_{j_i}(\alpha_i;x_i)
=\sum_{v=(r,\gamma,\mathbf d,c)\in\mathcal R(j_i,\alpha_i)}
c\,\mathcal J(\tau_{r*}\gamma,\mathbf d;x_i).
\end{equation}
The $B_h$ and $C$ data are absent for $j_i=0$, in accordance with
$\Vcur_0=\Scur_0=0$; their formulas above apply only for $j_i\geq1$.
Discard a datum if its class $\gamma$ is zero in $H^*(X)$.  A \emph{current selection} is an ordered tuple
$\mathbf v=(v_1,\ldots,v_N)$ with $v_i\in\mathcal R(j_i,\alpha_i)$.  If
$v_i=(r_i,\gamma_i,\mathbf d_i,c_i)$, then $r_i$ is now defined to be the number of labeled half-edges at vertex $i$.

\item\label{alg:F-graphs}
\emph{Enumerate the Wick graphs and compute their cohomological contributions.}
For a fixed current selection, set
\[
\mathsf H_i=\{(i,h):1\leq h\leq r_i\},
\qquad
\mathsf H=\coprod_{i=1}^N\mathsf H_i.
\]
Enumerate every partition $\Gamma$ of $\mathsf H$ into parts of size one or two.  The elements of $\mathsf H_i$ remain labeled.  A one-element part is a singleton; a two-element part joining $\mathsf H_i$ and $\mathsf H_j$ with $i\neq j$ is an intervertex edge; and a two-element part contained in $\mathsf H_i$ is a loop at vertex $i$.  The symbol $\mathsf H$ here denotes a finite set, distinct from the cohomology algebra $H$.

Set $T_i=\tau_{r_i*}\gamma_i$ and compute $\mathcal I_\Gamma$ from
\eqref{eq:cohomological-contribution}.  This formula includes the Koszul sign
and applies to both even and odd cohomology classes.

When all $\gamma_i$ are even, compute the same scalar directly from
\Cref{cor:small-diagonal-cohomological-contribution}:
\begin{equation}\label{eq:algorithm-cohomological-contribution}
\mathcal I_\Gamma
=\prod_{C\in\pi_0(\overline\Gamma)}
\int_X\eX^{g_C}\prod_{i\in V(C)}\gamma_i.
\end{equation}
For each connected component, compute
$g_C=|E(C)|-|V(C)|+1$, multiply its classes $\gamma_i$ and $g_C$ copies
of $\eX$ in $H^*(X,\Q)$, and integrate the result.  The product of these
numbers is \eqref{eq:algorithm-cohomological-contribution}.  Discard
$\Gamma$ if any factor vanishes.

The degree condition gives a quick preliminary test: a connected
component $C$ can contribute only if
\begin{equation}\label{eq:algorithm-component-degree}
4g_C+\sum_{i\in V(C)}\deg\gamma_i=4.
\end{equation}
For example, if all its classes are $\1$, this requires $g_C=1$.
Satisfying the condition alone does not guarantee that the corresponding
intersection number is nonzero; that number must still be evaluated.

\item\label{alg:F-kernels}
\emph{Attach the scalar kernels and extract the chamber constant term.}
For $a=(i,h)\in\mathsf H_i$, write $d_a=d_{i,h}$ and set $x_0=y$.  Attach the following to each part of $\Gamma$:
\[
\begin{array}{c|c}
\text{part}&\text{scalar factor}\\ \hline
\{a\},\ a\in\mathsf H_i&D^{d_a}\zhat(x_i/x_0)\\
\{a,b\},\ a\in\mathsf H_i,\ b\in\mathsf H_j,\ i<j
&-D^{d_a+d_b}P(x_j/x_i)\\
\{a,b\}\subset\mathsf H_i&-2T_{d_a+d_b+2}(q).
\end{array}
\]
Let $K_\Gamma(x_0,\ldots,x_N;q)$ be the product of these factors and expand it in the nested chamber \eqref{eq:algorithm-nested-chamber}.  Define
\begin{equation}\label{eq:algorithm-scalar-CT}
\CT_\Gamma(q)
=[x_1^0\cdots x_N^0]
K_\Gamma(1,x_1,\ldots,x_N;q).
\end{equation}
The use of labeled half-edges is important: the coefficient $c_i/r_i!$
from \eqref{eq:current-datum-term} is included once for each current,
while a labeled loop contributes $-2T_s$.  There is no additional
division by the number of graph automorphisms.

\item\label{alg:F-sum}
\emph{Sum the graph contributions.}
The contribution of a current selection $\mathbf v$ and a Wick graph $\Gamma$ is
\begin{equation}\label{eq:algorithm-contribution}
\operatorname{Cont}(\mathbf v,\Gamma)
=\left(\prod_{i=1}^N\frac{c_i}{r_i!}\right)
\mathcal I_\Gamma(\gamma_1,\ldots,\gamma_N)
\CT_\Gamma(q).
\end{equation}
Sum over all current selections and all retained Wick graphs:
\begin{equation}\label{eq:algorithm-F-sum}
\widehat F_{\mathbf j}^{\boldsymbol\alpha}(q)
=\sum_{\mathbf v}
\sum_{\Gamma}
\operatorname{Cont}(\mathbf v,\Gamma).
\end{equation}
Both sums are finite.  Equation \eqref{eq:algorithm-F-sum} is the desired reduced correlation series; multiply by $\qpoch^{-\chi(X)}$ only if the unreduced series in \eqref{eq:algorithm-F-function} is required.

\item\label{alg:F-reconstruct}
\emph{Evaluate the scalar constant terms.}
For an expansion modulo $q^{Q+1}$, keep only mode indices $|n|\leq Q$ in $\zhat$ and $P$, and summation indices $n\leq Q$ in $T_s$.  Truncate the coefficients at order $Q$, multiply the resulting Laurent polynomials, and retain exponent $(0,\ldots,0)$.

The cutoff also applies to the positive modes, whose coefficients can have nonzero constant term in $q$.  To see this, represent a singleton at vertex $i$ by an edge from $0$ to $i$, and orient every intervertex edge from the smaller to the larger vertex.  A mode $n$ gives the factor $(x_j/x_i)^n$ on such an edge.  Constant-term extraction imposes conservation of these signed modes at every vertex; conservation at vertex $0$ follows because each monomial has total exponent zero.  Across any cut $\{0,\ldots,k\}$, the total positive mode equals the total absolute negative mode.  Each negative mode of size $n$ costs at least $q^n$ by \eqref{eq:Zhat-Fourier}--\eqref{eq:P-Fourier}.  Thus, in a term of $q$-degree at most $Q$, their total size is at most $Q$.  Every positive edge crosses a cut, so its mode is at most $Q$ as well.  Finally, each summand of a loop series $T_s$ starts at $q^n$.  This proves that the prescribed extraction is finite.

For a closed quasimodular expression, use the finite monomial set
\begin{equation}\label{eq:algorithm-QM-basis}
\mathcal M_{W_F}=
\left\{E_2^aE_4^bE_6^c:
a,b,c\geq0,\ 2a+4b+6c\leq W_F\right\}.
\end{equation}
Increase $Q$ until the Fourier-coefficient matrix of $\mathcal M_{W_F}$ has full column rank, solve the resulting linear system, and verify the solution at several additional orders.  Membership in \eqref{eq:algorithm-F-output} guarantees that this reconstruction terminates.
\end{correlationsteps}

\subsection{Reduction for tautological line bundles}\label{subsec:tautological-wrapper}

For completeness, we state separately how the five steps of
\Cref{subsec:finite-algorithm} compute Qin's original series.  The two operations below are labeled T1--T2, to distinguish them from the correlation steps F1--F5.  This is
exactly the GRR reduction \eqref{eq:main-universal-expansion} in the proof
of \Cref{thm:main}.

\begin{tautologicalsteps}
\item\label{alg:T-GRR}
For each pair $(k_i,L_i)$, put $\ell_i=c_1(L_i)$ and form the set of admissible GRR branches
\[
\mathcal B_i=
\left\{
(k_i,\1),\ (k_i-1,\ell_i),\
\left(k_i-2,\frac{\ell_i^2}{2}\right)
\right\},
\]
omitting a pair whose first entry is $<0$.  A \emph{GRR branch choice} (the term used throughout for this object) is an ordered tuple
\[
\mathbf b=((j_1,\alpha_1),\ldots,(j_N,\alpha_N))
\in\prod_{i=1}^N\mathcal B_i.
\]

\item\label{alg:T-sum}
For every GRR branch choice $\mathbf b$, compute
$\widehat F_{\mathbf j}^{\boldsymbol\alpha}$ by the five steps of
\Cref{subsec:finite-algorithm} and add the results.  By
\eqref{eq:algorithm-GRR-wrapper}, the sum is
\begin{equation}\label{eq:algorithm-output}
\mathcal Q_{\mathbf k,\mathbf L}^X(q)
:=
\left\langle\prod_{i=1}^N\ch_{k_i}^{L_i}\right\rangle'
\in
\Fil_{\leq\sum_i(k_i+2)}\Q[E_2,E_4,E_6].
\end{equation}
For tautological line-bundle factors the numerical input from $X$ reduces to
\[
\chi(X)=\int_X\eX,
\qquad
I_{ij}=\int_X\ell_i\ell_j.
\]
Indeed, $\eX\ell_i=\eX\ell_i^2=0$ on a surface, so the $B_h$ and $C$
terms can be nonzero only when the selected GRR class is $\alpha_i=\1$;
the class $\gamma_i$ of either correction term is then $\eX$.
For arbitrary classes $G_j(\alpha,n)$, use the product, Poincar\'e pairing,
unit, Euler class, and integration map on $H^*(X,\Q)$ specified as input
to \Cref{subsec:finite-algorithm}.
\end{tautologicalsteps}

\section{Examples: low-degree calculations}\label{sec:checks}

We compute the rank term, a one-point series, and a two-point series.
The examples illustrate the three parts of the procedure in
\Cref{sec:algorithm}: selecting the nonzero cohomological contractions,
evaluating their chamber constant terms, and identifying the resulting
quasimodular form.  We retain labeled half-edges, so every multiplicity is
combined with the factors $1/r_i!$ in \eqref{eq:tensor-current}.

\subsection{Okounkov's one-variable generators}

For multiple-$q$-zeta series we use Okounkov's notation
~\cite[Section~1.2, Equation~(2)]{Okounkov2014}, with a bold letter
$\mathbf Z$.  Thus $\mathbf Z(2)$, $\mathbf Z(4)$, and $\mathbf Z(6)$
are $q$-series, distinct from the quasi-elliptic function $Z(z)$ used above.
The three one-variable series needed below are
\begin{align}
\OZ{2}
&=\sum_{n>0}\frac{q^n}{(1-q^n)^2},
\label{eq:Ok-Z2}\\
\OZ{4}
&=\sum_{n>0}\frac{q^{2n}}{(1-q^n)^4},
\label{eq:Ok-Z4}\\
\OZ{6}
&=\sum_{n>0}\frac{q^{3n}}{(1-q^n)^6}.
\label{eq:Ok-Z6}
\end{align}
We will also use the depth-two series
\begin{equation}\label{eq:Ok-Z22}
\OZ{2,2}
=\sum_{n_1>n_2>0}
\frac{q^{n_1+n_2}}{(1-q^{n_1})^2(1-q^{n_2})^2}.
\end{equation}
Separating the two summation indices according as
$n_1>n_2$, $n_1<n_2$, or $n_1=n_2$ gives
\begin{equation}\label{eq:Ok-product22}
\OZ{2}^2=2\OZ{2,2}+\OZ{4}.
\end{equation}  We have
\begin{equation}\label{eq:T2-OkZ2}
T_2=\OZ{2}=\frac{1-E_2}{24}.
\end{equation}
The next elementary identity will be used repeatedly.

\begin{lemma}\label[lemma]{lem:DZ2-correct}
\begin{equation}\label{eq:DZ2-correct}
\Dq\OZ{2}
=5\OZ{4}-2\OZ{2}^{2}+\OZ{2}.
\end{equation}
Equivalently,
\begin{equation}\label{eq:DZ2-Eisenstein}
\Dq\OZ{2}=\frac{E_4-E_2^2}{288}.
\end{equation}
\end{lemma}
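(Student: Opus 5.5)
The plan is to pass everything to Eisenstein series and then use Ramanujan's identity for $\Dq E_2$. Both forms of the lemma follow once $\OZ{2}$, $\OZ{2}^2$ and $\OZ{4}$ are written as polynomials in $E_2,E_4$.

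\emph{Step 1: expand $\OZ{2}$.} Expanding $q^n/(1-q^n)^2=\sum_{k\ge1}k\,q^{kn}$ and summing over $n$ gives
\[
\OZ{2}=\sum_{m\ge1}\sigma_1(m)q^m=\frac{1-E_2}{24}.
\]
This is \eqref{eq:T2-OkZ2}, together with the normalization $E_2=1-24\sum\sigma_1(m)q^m$. Ramanujan's identity $\Dq E_2=(E_2^2-E_4)/12$ then gives
\[
\Dq\OZ{2}=-\frac{1}{24}\Dq E_2=\frac{E_4-E_2^2}{288}.
\]
This is \eqref{eq:DZ2-Eisenstein}.

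\emph{Step 2: expand $\OZ{4}$.} Use $q^{2n}/(1-q^n)^4=\sum_{k\ge1}\binom{k+1}{3}q^{kn}$ and $\binom{k+1}{3}=(k^3-k)/6$. Summing over $n$ gives
\[
\OZ{4}=\frac16\sum_{m\ge1}\bigl(\sigma_3(m)-\sigma_1(m)\bigr)q^m .
\]
With $E_4=1+240\sum\sigma_3(m)q^m$, this becomes
\[
\OZ{4}=\frac{E_4-1}{1440}-\frac{1-E_2}{144}.
\]

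\emph{Step 3: compare.} Multiply the right-hand side of \eqref{eq:DZ2-correct} by $288$. Using $\OZ{2}^2=(1-E_2)^2/576$, the terms are
\[
288\cdot 5\OZ{4}=(E_4-1)-10(1-E_2),\qquad
-288\cdot 2\OZ{2}^2=-(1-2E_2+E_2^2),\qquad
288\,\OZ{2}=12(1-E_2).
\]
Their sum is
\[
E_4-1-10+10E_2-1+2E_2-E_2^2+12-12E_2=E_4-E_2^2 .
\]
This matches Step 1 and proves both \eqref{eq:DZ2-correct} and its equivalence with \eqref{eq:DZ2-Eisenstein}.

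No step is a real obstacle; the only external input is Ramanujan's derivative formula for $E_2$. The only care needed is the binomial expansion $(1-x)^{-4}=\sum_j\binom{j+3}{3}x^j$, shifted by $x^2$, which must yield the coefficient $\binom{k+1}{3}$ correctly. If one preferred to avoid Ramanujan's formula, an alternative is a direct $q$-series manipulation: differentiate $\sum_k k\,q^{kn}$ to get $\sum\sigma_1(m)m\,q^m$, and compare with the convolution $\OZ{2}^2$ through the classical identity $\sum_{a+b=m}\sigma_1(a)\sigma_1(b)=\frac{5\sigma_3(m)+(1-6m)\sigma_1(m)}{12}$. This route is essentially the same computation.
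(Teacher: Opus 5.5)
Your proposal is correct and follows essentially the paper's route: express $\OZ{2}$ and $\OZ{4}$ through divisor sums, convert them to $E_2$ and $E_4$, and apply Ramanujan's identity $\Dq E_2=(E_2^2-E_4)/12$. Your binomial expansion $\binom{k+1}{3}=(k^3-k)/6$ and the final arithmetic check out, and they reproduce the paper's relations $E_2=1-24\OZ{2}$ and $E_4=1+240\OZ{2}+1440\OZ{4}$.
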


\begin{proof}
Write $\sigma_s(m)=\sum_{d\mid m}d^s$.  The definitions give
\[
\OZ{2}=\sum_{m\geq1}\sigma_1(m)q^m,
\qquad
\OZ{4}=\frac16\sum_{m\geq1}\bigl(\sigma_3(m)-\sigma_1(m)\bigr)q^m.
\]
Hence
\begin{equation}\label{eq:Ok-Eisenstein-relations}
E_2=1-24\OZ{2},
\qquad
E_4=1+240\OZ{2}+1440\OZ{4}.
\end{equation}
Ramanujan's differential identity
\begin{equation}\label{eq:Ramanujan-E2}
\Dq E_2=\frac{E_2^2-E_4}{12}
\end{equation}
(see, for example,~\cite{KanekoZagier1995}), together with \eqref{eq:Ok-Eisenstein-relations}, gives \eqref{eq:DZ2-correct} and \eqref{eq:DZ2-Eisenstein}.
\end{proof}

The following four constant terms are the scalar evaluations needed below.

\begin{lemma}\label[lemma]{lem:elementary-CTs}
In the chamber $|q|<|x_i/y|<1$ and $|q|<|x_2/x_1|<1$,
\begin{align}
[x^0]\zhat(x/y)^2
&=-2T_2,
\label{eq:CT-Zhat2}\\*
[x^0]\zhat(x/y)D_x^2\zhat(x/y)
&=-2\Dq T_2,
\label{eq:CT-Zhat-D2Zhat}\\*
[x_1^0x_2^0]\zhat(x_1/y)\zhat(x_2/y)P(x_2/x_1)
&=-\Dq T_2.
\label{eq:CT-ZZP}
\end{align}
Also define the series
\begin{equation}\label{eq:H-definition}
\mathcal H(q)
=\frac12[x_1^0x_2^0]
\zhat(x_1/y)\zhat(x_2/y)P(x_2/x_1)^2.
\end{equation}

It satisfies
\begin{align}
\mathcal H(q)
={}&-\frac54\OZ{2}^2-\frac54\OZ{4}
+\frac{10}{3}\OZ{2}^3
-5\OZ{2}\OZ{4}
-\frac{35}{6}\OZ{6}.
\label{eq:H-polynomial}
\end{align}
\end{lemma}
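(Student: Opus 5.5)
The plan is to work directly with the Fourier expansions \eqref{eq:Zhat-Fourier}--\eqref{eq:P-Fourier} in the stated chamber. Each constant term then becomes a sum over matched mode indices, and the resulting divisor-type sums are identified with $T_2$, $\Dq T_2$, or Okounkov's series. Writing $u=x/y$, the product $\zhat(u)^2$ has constant term only from pairing $u^n$ with $u^{-n}$. The two orientations each give $-q^n/(1-q^n)^2$, so
\[
[x^0]\zhat(x/y)^2=-2\sum_{n\ge1}\frac{q^n}{(1-q^n)^2}=-2\OZ{2}=-2T_2,
\]
using $\sum_n nq^n/(1-q^n)=\sum_n q^n/(1-q^n)^2$.

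For \eqref{eq:CT-Zhat-D2Zhat} the same pairing occurs, but $D_x^2$ inserts a factor $n^2$. The sum becomes $-2\sum_n n^2q^n/(1-q^n)^2$, and one checks directly that this equals $-2\Dq T_2$, since $\Dq\sum_m\sigma_1(m)q^m=\sum_m m\sigma_1(m)q^m=\sum_{n,k}n^2kq^{nk}$.

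For \eqref{eq:CT-ZZP} I would expand all three factors and impose conservation of modes. With a mode $a$ on $\zhat(x_1/y)$, a mode $b$ on $\zhat(x_2/y)$, and a mode $c$ on $P(x_2/x_1)$, the exponent of $x_1$ is $a-c$ and that of $x_2$ is $b+c$, so $a=c$ and $b=-c$. Hence $n:=|c|$ forces the pattern $(a,b,c)=(n,-n,n)$ or $(-n,n,-n)$. These give
\[
\frac{1}{1-q^n}\cdot\Bigl(-\frac{q^n}{1-q^n}\Bigr)\cdot\frac{n}{1-q^n}
\quad\text{and}\quad
\Bigl(-\frac{q^n}{1-q^n}\Bigr)\cdot\frac{1}{1-q^n}\cdot\frac{nq^n}{1-q^n}.
\]
Their sum is $-nq^n(1+q^n)/(1-q^n)^3$. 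The identity $\sum_n nq^n(1+q^n)/(1-q^n)^3=\sum_n n^2q^n/(1-q^n)^2=\Dq T_2$ follows from $q\frac{d}{dq}\frac{q^n}{(1-q^n)^2}=\frac{nq^n(1+q^n)}{(1-q^n)^3}$, which gives $\Dq\OZ2$. Since $\OZ2=T_2$, this yields $-\Dq T_2$.

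The main obstacle is \eqref{eq:H-polynomial}. Now $P^2$ carries two modes $c_1,c_2$, the conservation laws give $a=c_1+c_2=-b$, and $a\neq0$ because $\zhat$ has no zero mode. The result is a genuine double sum over $(c_1,c_2)$, split into sign cases (both positive, both negative, mixed with $|c_1|\ne|c_2|$). I would evaluate each case using partial fractions in $q^{n_1},q^{n_2}$, and convert the double sums with $n_1>n_2$ into $\OZ{2,2}$ together with depth-one series. The relation \eqref{eq:Ok-product22} then removes $\OZ{2,2}$ in favor of $\OZ2^2$ and $\OZ4$, and \Cref{lem:DZ2-correct} absorbs any $\Dq\OZ2$ terms.

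If the bookkeeping becomes unwieldy, I would use the alternative route suggested by the structure of the paper. \Cref{thm:GM} shows that $\mathcal H\in\Fil_{\le 6}\QM$, since it has weight $1+1+2+2$. It therefore suffices to compare finitely many $q$-coefficients, up to about $q^{8}$ against the monomials of weight at most $6$, with those of the right-hand side. That right-hand side also lies in $\Fil_{\le6}\QM$ by \eqref{eq:Ok-Eisenstein-relations} together with a similar expression for $\OZ6$ in terms of $\sigma_5,\sigma_3,\sigma_1$. This verification is a finite computation.
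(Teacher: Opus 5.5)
Your evaluation of \eqref{eq:CT-Zhat2}--\eqref{eq:CT-ZZP} by matching Fourier modes is the paper's argument. Your fallback for \eqref{eq:H-polynomial} is also exactly how the paper proves it: the Goujard--M\"oller theorem places $\mathcal H$ in the seven-dimensional space $\Fil_{\le6}\QM$, so finitely many coefficients determine it; the direct partial-fraction route is not used there and, as you describe it, is too vague to rely on, since the double sums carry polynomial weights such as $a(r-a)$ and denominators $1-q^{a+r}$ and so are not directly of the form $\OZ{2,2}$. What you still owe is the finite computation itself, which the paper carries out from the convolution for $[u^r]P(u)^2$, obtaining $\mathcal H=-\tfrac52q^2-20q^3-75q^4-200q^5-450q^6+O(q^7)$, and then, after noting that the coefficients of $1$ and $\OZ{2}$ must vanish, solving an invertible $5\times5$ linear system.
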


\begin{proof}
The change of variables $x_i\mapsto x_i/y$ permits us to set $y=1$.
For $n>0$, the Fourier coefficients of $\zhat$ are
\[
\zhat_n=\frac1{1-q^n},\qquad
\zhat_{-n}=-\frac{q^n}{1-q^n}.
\]
Thus the modes $n$ and $-n$ give
\[
[x^0]\zhat(x)^2
=-2\sum_{n>0}\frac{q^n}{(1-q^n)^2}=-2T_2.
\]
Applying $D_x^2$ to the second factor inserts $n^2$ in each summand.
Differentiating the definition of $T_2$ in \eqref{eq:loop-series-def} gives
\[
\Dq T_2=\sum_{n>0}\frac{n^2q^n}{(1-q^n)^2},
\]
which proves \eqref{eq:CT-Zhat-D2Zhat}.

For \eqref{eq:CT-ZZP}, a simultaneous constant term forces the
three mode indices to be $(r,-r,r)$, in the order
$\zhat(x_1),\zhat(x_2),P(x_2/x_1)$.  Adding the orientations
$r=n$ and $r=-n$ gives
\begin{align*}
[x_1^0x_2^0]\zhat(x_1)\zhat(x_2)P(x_2/x_1)
&=-\sum_{n>0}\frac{nq^n(1+q^n)}{(1-q^n)^3}\\
&=-\sum_{n,k>0}nk^2q^{nk}
=-\sum_{n,k>0}n^2kq^{nk}
=-\Dq T_2.
\end{align*}
Here $t(1+t)/(1-t)^3=\sum_{k>0}k^2t^k$, and the middle equality
interchanges $n$ and $k$.  These rearrangements are valid coefficientwise:
only finitely many pairs $(n,k)$ contribute to any fixed power of $q$.

For the fourth constant term, we compute the coefficients of $\mathcal H$
and then determine its quasimodular expression.  Set $y=1$ and let $b_r=[u^r]P(u)^2$.
The simultaneous constant term imposes mode indices $(r,-r,r)$, so
\begin{equation}\label{eq:H-mode-extraction}
\mathcal H(q)
=\frac12\sum_{r\neq0}
\zhat_r(q)\zhat_{-r}(q)\,[u^r]P(u)^2,
\end{equation}
where $\zhat_r=(1-q^r)^{-1}$ and
$\zhat_{-r}=-q^r/(1-q^r)$ for $r>0$.
The coefficients $P_r(q)=[u^r]P(u)$ satisfy $P_{-r}=q^rP_r$, which gives
$b_{-r}=q^rb_r$.  Splitting the convolution for $b_r$ into two positive
modes and two modes of opposite signs yields, for $r>0$,
\begin{align}
b_r={}&\sum_{a=1}^{r-1}
\frac{a(r-a)}{(1-q^a)(1-q^{r-a})}
+2\sum_{a\geq1}
\frac{a(a+r)q^a}{(1-q^a)(1-q^{a+r})},
\label{eq:H-convolution}\\
\mathcal H(q)={}&-\frac12\sum_{r\geq1}
\frac{q^r(1+q^r)}{(1-q^r)^2}\,b_r.
\label{eq:H-positive-modes}
\end{align}
These formulas give a finite calculation modulo $q^{Q+1}$: take
$r\leq Q$, and in the second sum of \eqref{eq:H-convolution} take
$a\leq Q-r$.  Since $b_1=4q+O(q^2)$ and $b_2=1+O(q)$, the first
nonzero coefficient is $-2-1/2=-5/2$ at $q^2$.  Continuing the same
finite expansion gives
\begin{equation}\label{eq:H-expansion}
\mathcal H(q)
=-\frac52q^2-20q^3-75q^4-200q^5-450q^6-840q^7+O(q^8).
\end{equation}

Every monomial in the defining product has total exponent zero in
$y,x_1,x_2$.  Its constant term in $x_1,x_2$ is therefore also its
constant term in all three variables, as required in \Cref{thm:GM}.
The two factors $\zhat$ have mixed weight at most one each, and the
two factors $P$ have weight two each.  Hence their product has mixed
weight at most $1+1+2+2=6$.  Applying \Cref{thm:GM} directly gives
$\mathcal H\in\Fil_{\leq6}\QM$, independently of the
two-point calculation below.
To specify a basis for this filtered space, besides
\eqref{eq:Ok-Eisenstein-relations} use the coefficient identity
\[
\OZ{6}=\frac1{120}\sum_{n\geq1}
\bigl(\sigma_5(n)-5\sigma_3(n)+4\sigma_1(n)\bigr)q^n,
\]
which follows by expanding $t^3/(1-t)^6$.
It implies
\begin{equation}\label{eq:Ok-E6-relation}
E_6=1-504\OZ{2}-15120\OZ{4}-60480\OZ{6}.
\end{equation}
The triangular change of generators from $E_2,E_4,E_6$ therefore shows
that
\[
1,\quad\OZ{2},\quad\OZ{2}^2,\quad\OZ{4},\quad
\OZ{2}^3,\quad\OZ{2}\OZ{4},\quad\OZ{6}
\]
is a basis of $\Fil_{\leq6}\QM$.  Since $\mathcal H$ has zero constant
and linear coefficients, and only $1$ and $\OZ{2}=q+O(q^2)$ detect
those coefficients in this basis, write
\[
\mathcal H
=c_1\OZ{2}^2+c_2\OZ{4}+c_3\OZ{2}^3
+c_4\OZ{2}\OZ{4}+c_5\OZ{6}.
\]
The coefficients of $q^2,\ldots,q^6$ give the exact system
\begin{equation}\label{eq:H-reconstruction-matrix}
\begin{pmatrix}
1&1&0&0&0\\
6&4&1&1&1\\
17&11&9&7&6\\
38&20&39&27&21\\
70&40&120&76&57
\end{pmatrix}
\begin{pmatrix}c_1\\c_2\\c_3\\c_4\\c_5\end{pmatrix}
=
\begin{pmatrix}-5/2\\-20\\-75\\-200\\-450\end{pmatrix}.
\end{equation}
Its determinant is $72$, and its unique solution is
\[
(c_1,c_2,c_3,c_4,c_5)
=\left(-\frac54,-\frac54,\frac{10}{3},-5,-\frac{35}{6}\right).
\]
Because membership in this five-dimensional space has already been
proved, invertibility establishes \eqref{eq:H-polynomial} as an
identity of complete series.  The coefficient at $q^7$ is an additional
arithmetic check, not an assumption in the reconstruction.
\end{proof}

\subsection{The rank insertion and the loop coefficient}

The rank insertion gives the simplest check of the coefficients in
\Cref{sec:algorithm}: it has one current and only two labeled Wick
graphs.  The answer is the known rank identity of
~\cite[Remark~4.12; see also Proposition~4.11]{QinYu2018}.

\begin{proposition}\label[proposition]{prop:ch0-check}
Assume that $K_X$ is numerically trivial.  For every line bundle $L$ on
$X$, the algorithm for $k=0$ gives
\begin{equation}\label{eq:ch0-formula}
\left\langle\ch_0^L\right\rangle'
=\chi(X)T_2
=\frac{\chi(X)}{24}(1-E_2).
\end{equation}
\end{proposition}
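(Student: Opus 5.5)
Since $k=0$, the GRR decomposition \eqref{eq:GRR-decomposition} has only the branch $\varepsilon=0$, so $\ch_0(L^{[n]})=G_0(\1,n)$ and $\langle\ch_0^L\rangle'=\widehat F^{\1}_{0}$. By Step~\ref{alg:F-current-data}, $\mathcal R(0,\1)$ consists only of the datum $A=(2,\1,\mathbf 0,-1)$; the $B_h$ and $C$ data are absent because $j=0$. The computation therefore involves a single vertex with two labeled half-edges, both of derivative order $0$, carrying the tensor $\tau_{2*}\1$ with coefficient $c/r!=-1/2$.

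There are two partitions of the two half-edges. The first is two singletons: each singleton contributes $\int_X$ of a Künneth factor of $\tau_{2*}\1$. Using the even classes and \eqref{eq:algorithm-cohomological-contribution}, there are two components, each with $g_C=0$ and class $\1$, so the contribution is $(\int_X\1)^2=0$ by the degree condition \eqref{eq:algorithm-component-degree}. The second is one loop: this gives a single component with $|E|=1$, $|V|=1$, hence $g_C=1$, and $\mathcal I_\Gamma=\int_X\eX=\chi(X)$, which agrees with \Cref{lem:diagonal-self-intersection}. Its scalar factor is the loop kernel $-2T_2(q)$, which carries no $x$-dependence, so $\CT_\Gamma=-2T_2$.

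By \eqref{eq:algorithm-contribution}, the total is $(-1/2)\cdot\chi(X)\cdot(-2T_2)=\chi(X)T_2$. The second equality in \eqref{eq:ch0-formula} then follows from \eqref{eq:T2-OkZ2} (equivalently from \eqref{eq:loop-series-Eisenstein} with $B_2=1/6$).

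The only delicate point is bookkeeping. One must confirm that the labeled loop is counted once, with the factor $2$ from the two mode orientations already absorbed into $L_{ab}$ as in \eqref{eq:loop-kernel}, and with no extra automorphism factor. One must also check that the sign from $\eps(\Pi)$ is trivial, which holds because all classes are even. As a sanity check, the result can be compared directly with $\GG_0(\1)=-\sum_{m>0}(\Nak_{-m}\Nak_m)(\1)$ from \eqref{eq:G0-explicit}, using \eqref{eq:covariance-minusplus} together with $(\ ,\ )$ contracted on $\tau_{2*}\1$ giving $\chi(X)$. This yields $\sum_m mq^m/(1-q^m)\cdot\chi(X)=\chi(X)T_2$.
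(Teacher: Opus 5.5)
Your proposal is correct and follows the paper's proof essentially verbatim. The common steps are the single GRR branch $(0,\1)$ with datum $(2,\1,(0,0),-1)$, the vanishing two-singleton graph, and the labeled loop with $\mathcal I_\Gamma=\chi(X)$ and kernel $-2T_2$, giving $(-\tfrac12)\chi(X)(-2T_2)=\chi(X)T_2$.

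One small slip: in \Cref{cor:small-diagonal-cohomological-contribution} the vertices of $\overline\Gamma$ are the currents, not the half-edges. So the two-singleton graph has a single component with $g_C=0$ and contributes $\int_X\1=0$, not $(\int_X\1)^2$. This does not affect the result, since both values are zero.
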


\begin{proof}
\Cref{alg:T-GRR,alg:F-current-data} select the single GRR pair
$(0,\1)$ and the current datum $(2,\1,(0,0),-1)$, with coefficient
$-1/2!$.  In \Cref{alg:F-graphs}, the two-singleton graph contributes
$\int_X\1=0$, while the loop contributes $\int_X\eX=\chi(X)$.
Its scalar factor in \Cref{alg:F-kernels} is $-2T_2$, independent of
the current variable, so \Cref{alg:F-sum} gives
\[
\left\langle\ch_0^L\right\rangle'
=\left(-\frac1{2!}\right)
\bigl(0+\chi(X)(-2T_2)\bigr)=\chi(X)T_2.
\]
Equation \eqref{eq:T2-OkZ2} gives the Eisenstein expression.
Thus the known rank series is recovered with the coefficient $1/2!$
from the current and the factor $-2T_2$ from its labeled loop, with
no additional division by two.
\end{proof}

\subsection{The one-point series $\langle\ch_2^L\rangle'$}

\begin{proposition}\label[proposition]{prop:ch2-check}
Assume that $K_X$ is numerically trivial.  For every line bundle $L$ on $X$,
\begin{align}
\left\langle\ch_2^L\right\rangle'
={}&\frac{L^2}{2}T_2
+\chi(X)\left(
-T_2^2-\frac1{12}\Dq T_2+\frac1{12}T_2
\right)
\label{eq:ch2-raw}\\
={}&\frac12\OZ{2}\,L^2
-\left(
\frac56\OZ{2}^2+\frac5{12}\OZ{4}
\right)\chi(X)
\label{eq:ch2-OkZ}\\
={}&\frac{1-E_2}{48}L^2
+\frac{6-5E_2^2-E_4}{3456}\chi(X).
\label{eq:ch2-Eisenstein}
\end{align}
\end{proposition}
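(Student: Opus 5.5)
The plan is to run the finite algorithm of \Cref{sec:algorithm} with $N=1$ and $k_1=2$. This produces the raw expression \eqref{eq:ch2-raw}. I will then convert it to \eqref{eq:ch2-OkZ} and \eqref{eq:ch2-Eisenstein} using \eqref{eq:T2-OkZ2} and \Cref{lem:DZ2-correct}.

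By \Cref{alg:T-GRR} there are three GRR branches, $(2,\1)$, $(1,\ell)$ and $(0,\ell^2/2)$, with $\ell=c_1(L)$. Since there is only one vertex, every Wick graph consists of singletons and loops. Each scalar factor $D^d\zhat(x/y)$ depends on $x$, and the constant term is taken in $x$ alone. I will use the degree test \eqref{eq:algorithm-component-degree} and the vanishing $\eX\ell=\eX\ell^2=0$ to discard most graphs.

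\emph{Branch $(0,\ell^2/2)$.} Only the datum $A=(2,\ell^2/2,\mathbf0,-1)$ occurs, with coefficient $-1/2$.
\begin{itemize}
\item The graph with two singletons has $g_C=0$ and $\mathcal I_\Gamma=\int_X\ell^2/2=L^2/2$. Its scalar constant term is $-2T_2$ by \eqref{eq:CT-Zhat2}.
\item The loop graph gives $\int_X\eX\ell^2/2=0$.
\end{itemize}
The branch therefore contributes $\frac{L^2}{2}T_2$.

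\emph{Branch $(1,\ell)$.} The $B$ and $C$ data carry the class $\eX\ell=0$ and are discarded. The datum $A=(3,\ell,\mathbf0,-1)$ fails \eqref{eq:algorithm-component-degree}, since $4g_C+2=4$ has no integer solution. The branch contributes $0$.

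\emph{Branch $(2,\1)$.} Three kinds of data occur.
\begin{itemize}
\item Datum $A=(4,\1,\mathbf0,-1)$, with coefficient $-1/24$. The degree test forces exactly one loop and two singletons; there are $\binom42=6$ labeled such graphs. Each has $\mathcal I_\Gamma=\int_X\eX=\chi(X)$. Its scalar factor is $(-2T_2)\cdot[x^0]\zhat(x/y)^2=4T_2^2$. Together these give $-\chi(X)T_2^2$.
\item Data $B_h=(2,\eX,2\mathbf e_h,1/24)$ for $h=1,2$, each with coefficient $1/48$. Only the two-singleton graph survives, with $\mathcal I_\Gamma=\chi(X)$. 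Its scalar factor is $-2\Dq T_2$ by \eqref{eq:CT-Zhat-D2Zhat}. Summing over $h$ gives $-\frac1{12}\chi(X)\Dq T_2$.
\item Datum $C=(2,\eX,\mathbf0,-1/12)$, with coefficient $-1/24$. Again only the two-singleton graph survives, and it gives $\frac1{12}\chi(X)T_2$.
\end{itemize}

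Summing the three branches yields \eqref{eq:ch2-raw}. For \eqref{eq:ch2-OkZ}, I substitute $T_2=\OZ2$ and $\Dq T_2=5\OZ4-2\OZ2^2+\OZ2$ from \Cref{lem:DZ2-correct}; the $\OZ2$ terms cancel. For \eqref{eq:ch2-Eisenstein}, I use \eqref{eq:T2-OkZ2} and \eqref{eq:DZ2-Eisenstein}, and expand.

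I expect the main obstacle to be the bookkeeping of multiplicities: the factor $1/r!$, the six labeled loops, and the two $B_h$ summands. Getting these right is exactly what \Cref{prop:ch0-check} calibrates. The remaining risk is the routine algebra in the final Eisenstein conversion.
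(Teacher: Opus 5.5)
Your proposal is correct and follows the paper's proof essentially step for step. It uses the same three GRR branches and the same degree-test eliminations, with $\eX\ell=\eX\ell^2=0$ killing the correction data. The labeled multiplicities and coefficients $c/r!$ also agree: six loops for $A$ at $-1/4!$, $2/(24\cdot 2!)$ for the two $B_h$, and $-1/(12\cdot 2!)$ for $C$. You then take the constant terms from \Cref{lem:elementary-CTs} and substitute \eqref{eq:T2-OkZ2} and \Cref{lem:DZ2-correct}, exactly as the paper does.
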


\begin{proof}
Write $\ell=c_1(L)$.  The GRR branches in \Cref{alg:T-GRR} are
\[
(2,\1),\qquad (1,\ell),\qquad
\left(0,\frac{\ell^2}{2}\right).
\]
For a current with $r$ fields and cohomology class $\gamma$, a graph
with $g$ loops has cohomological contribution $\int_X\eX^g\gamma$.
It can therefore survive only when $\deg\gamma+4g=4$.  This condition identifies the possible numbers of loops and singletons
before we expand the scalar kernels in powers of $x$ and extract their
constant terms.

For $(2,\1)$, \Cref{alg:F-current-data} gives $A$ with $r=4$,
$B_1,B_2$ with $r=2$ and derivative orders $(2,0)$ and $(0,2)$,
and $C$ with $r=2$ and derivative orders $(0,0)$.  For $A$, the degree condition requires one loop and two
singletons.  There are $\binom42=6$ choices of the labeled loop, each
with cohomological contribution $\chi(X)$ and scalar constant term
\[
[x^0]\zhat(x/y)^2(-2T_2)=4T_2^2.
\]
The coefficient $c/r!$ is $-1/4!$, so their total is
$-\chi(X)T_2^2$.  The four-singleton graph contains $\int_X\1$,
and the two-loop graphs contain $\int_X\eX^2$; both vanish.

The $B_h$ and $C$ choices have cohomology class $\gamma=\eX$.  Their only surviving
partition consists of two singletons, because an additional loop would
insert another Euler class.  Including the current coefficients and
labeled multiplicities, \Cref{lem:elementary-CTs} gives
\[
\begin{array}{c|c|c}
\text{data}&\text{coefficient including multiplicity}
 &\text{total contribution}\\ \hline
B_1,B_2&2/(24\cdot2!)&-\chi(X)\Dq T_2/12\\
C&-1/(12\cdot2!)&\chi(X)T_2/12.
\end{array}
\]
Consequently the sum in \Cref{alg:F-sum} is
\begin{equation}\label{eq:G2-one-check}
\Omega_{q,y}(\GG_2(\1))
=\chi(X)\left(
-T_2^2-\frac1{12}\Dq T_2+\frac1{12}T_2
\right).
\end{equation}

For $(1,\ell)$, the Euler-class correction data vanish because
$\eX\ell=0$.  The remaining choice has $r=3$ and either no loop or
one loop, giving $\int_X\ell$ or $\int_X\eX\ell$, respectively.
Both are zero, and hence
\begin{equation}\label{eq:G1ell-zero}
\Omega_{q,y}(\GG_1(\ell))=0.
\end{equation}
For $(0,\ell^2/2)$, the degree condition instead requires no loop.
The two-singleton graph has contribution $L^2/2$ and constant term
$-2T_2$, so
\begin{equation}\label{eq:G0ell2-check}
\Omega_{q,y}\!\left(\GG_0\!\left(\frac{\ell^2}{2}\right)\right)
=\left(-\frac1{2!}\right)\frac{L^2}{2}(-2T_2)
=\frac{L^2}{2}T_2.
\end{equation}
Adding the three branches as in \Cref{alg:T-sum} proves
\eqref{eq:ch2-raw}.  Substitution of \Cref{lem:DZ2-correct} and
\eqref{eq:T2-OkZ2} gives \eqref{eq:ch2-OkZ} and
\eqref{eq:ch2-Eisenstein}.
\end{proof}

\subsection{The two-point series $\langle\ch_1^{L_1}\ch_1^{L_2}\rangle'$}

\begin{proposition}\label[proposition]{prop:ch1ch1-check}
Assume that $K_X$ is numerically trivial.  For line bundles $L_1,L_2$ on $X$,
\begin{equation}\label{eq:ch1ch1-raw}
\left\langle
\ch_1^{L_1}\ch_1^{L_2}
\right\rangle'
=(L_1\cdot L_2)\Dq\OZ{2}+\chi(X)\mathcal H(q),
\end{equation}
where $\mathcal H$ is evaluated in \Cref{lem:elementary-CTs}.
Consequently,
\begin{align}
\left\langle
\ch_1^{L_1}\ch_1^{L_2}
\right\rangle'
={}&(L_1\cdot L_2)
\bigl(5\OZ{4}-2\OZ{2}^2+\OZ{2}\bigr)
\notag\\*
&+\chi(X)\bigl(
-\frac54\OZ{2}^2-\frac54\OZ{4}
+\frac{10}{3}\OZ{2}^3
-5\OZ{2}\OZ{4}
-\frac{35}{6}\OZ{6}
\bigr).
\label{eq:ch1ch1-polynomial}
\end{align}
\end{proposition}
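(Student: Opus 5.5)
We run the algorithm of \Cref{sec:algorithm} with $N=2$, $k_1=k_2=1$. Write $\ell_i=c_1(L_i)$. By \Cref{alg:T-GRR}, the GRR branches at vertex $i$ are $(1,\1)$ and $(0,\ell_i)$. This gives four branch choices, and we evaluate each $\widehat F$ by \Cref{alg:F-current-data}--\Cref{alg:F-sum}.

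\emph{Branch $(0,\ell_1),(0,\ell_2)$.} Each vertex carries the datum $(2,\ell_i,\mathbf 0,-1)$, with coefficient $-1/2$. By the degree test \eqref{eq:algorithm-component-degree}, the only surviving graphs are those in which each component has total degree $4$.
\begin{itemize}
\item A loop at a vertex inserts $\eX\ell_i=0$, so no graph with a loop survives.
\item A singleton contributes $\int_X\ell_i=0$, so no graph with a singleton survives.
\item Hence both half-edges at vertex $1$ must be joined to the two half-edges at vertex $2$. There are $2$ labeled matchings, each a component with $g_C=1$, giving $\int_X\eX\ell_1\ell_2=0$.
\item One edge plus two singletons gives $\int_X\ell_1\ell_2\cdot(\int_X\1)^{\cdot}=0$. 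The correct reading is that each singleton carries its own integral, so it is $\int_X\ell_1\ell_2$ only if there is no extra factor. I must recheck this: in a single intervertex edge with the remaining two half-edges as singletons, the component containing both vertices has $g_C=0$ and class $\ell_1\ell_2$. The singletons are integrations already absorbed in the component's integral via $\tau_*$, so the contribution is $\int_X\ell_1\ell_2=L_1\cdot L_2$.
\end{itemize}
The surviving term is therefore one edge plus two singletons, with multiplicity $2\cdot2=4$. Its scalar factor is $\zhat(x_1/y)\zhat(x_2/y)(-P(x_2/x_1))$, whose constant term is $+\Dq T_2$ by \eqref{eq:CT-ZZP}. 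The total is
\[
(1/4)\cdot 4\,(L_1\cdot L_2)\,\Dq T_2=(L_1\cdot L_2)\Dq\OZ{2}.
\]

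\emph{Mixed branches $(1,\1),(0,\ell_2)$ and its mirror.} At the $(1,\1)$ vertex, the $B,C$ data carry $\eX$. We combine these with the $\ell_2$ vertex and check degrees.
\begin{itemize}
\item $\eX\ell_2$ integrates to $0$, as does anything with a loop, so the $B,C$ data die.
\item The $A$ datum has $r=3$ with class $\1$. A component containing both vertices has degree $\deg\ell_2+4g_C=4$, which forces $g_C\ge1$, i.e.\ at least two intervertex edges. This leaves odd half-edges at vertex $1$ that form a singleton or a loop; both carry an extra $\int\1$ or $\eX$ factor, and these vanish by the degree test.
\end{itemize}
I expect these branches to contribute $0$, and would verify by listing the few graphs.

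\emph{Branch $(1,\1),(1,\1)$.} This branch should give $\chi(X)\mathcal H$. I would enumerate the selections $A=(3,\1,\mathbf 0,-1)$, $B_1=(1,\eX,(2),1/24)$, and $C=(1,\eX,(0),-1/12)$ at each vertex.
\begin{itemize}
\item $B$ and $C$ are each a single field with class $\eX$, so a component containing them already has degree $4$. Their half-edge must therefore be a singleton, and integrating it gives $\chi(X)$ times $D^d\zhat$, whose constant term is $0$. Hence any selection involving $B$ or $C$ vanishes.
\item $A\times A$: the six half-edges of class $\1$ must form components with $g_C=1$. In the connected case, two intervertex edges plus one singleton at each vertex give coefficient $1/36$. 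The multiplicity is $3\cdot3\cdot2=18$ labeled ways, with scalar $\zhat\zhat P^2$.
\end{itemize}
The $A\times A$ term yields $\tfrac{1}{36}\cdot18\cdot\chi(X)\cdot[\,\zhat\zhat P^2\,]=\chi(X)\mathcal H$ by \eqref{eq:H-definition}, since $(-1)^2=1$. The other candidate graphs in $A\times A$ vanish:
\begin{itemize}
\item a loop at each vertex plus one intervertex edge has $g_C=2$, so it gives $\int\eX^2=0$;
\item a loop at each vertex with singletons splits into two components, each giving $\int\1=0$;
\item a loop at one vertex and three edges are impossible by parity.
\end{itemize}

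The main obstacle is bookkeeping of signs and labeled multiplicities, especially the $18/36$ factor. As a check, I would compare the $q$-expansion against \eqref{eq:H-expansion}. Finally, I would substitute \Cref{lem:DZ2-correct} and \eqref{eq:H-polynomial} to obtain \eqref{eq:ch1ch1-polynomial}.
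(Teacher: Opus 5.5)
Your route is the paper's: four GRR branches, the degree test on components of $\overline\Gamma$, and the constant terms of \Cref{lem:elementary-CTs}. The two surviving contributions and the final formula are right. The gap is in the vanishing arguments for the remaining graphs.

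Several of those arguments treat a singleton as carrying its own factor $\int_X\1$ or $\int_X\ell_i$. You retract this view in the first branch but then reuse it. By \Cref{cor:small-diagonal-cohomological-contribution}, only components of $\overline\Gamma$ contribute factors, and singletons add none. Consequently:
\begin{enumerate}[label=(\alph*)]
\item In the $A\times A$ selection, the disconnected graph with one loop and one singleton at each vertex has $\mathcal I_\Gamma=\chi(X)^2$, not $0$.
\item You omit the connected graphs with one intervertex edge and one loop, where the loop exhausts one vertex and the other vertex has two singletons. These have $g_C=1$ and $\mathcal I_\Gamma=\chi(X)$. Your ``loop at one vertex and three edges'' is not the relevant case.
\item At a $B_1$ or $C$ vertex, the single half-edge need not be a singleton. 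Joining it to an $A$ vertex whose other two half-edges are singletons gives a tree with $\mathcal I_\Gamma=\int_X\eX=\chi(X)$.
\end{enumerate}
None of these graphs is removed by the degree test.

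What kills them is the constant-term mechanism, and you need to state it. In each of these graphs some variable $x_i$ occurs in exactly one scalar factor, a $D^d\zhat$ or a $D^dP$. These have no zero Fourier mode, so $[x_i^0]$ annihilates the graph. For example, $[x_1^0]\bigl((-2T_2)(-P(x_2/x_1))\zhat(x_2/y)^2\bigr)=0$ and $[x_1^0]\zhat(x_1/y)=0$.

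This is how the paper disposes of the loop-plus-edge and disconnected graphs. For $B_1$ and $C$ it argues earlier, at the operator level: a one-field current $\sum_{m\neq0}\Nak_m(\eX)x^{-m}$, or its $D_x^2$, has zero constant term, and constant terms in different variables factor.

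In the mixed branches, $\deg\ell_2+4g_C=4$ does not ``force $g_C\geq1$''; it has no integer solution. That alone kills every graph there, since the component containing the $\ell_i$ vertex has degree $\equiv2\pmod 4$, so no graph listing is needed.

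Finally, delete the bullet in your first branch asserting that no graph with a singleton survives. The reading you give right after it is the correct one.
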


\begin{proof}
Put $\ell_i=c_1(L_i)$.  The GRR reduction gives
\[
\ch_1(L_i^{[n]})=G_1(\1,n)+G_0(\ell_i,n).
\]
For $j=1$, the two correction terms in
\eqref{eq:algorithm-current-summary} contain one Heisenberg field.
Since $\tau_1$ is the identity, \eqref{eq:heisenberg-field} gives
\begin{align}
\Vcur_1(\eX;x)&=\sum_{m\ne0}\Nak_m(\eX)x^{-m},
\label{eq:length-one-current}\\
\Scur_1(\eX;x)&=D_x^2\Vcur_1(\eX;x)
=\sum_{m\ne0}m^2\Nak_m(\eX)x^{-m}.
\label{eq:length-one-derivative}
\end{align}
Consequently their combined constant term is
\begin{equation}\label{eq:length-one-correction-zero}
[x^0]\left(\frac1{24}\Scur_1(\eX;x)
-\frac1{12}\Vcur_1(\eX;x)\right)
=\sum_{m\ne0}\left(\frac{m^2}{24}-\frac1{12}\right)
\Nak_m(\eX)[x^0]x^{-m}=0.
\end{equation}
This also removes either correction from a product of currents: the
other current depends on a different variable, so, for any such
operator-valued series $A(x_2)$,
\[
[x_1^0x_2^0]\bigl(B(x_1)A(x_2)\bigr)
=([x_1^0]B(x_1))([x_2^0]A(x_2))=0
\]
when $B$ is either correction in \eqref{eq:length-one-correction-zero}.
The order of the operators is unchanged.  Thus
\Cref{lem:trace-CT-interchange} reduces the calculation to the choices
\[
(3,\1,(0,0,0),-1)
\quad\text{and}\quad
(2,\ell_i,(0,0),-1).
\]
For either mixed GRR branch choice, a connected component containing
$\ell_i$ contributes an integral of degree $2+4g$, which cannot equal
four.  This also rules out disconnected graphs, because the component
containing $\ell_i$ already vanishes.  Thus these branches are
zero.

For the current selection with $(r_1,r_2)=(2,2)$ and
$(\gamma_1,\gamma_2)=(\ell_1,\ell_2)$, any nonzero graph must be connected,
and its contribution $\int_X\eX^g\ell_1\ell_2$ requires $g=0$.
With two vertices this means exactly one intervertex edge and no loops;
the two remaining half-edges are singletons.  There are $2\cdot2=4$
labeled choices.  Multiplying by the coefficient
$\prod_i c_i/r_i!=1/(2!2!)$ gives $4/(2!2!)=1$.
Their cohomological contribution is $L_1\cdot L_2$, and their kernel
is $-\zhat(x_1/y)\zhat(x_2/y)P(x_2/x_1)$.  Therefore
\Cref{lem:elementary-CTs} gives
\begin{equation}\label{eq:V2V2-check}
(L_1\cdot L_2)\Dq T_2.
\end{equation}

For the current selection with $(r_1,r_2)=(3,3)$ and
$(\gamma_1,\gamma_2)=(\1,\1)$, let $m$ be the number of intervertex
edges and $l_i$ the number of loops at vertex $i$.  A connected graph
has $g=m+l_1+l_2-1$, so its cohomological contribution can be nonzero
only if $m+l_1+l_2=2$.  This leaves two possibilities.  If $m=2$,
then $l_1=l_2=0$, and one singleton remains at each vertex.  If $m=1$,
there is one loop, exhausting the half-edges at one vertex.  If the
loop is at vertex $1$, its scalar factor satisfies
\[
[x_1^0]\bigl((-2T_2)(-P(x_2/x_1))\zhat(x_2/y)^2\bigr)
=2T_2\zhat(x_2/y)^2[x_1^0]P(x_2/x_1)=0,
\]
because $P$ has no term of exponent zero.  The same argument applies
with the two vertices exchanged.  A disconnected graph can survive the degree
test only with one loop and one singleton at each vertex; its constant
term vanishes because it contains
$[x_1^0]\zhat(x_1/y)\,[x_2^0]\zhat(x_2/y)=0$.

Hence only the two-intervertex-edge graphs contribute.  Choosing the
singleton at each vertex and pairing the remaining half-edges gives
$3\cdot3\cdot2!=18$ labeled graphs.  Their combined coefficient is
$18\prod_i c_i/r_i!=18/(3!3!)=1/2$, their cohomological contribution is $\chi(X)$,
and the two negative edge signs cancel.  Thus
\begin{equation}\label{eq:V3V3-check}
\frac{\chi(X)}2[x_1^0x_2^0]
\zhat(x_1/y)\zhat(x_2/y)P(x_2/x_1)^2
=\chi(X)\mathcal H(q).
\end{equation}
Summing the four GRR branches proves \eqref{eq:ch1ch1-raw}. Finally, substitute \eqref{eq:H-polynomial} from
\Cref{lem:elementary-CTs} and \eqref{eq:DZ2-correct} from
\Cref{lem:DZ2-correct} into \eqref{eq:ch1ch1-raw} to obtain
\eqref{eq:ch1ch1-polynomial}.
\end{proof}

The coefficients of $q$ have a direct geometric check.  Since
$X^{[1]}=X$ and $L^{[1]}=L$, the defining integrals are
$\int_X\ch_2(L)=L^2/2$ and
$\int_X\ch_1(L_1)\ch_1(L_2)=L_1\cdot L_2$.
Normalization does not change these coefficients, because both
insertion series have zero constant term.  This agrees with
\eqref{eq:ch2-raw} and \eqref{eq:ch1ch1-raw}, using
$T_2=q+O(q^2)$, $\Dq T_2=q+O(q^2)$, and
$\mathcal H=O(q^2)$.

\begin{remark}[Comparison with two formulas in the literature]\label[remark]{rem:printed-formulas}
The low-degree calculations in \Cref{prop:ch2-check,prop:ch1ch1-check} permit two precise comparisons with
~\cite{AlhwaimelQin2024,Alhwaimel2025}.  Both discrepancies are confined to
the identities compared in \eqref{eq:literature-DZ2} and \eqref{eq:literature-P}.
The comparisons below concern the preprint versions of these papers
and use their equation numbering.

First, ~\cite[Equation~(5.12)]{AlhwaimelQin2024} uses
\begin{equation}\label{eq:literature-DZ2}
\Dq\OZ{2}
=3\OZ{4}-\OZ{2,2}+\OZ{2}
=\frac72\OZ{4}-\frac12\OZ{2}^2+\OZ{2}.
\end{equation}
With the definitions of the Okounkov series used there, however, the relations \eqref{eq:Ok-Eisenstein-relations} and Ramanujan's identity \eqref{eq:Ramanujan-E2} give
\begin{equation}\label{eq:literature-DZ2-correction}
\Dq\OZ{2}
=5\OZ{4}-2\OZ{2}^2+\OZ{2},
\end{equation}
which is \eqref{eq:DZ2-correct}.  A direct \(q^3\)-check gives
\(3\sigma_1(3)=12\) on the left and \(15\) on the right.  Thus the first
equality in \eqref{eq:literature-DZ2} is the source of the discrepancy; the
conversion using \eqref{eq:Ok-product22} is not.

For numerically trivial \(K_X\), the formula in
~\cite[Theorem~6.2]{AlhwaimelQin2024} specializes to
\begin{equation}\label{eq:literature-ch2-specialization}
\frac12\OZ{2}L^2
-\left(\frac{23}{24}\OZ{2}^2+\frac7{24}\OZ{4}\right)\chi(X).
\end{equation}
Our calculation in \Cref{prop:ch2-check} gives \eqref{eq:ch2-OkZ}.  The difference between
\eqref{eq:ch2-OkZ} and \eqref{eq:literature-ch2-specialization} is
\[
\frac18\bigl(\OZ{2}^2-\OZ{4}\bigr)\chi(X).
\]
This is exactly the correction obtained by replacing
\eqref{eq:literature-DZ2} with
\eqref{eq:literature-DZ2-correction} in the derivative term.  This explains
both why the formulas differ and why the formula in
\Cref{prop:ch2-check} has the stated coefficients.

Second, ~\cite[Definition~5.6, Equation~(5.22)]{Alhwaimel2025}
defines $h_{1,1}^{(2)}=-S_1-\frac12S_2$, where
\begin{align*}
S_1(q)&=\sum_{i,j>0}
\frac{j(i+j)(q^{i+j}+q^{2i+j})}
{(1-q^i)^2(1-q^j)(1-q^{i+j})},\\
S_2(q)&=\sum_{i,j>0}
\frac{ij(q^{i+j}+q^{2i+2j})}
{(1-q^i)(1-q^j)(1-q^{i+j})^2}.
\end{align*}
To identify these series with our constant term, substitute
\eqref{eq:H-convolution} into \eqref{eq:H-positive-modes}.
In the finite convolution set $r=i+j$ and $a=i$; its contribution
is $-S_2/2$.  In the second convolution set $r=i$ and $a=j$;
its contribution is $-S_1$.  Thus
\begin{equation}\label{eq:H-literature-identification}
\mathcal H(q)=-S_1(q)-\frac12S_2(q)=h_{1,1}^{(2)}(q).
\end{equation}
These reindexings are valid in $\Q[[q]]$, since every resulting
summand has $q$-order at least $i+j$ and hence each coefficient
receives contributions from only finitely many pairs.
The terms with $i=j=1$ consequently give
\begin{equation}\label{eq:literature-h-coefficient}
[q^2]h_{1,1}^{(2)}=-2-\frac12=-\frac52.
\end{equation}
On the other hand, for $P$ in ~\cite[Equation~(5.32)]{Alhwaimel2025}:
\begin{equation}\label{eq:literature-P}
P(q)=
\frac54\OZ{2}^2+\frac54\OZ{4}
-\frac{10}{3}\OZ{2}^3+5\OZ{2}\OZ{4}
+\frac{35}{6}\OZ{6},
\end{equation}
whose $q^2$-coefficient is $5/2$, in contrast with
\eqref{eq:literature-h-coefficient}.  More precisely, the exact
identification \eqref{eq:H-literature-identification} and the
reconstruction \eqref{eq:H-polynomial} show that
\[
h_{1,1}^{(2)}(q)=\mathcal H(q)=-P(q).
\]
Thus the polynomial conversion in \eqref{eq:literature-P} has the
opposite sign from the defining double series.  The coefficient at
$q^2$ detects the discrepancy, while the convolution and invertible
system \eqref{eq:H-reconstruction-matrix} establish the complete
series identity.
\end{remark}

\section*{Acknowledgments}

This project began when the authors asked GPT-5.6 Sol to prove Qin's conjecture.  The model proposed a promising proof strategy.  Most of the formal arguments in this paper were initially generated by GPT-5.6 Sol.  All AI-assisted parts were independently verified and revised by the authors.  The authors take full responsibility for all mathematical claims and any remaining errors.  The authors thank Angela Gibney, Daniel Krashen and Zhenbo Qin for helpful comments on an early draft of the paper.

\bibliographystyle{alpha}
\bibliography{references}

@article {Alhwaimel2025,
  AUTHOR = {Alhwaimel, Mazen M.},
  TITLE = {Toward {Q}in's conjecture on {H}ilbert schemes of points and quasi-modular forms},
  JOURNAL = {Manuscripta Math.},
  VOLUME = {176},
  YEAR = {2025},
  PAGES = {31},
  DOI = {10.1007/s00229-025-01630-1},
  EPRINT = {2407.01554},
  ARCHIVEPREFIX = {arXiv},
  PRIMARYCLASS = {math.AG}
}

@article {AlhwaimelQin2024,
  AUTHOR = {Alhwaimel, Mazen M. and Qin, Zhenbo},
  TITLE = {Hilbert schemes of points on surfaces and multiple {$q$}-zeta values},
  JOURNAL = {Pure Appl. Math. Q.},
  VOLUME = {20},
  YEAR = {2024},
  NUMBER = {6},
  PAGES = {2615--2646},
  EPRINT = {2310.10812},
  ARCHIVEPREFIX = {arXiv},
  PRIMARYCLASS = {math.AG}
}

@article {Carlsson2012,
  AUTHOR = {Carlsson, Erik},
  TITLE = {Vertex operators and quasimodularity of {C}hern numbers on the {H}ilbert scheme},
  JOURNAL = {Adv. Math.},
  VOLUME = {229},
  YEAR = {2012},
  NUMBER = {5},
  PAGES = {2888--2907},
  DOI = {10.1016/j.aim.2012.01.005}
}

@article {CarlssonOkounkov2012,
  AUTHOR = {Carlsson, Erik and Okounkov, Andrei},
  TITLE = {Exts and vertex operators},
  JOURNAL = {Duke Math. J.},
  VOLUME = {161},
  YEAR = {2012},
  NUMBER = {9},
  PAGES = {1797--1815},
  DOI = {10.1215/00127094-1593380}
}

@article {Gottsche1990,
  AUTHOR = {G{\"o}ttsche, Lothar},
  TITLE = {The {B}etti numbers of the {H}ilbert scheme of points on a smooth projective surface},
  JOURNAL = {Math. Ann.},
  VOLUME = {286},
  YEAR = {1990},
  NUMBER = {1--3},
  PAGES = {193--207},
  DOI = {10.1007/BF01453572}
}

@article {GoujardMoller2020,
  AUTHOR = {Goujard, Elise and M{\"o}ller, Martin},
  TITLE = {Counting {F}eynman-like graphs: quasimodularity and {S}iegel--{V}eech weight},
  JOURNAL = {J. Eur. Math. Soc. (JEMS)},
  VOLUME = {22},
  YEAR = {2020},
  NUMBER = {2},
  PAGES = {365--412},
  DOI = {10.4171/JEMS/924}
}

@article {Grojnowski1996,
  AUTHOR = {Grojnowski, Ian},
  TITLE = {Instantons and affine algebras. {I}. The {H}ilbert scheme and vertex operators},
  JOURNAL = {Math. Res. Lett.},
  VOLUME = {3},
  YEAR = {1996},
  NUMBER = {2},
  PAGES = {275--291},
  DOI = {10.4310/MRL.1996.v3.n2.a12}
}

@incollection {KanekoZagier1995,
  AUTHOR = {Kaneko, Masanobu and Zagier, Don},
  TITLE = {A generalized {J}acobi theta function and quasimodular forms},
  BOOKTITLE = {The moduli space of curves},
  SERIES = {Progr. Math.},
  VOLUME = {129},
  PUBLISHER = {Birkh{\"a}user Boston},
  ADDRESS = {Boston, MA},
  YEAR = {1995},
  PAGES = {165--172}
}

@article {LiQinWang2002a,
  AUTHOR = {Li, Wei-Ping and Qin, Zhenbo and Wang, Weiqiang},
  TITLE = {Vertex algebras and the cohomology ring structure of {H}ilbert schemes of points on surfaces},
  JOURNAL = {Math. Ann.},
  VOLUME = {324},
  YEAR = {2002},
  NUMBER = {1},
  PAGES = {105--133},
  DOI = {10.1007/s002080200330}
}

@article {LiQinWang2002b,
  AUTHOR = {Li, Wei-Ping and Qin, Zhenbo and Wang, Weiqiang},
  TITLE = {Hilbert schemes and {$W$} algebras},
  JOURNAL = {Int. Math. Res. Not. IMRN},
  YEAR = {2002},
  NUMBER = {27},
  PAGES = {1427--1456},
  DOI = {10.1155/S1073792802110129}
}

@article {Nakajima1997,
  AUTHOR = {Nakajima, Hiraku},
  TITLE = {Heisenberg algebra and {H}ilbert schemes of points on projective surfaces},
  JOURNAL = {Ann. of Math. (2)},
  VOLUME = {145},
  YEAR = {1997},
  NUMBER = {2},
  PAGES = {379--388},
  DOI = {10.2307/2951818}
}

@article {Okounkov2014,
  AUTHOR = {Okounkov, Andrei},
  TITLE = {Hilbert schemes and multiple {$q$}-zeta values},
  JOURNAL = {Funct. Anal. Appl.},
  VOLUME = {48},
  YEAR = {2014},
  NUMBER = {2},
  PAGES = {138--144},
  DOI = {10.1007/s10688-014-0054-z}
}

@book {Qin2018,
  AUTHOR = {Qin, Zhenbo},
  TITLE = {Hilbert schemes of points and infinite dimensional {L}ie algebras},
  SERIES = {Mathematical Surveys and Monographs},
  VOLUME = {228},
  PUBLISHER = {American Mathematical Society},
  ADDRESS = {Providence, RI},
  YEAR = {2018},
  DOI = {10.1090/surv/228}
}

@article {QinYu2018,
  AUTHOR = {Qin, Zhenbo and Yu, Fei},
  TITLE = {On {O}kounkov's conjecture connecting {H}ilbert schemes of points and multiple {$q$}-zeta values},
  JOURNAL = {Int. Math. Res. Not. IMRN},
  YEAR = {2018},
  NUMBER = {2},
  PAGES = {321--361},
  DOI = {10.1093/imrn/rnw244}
}

@article {ShenQin2020,
  AUTHOR = {Shen, Zhongyan and Qin, Zhenbo},
  TITLE = {Hilbert schemes of points and quasi-modularity},
  JOURNAL = {Pure Appl. Math. Q.},
  VOLUME = {16},
  YEAR = {2020},
  NUMBER = {5},
  PAGES = {1673--1706},
  DOI = {10.4310/PAMQ.2020.v16.n5.a11}
}

\end{document}